\definecolor{refkey}{gray}{.5}   % graylevel for refs
\definecolor{labelkey}{gray}{.5} % graylevel for labels
\definecolor{Red}{rgb}{1,0,0}
\newcommand{\ra}{\rightarrow}		
\newcommand{\by}[1]{\stackrel{#1}{\ra}}
\newcommand{\surj}{\ra\!\!\!\ra}	
\newcommand{\ol}{\overline}		\newcommand{\wt}{\widetilde}
\newcommand{\iso}{\by \sim}
\newtheorem{theorem}{Theorem}[section]
\newtheorem{proposition}[theorem]{Proposition}
\newtheorem{lemma}[theorem]{Lemma}
\newtheorem{corollary}[theorem]{Corollary}
\newtheorem{question}[theorem]{Question}
\theoremstyle{definition}
\newtheorem{remark}[theorem]{Remark}
\newtheorem{definition}[theorem]{Definition}
\newcommand{\gv}{\varnothing}
\newcommand{\BC}{\mbox{$\mathbb C$}}	
	\newcommand{\BF}{\mbox{$\mathbb F$}}
	\newcommand{\CR}{\mbox{$\mathcal R$}}
\def\proof{\paragraph{{\bf Proof}}}
\title{On a  question of Moshe Roitman and Euler class of stably free module}
\author{Manoj K. Keshari and Soumi Tikader}
\newcommand{\Addresses}{{
  \bigskip
  \footnotesize

\textsc{Manoj K. Keshari, Department of Mathematics, IIT Bombay
            Mumbai 400076, INDIA}\par\nopagebreak
  \textit{E-mail:} Manoj K. Keshari \texttt{<keshari@math.iitb.ac.in>}

\medskip
  
 \textsc{Soumi Tikader, Department of Mathematics, IIT Bombay 
            Mumbai 400076, INDIA}\par\nopagebreak
  \textit{E-mail:}  Soumi Tikader \texttt{<tikadersoumi@gmail.com>}

  \medskip
 
  }}
\begin{document}
\maketitle
\subjclass 2020 Mathematics Subject Classification:{13C10, 13B25, 19A13}

 \keywords {Keywords:}~ {Projective modules, affine algebra, unimodular elements.}

 \begin{abstract}
Let $A$ be a ring of dimension $d$ containing an infinite field $k$, $T_1,\ldots,T_r$ be variables over $A$ and $P$ be a projective $A[T_1,\ldots,T_r]$-module of rank $n$. Assume one of the following conditions hold.
\begin{enumerate}
\item $2n\geq d+3$ and $P$ is extended from $A$.
\item $2n\geq d+2$, $A$ is an affine $\overline {\mathbb F}_p$-algebra and $P$ is extended from $A$.
\item $2n\geq d+3$ and singular locus of $Spec(A)$ is a closed set $V(\mathcal J)$ with ht $\mathcal J\geq d-n+2$.
\end{enumerate}
Assume $Um(P_f)\neq \gv$ for some monic polynomial $f(T_r)\in A[T_1,\ldots,T_r]$. Then $Um(P)\neq \gv$ (see \ref{32}).

\end{abstract}

\section{Introduction}

{\it All rings are commutative noetherian with unity and all projective modules are finitely generated of constant rank.}

Let $A$ be a ring of dimension $d$ and $P$ be a projective $A[T]$-module of rank $n$. We say that $p\in P$ is a unimodular element if there exist $\phi\in Hom(P,A)$ such that $\phi(p)=1$. We write $Um(P)$ for the set of unimodular elements of $P$. When $n>d$, then Plumstead \cite{Pl} proved that $Um(P)\neq \gv$. Further, there are well known examples in the case $n\leq d$ with  $Um(P) = \gv$.  For example, let $d=2m$, $R=\mathbb R[X_0,\ldots,X_{d}]/(X_0^2+\ldots+X_{d}^2-1)$, $\phi:R^{d+1}\surj R$ defined by $e_i\mapsto X_i$
 and $Q=ker (\phi)$. Then $Q$ is stably free $R$-module of rank $d$ and $Um(Q)=\gv$. Hence if $P=Q[T]$, then $Um(P)=\gv$. Thus we need some further conditions in the case $n\leq d$ to get $Um(P)\neq \gv$.

Let $f\in A[T]$ be a monic polynomial and $P_f=P\otimes A[T]_f$. 
Roitman \cite[Lemma 10]{R} proved that if $A$ is a local ring, then $Um(P_f)\neq \gv$ implies $Um(P)\neq \gv$.
He asked whether this result holds for arbitrary ring $A$.  

\begin{question}[Roitman's question]
Let $A$ be a ring of dimension $d$ and $P$ be a projective $A[T]$-module of rank $n\leq d$. 
Let $f\in A[T]$ be a monic polynomial such that $Um(P_f)\neq \gv$. Does this imply $Um(P)\neq \gv$?
\end{question}

If $A$ contains an infinite field $k$, then
an affirmative answer is given in the following cases. 
\begin{enumerate}
\item Bhatwadekar \cite[Proposition 3.3]{B} for $n=2$, arbitrary $d$ and $A$ need not contain any field.

\item Bhatwadekar-Sridharan \cite[Theorem 3.4]{BR} for $n=d$.

 \item Bhatwadekar-Keshari \cite[Theorem 5.3]{BK} for $2n\geq d+3$ when $P$ is extended from $A$. 

\item Bhatwadekar-Keshari \cite[Corollary 5.4]{BK} for $2n\geq d+3$ when $A$ is regular.
\end{enumerate}

%In this paper we exmine the Roitman's question for the ring which may not be regular but with isolated singularity.

We will follow the proof of Bhatwadekar-Keshari \cite[Theorem 5.3]{BK} with suitable modification and prove the  following generalisation of $(4)$ (see \ref{main}).

\begin{theorem}\label{roitman}
Let $A$ be a ring of dimension $d$ containing an infinite field $k$ and $P$ be a projective $A[T]$-module of rank $n$ with $2n\geq d+3$. Assume the singular locus of Spec$(A)$ is a closed set $V(\mathcal J)$ with ht$(\mathcal J)\geq d-n+2$. Let $f\in A[T]$ be a monic polynomial such that $Um(P_f)\neq \gv$. Then $Um(P)\neq \gv$.
\end{theorem}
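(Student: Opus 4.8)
The plan is to first convert the hypothesis $Um(P_f)\neq\gv$ into a concrete statement about an order ideal, then to localise so that the already-known regular case of the theorem applies, and finally to patch the resulting local solutions together. So first I would clear denominators. A unimodular element of $P_f$ corresponds to $\bar p\in P_f$ and $\bar\phi\in\Hom(P_f,A[T]_f)$ with $\bar\phi(\bar p)=1$; since $P$ is finitely generated projective, these come from $p\in P$ and $\phi\in\Hom(P,A[T])$, and because $f$ is monic (hence a non-zero-divisor in $A[T]$) the relation tidies up to $\phi(p)=f^{N}$ for some $N\geq 1$. Writing $O_P(p)=\{\psi(p):\psi\in\Hom(P,A[T])\}$ for the order ideal of $p$, this says $f^{N}\in O_P(p)$, so the locus $V(O_P(p))$ on which $p$ fails to be unimodular sits inside $V(f)$. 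As $f$ is monic, $A[T]/(f)$ is finite over $A$, so $V(f)\to\Spec(A)$ is finite and $\dim V(O_P(p))\leq d$. The goal is now to modify $p$ into a genuinely unimodular element of $P$.

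The second step exploits the singular-locus hypothesis. Since $\hh(\mathcal{J})\geq d-n+2$ we have $\dim(A/\mathcal{J})\leq n-2$, so $\dim\big((A/\mathcal{J})[T]\big)\leq n-1<n=\operatorname{rank}P$, and Serre's splitting theorem already supplies a unimodular element (indeed a free summand) of $P\otimes_{A[T]}(A/\mathcal{J})[T]$ over the singular locus. On the other hand, for any $s\in\mathcal{J}$ the ring $A_s$ is regular: $V(\mathcal{J})$ is exactly the singular locus and $D(s)\cap V(\mathcal{J})=\gh$ whenever $s\in\mathcal{J}$. Over such $A_s$ the module $P_s$ is extended from $A_s$ (Lindel's theorem), and since $(P_s)_f=(P_f)_s$ has a unimodular element and $2n\geq d+3\geq\dim A_s+3$, the regular case \cite[Theorem 5.3 and Corollary 5.4]{BK} gives $Um(P_s)\neq\gv$. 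Thus $P$ carries a unimodular element on the regular locus and, separately, over the low-dimensional singular locus.

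The final and hardest step is to glue these into a single unimodular element of $P$ over $A[T]$. Here I would invoke the addition and subtraction (moving) principles for unimodular elements valid in the range $2n\geq d+3$, as developed by Bhatwadekar--Sridharan and used in \cite{BK}: the elementary (and symplectic) transvection group acts transitively enough on unimodular elements of $A[T]\oplus P$ to reconcile the local solution on $D(s)$ with the one along $V(\mathcal{J})$ on their overlap, after a generic elementary correction. The infinite field $k$ enters precisely at this point, to choose both $s\in\mathcal{J}$ and the correcting transvections in sufficiently general position so that the order ideal of the patched element becomes all of $A[T]$; the bound $\hh(\mathcal{J})\geq d-n+2$ guarantees that the only place where the regular-locus element can fail is absorbed into the singular locus, where Serre's theorem has already produced a competing unimodular element.

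The main obstacle I anticipate is exactly this patching: controlling the order ideals of the two local unimodular elements so that they agree up to an elementary automorphism on the overlap, and checking that the generic correction does not reintroduce a bad locus of dimension $\geq n$. This is where $2n\geq d+3$ is indispensable, since it is what makes the transvection action transitive, following the mechanism of \cite[Theorem 5.3]{BK}, and where the monicity of $f$ is used a second time, to keep every intermediate bad locus finite over $\Spec(A)$ and hence of dimension at most $d$.
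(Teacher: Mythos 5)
Your opening reduction is fine as far as it goes, and your identification of the two regimes (regular locus versus the low-dimensional singular locus) matches the structure of the actual argument. But there is a genuine gap at exactly the point you flag as the ``hardest step'': you never supply the patching mechanism, and the one you gesture at would not work. Having $Um(P_s)\neq\gv$ for every $s\in\mathcal J$ together with a unimodular element of $P/\mathcal JP$ is not a situation to which any transitivity statement for transvections on $Um(A[T]\oplus P)$ applies: there is no local-global principle that assembles unimodular elements given on the open sets $D(s)$, $s\in\mathcal J$, and on the closed set $V(\mathcal J)$ into one over $A[T]$, and in the range $n\leq d$ the transvection group does not act transitively on $Um(A[T]\oplus P)$, so a ``generic elementary correction'' cannot be expected to repair the order ideal.

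The paper's proof of (\ref{main}) replaces this with two specific devices that are missing from your outline. First, instead of only recording $f^N\in O_P(p)$, it applies Lemma \ref{3.1} to produce a surjection $\Phi:P\surj I$ with $I\subset A[T]$ of height $n$ containing a monic polynomial; the ideal $J=(I\cap A)\cap\mathcal J$ is what controls where unimodularity can fail, and localizing at $1+J$ (not at single elements $s\in\mathcal J$) puts $J$ into the Jacobson radical, after which the Keshari--Zinna criterion (\ref{KZ}) yields $Um(P_{1+J})\neq\gv$. Second, the globalization is a Quillen-type patching over the two-element cover $D(a)\cup D(1+a)=\Spec(A)$ for a single well-chosen $a\in J$: on $D(a)$ the ring is regular, so Popescu \cite{P} (not Lindel, which is the wrong citation here) gives extendedness, and the lifted surjection with monic last coordinate supplied by Lemma \ref{4.5} is isotopic to a projection by Lemma \ref{3.2}; Lemma \ref{Q} then splits this isotopy into automorphisms over $A_a$ and $A_{1+a}$ whose associated surjections $P\surj A[T]$ agree on the overlap and patch. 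Without the monic ideal $I$, the localization at $1+J$, and the isotopy/Quillen splitting, your argument does not close.
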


If $A$ is an affine algebra over an infinite field $k$, then  singular locus of Spec$(A)$ is a closed set.  In particular, if $A$ is an afffine $k$-algebra having an isolated singularity, then ht $\mathcal J=d$ and (\ref{roitman}) is applicable..

The following result is proved in Mandal-Murthy \cite[Theorem 3.2]{MMu} when $L=I^2$. We will essentially follow their proof (see \ref{main2}).

\begin{theorem}\label{mmu}
Let $A$ be an affine $\ol \BF_p$-algebra of dimension $d\geq 2 $ and $I,L$ be ideals of $A$ with $L\subset I^2$. Let $P$ be a projective $A$-module of rank $d$ and $\phi:P\surj I/L$ be a surjection. Then $\phi$ can be lifted to a surjection $\Phi:P\surj I$.
\end{theorem}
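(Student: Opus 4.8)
The plan is to lift $\phi$ to a (not necessarily surjective) homomorphism and then correct it, confining every correction to $L$ so that the reduction modulo $L$ is preserved throughout. Since $P$ is projective, $\phi$ lifts along the surjection $I\surj I/L$ to some $\psi:P\ra I$ reducing to $\phi$ modulo $L$. Put $J=\psi(P)\subseteq I$. Surjectivity of $\phi$ gives $J+L=I$, and since $L\subseteq I^2$ this forces $J+I^2=I$. The whole problem now reads: find $\eta\in\Hom(P,L)$ with $(\psi+\eta)(P)=I$; then $\Phi=\psi+\eta$ is the desired surjection, as $\eta(P)\subseteq L$ keeps $\Phi$ congruent to $\phi$ modulo $L$.

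The first observation I would record is that the only possible failure of surjectivity of $\psi$ sits off $V(I)$. Indeed, for any prime $\mathfrak p\supseteq I$ we have $L\subseteq I^2\subseteq\mathfrak p I$, so $J+L=I$ localizes to $\psi(P)_{\mathfrak p}+\mathfrak pI_{\mathfrak p}=I_{\mathfrak p}$, and Nakayama gives $\psi(P)_{\mathfrak p}=I_{\mathfrak p}$. Thus $\psi$ is already onto $I$ at every prime containing $I$, and adding any $\eta\in\Hom(P,L)$ does not disturb this, since again $\eta(P)_{\mathfrak p}\subseteq\mathfrak pI_{\mathfrak p}$ there. Writing $I=(J,e)$ with $e\in L$ and $e-e^2\in J$ (the ideal $I/J$ is a finitely generated idempotent ideal of $A/J$, hence generated by an idempotent, which I lift into $L$ using $J+L=I$) and $K=(J,1-e)$, one gets $I+K=A$ and $V(J)=V(I)\sqcup V(K)$. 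Hence the locus where $\psi$ can still fail to be onto $I$ is exactly the complementary piece $V(K)$, and there $L\not\subseteq\mathfrak p$, so corrections from $\Hom(P,L)$ are unconstrained on $V(K)$.

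Next I would put the problem in general position. Regarding $\psi$ as a global section of the rank-$d$ bundle $P^{*}$, a generic correction by an element of $\Hom(P,L)$ (legitimate since $\ol\BF_p$ is infinite and since $L$ is full along $V(K)$) makes the zero locus of $\psi+\eta$ away from $V(I)$ of the expected codimension, i.e.\ makes $V(K)$ a finite set of closed points; the positive-dimensional part of $V(J)$ is forced to lie inside the fixed set $V(I)$ and causes no trouble. At this stage the residual task is to remove these finitely many zeros on $V(K)$ by a further correction inside $\Hom(P,L)$, that is, to produce a nowhere-vanishing (unimodular) section there.

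The crux, and the only place the ground field enters, is exactly this last step: clearing the finitely many residual zeros is governed by the top Euler/Chern class obstruction of $P$ — precisely the obstruction that is nonzero in the real-sphere example of the introduction — and it is a theorem special to affine $\ol\BF_p$-algebras (Suslin's completion of unimodular rows of length $d+1$, equivalently the vanishing of the relevant top Euler class) that this obstruction dies, so the finite zero set can be removed. Carrying this out within $\Hom(P,L)$, rather than within $\Hom(P,I^2)$ as in Mandal--Murthy's case $L=I^2$, is the new bookkeeping; it is available because $L$ is full on $V(K)$ while $\psi$ is already onto near $V(I)$. Removing the zeros yields $\eta$ with $V\big((\psi+\eta)(P)\big)=V(I)$, and combined with surjectivity at the primes over $V(I)$ this forces $(\psi+\eta)(P)=I$. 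The main obstacle is therefore this Euler-class vanishing: the whole statement rests on it, and the argument genuinely fails over fields such as $\BR$ where it is false.
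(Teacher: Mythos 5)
Your Step~1 reduction is sound and in fact matches the paper's: you lift $\phi$ to $\psi:P\ra I$, use $J+L=I$ and an idempotent $e\in L$ modulo $J$ to split $V(\psi(P))$ into $V(I)$ (where Nakayama already gives local surjectivity) and a residual piece $V(K)$ comaximal with $I$ and with $L$, and a Bertini-type general position argument (the paper uses Swan's Bertini with a correction $c\psi$, $c\in L$) makes the residual ideal a reduced height-$d$ ideal, i.e.\ a finite set of smooth closed points. The diagnosis that everything then hinges on killing this residual finite zero locus by a further correction inside $\Hom(P,L)$, and that this is exactly what fails over $\BR$, is also correct.

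The gap is that this last step is asserted rather than proved, and the principle you invoke --- ``Suslin's completion of unimodular rows of length $d+1$, equivalently the vanishing of the relevant top Euler class'' --- is not a quotable theorem in the form you need; making it work is the entire content of the proof. Concretely, what has to be shown is: given the reduced residual ideal $K$ (a product of smooth maximal ideals) together with the specific generators of $K/K^2$ induced by $\psi$, there is $\eta\in\Hom(P,L)$ (not merely $\Hom(P,K)$ or $\Hom(P,I^2)$) with $(\psi+\eta)(P)=I$. In the paper this requires, in order: Mohan Kumar--Murthy--Roy and Murthy/Krishna--Srinivas to show $K$ is a complete intersection and $P$ has a unimodular element; Swan's Bertini again to produce $h\in K$ with $h-1\in L$ such that $A/Ah$ is smooth; Murthy's zero-cycle results and the divisibility of $F^1K_0$ over $\ol\BF_p$ plus Suslin's cancellation to show $P/hP$ is \emph{free}; the Mandal--Murthy lemma (\ref{3.11}) to produce a unimodular row $(\ol\lambda_i)$ with $\sum\ol\lambda_i\ol{\phi'}(\ol p_i)=0$, hence a splitting $P=P''\oplus Ap'$ with $\phi'(p')=ah$; and finally the explicit redefinition $\Phi(p')=a$, which is a lift of $\phi$ precisely because $a-ah=a(1-h)\in L$. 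None of this is supplied or replaced by your appeal to an ``Euler class vanishing'': the weak Euler class statement (that $P$ has a unimodular element, or maps onto \emph{some} complete intersection) does not remove a \emph{prescribed} finite zero set with its prescribed local data, and the Euler class group formalism in rank $=\dim$ does not by itself confine the correction to $\Hom(P,L)$ for an arbitrary $L\subset I^2$. As written, the proposal restates the theorem at its hardest point rather than proving it.
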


When $A$ is an affine algebra over $\ol\BF_p$, then using (\ref{mmu}), we can improve Bhatwadekar-Keshari \cite[Theorem 5.3]{BK} as follows (see \ref{main31}).

\begin{theorem}\label{mmu1}
Let $A$ be an affine $\ol\BF_p$-algebra of dimension $d$ and $P$ be a projective $A[T]$-module of rank $n$ which is extended from $A$ with  $2n\geq d+2$. Let $f\in A[T]$ be a monic polynomial such that $Um(P_f)\neq \gv$. Then $Um(P)\neq \gv$.
\end{theorem}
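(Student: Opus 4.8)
The plan is to run the argument of Bhatwadekar--Keshari \cite[Theorem 5.3]{BK} for the extended case $2n\geq d+3$, and to replace the one surjection-lifting step in that proof --- the step responsible for the margin $+3$ --- by an application of Theorem \ref{mmu}. Since Theorem \ref{mmu} supplies the required lifting over an affine $\ol\BF_p$-algebra in the borderline case where the rank equals the dimension, this single substitution lowers the hypothesis to $2n\geq d+2$, and the field $\ol\BF_p$ enters the proof at that point only.

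First I would use that $P$ is extended to write $P=P_0\otimes_A A[T]$ with $P_0$ projective of rank $n$ over $A$. The hypothesis $Um(P_f)\neq\gv$ gives a surjection $P_f\surj A[T]_f$; clearing denominators produces an $A[T]$-linear map $P\to A[T]$ that is surjective after inverting $f$, whose image is an ideal $I\subset A[T]$ with $I_f=A[T]_f$ and a surjection $\alpha\colon P\surj I$. Because $f$ is monic we have $f^{N}\in I$ for some $N$, so $A[T]/I$ is a quotient of the module-finite $A$-algebra $A[T]/(f^{N})$; in particular $A[T]/I$ is integral over $A$ and $\dim A[T]/I\leq d$.

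Next I would put $I$ in general position. As $\ol\BF_p$ is infinite, the moving lemmas of \cite{BK} apply: I would replace $\alpha$ by a surjection, still written $\alpha\colon P\surj I$, onto a local complete intersection ideal with $\hh I=n$ and $A[T]/I$ finite over $A$, carrying an induced surjection $\ol\alpha\colon P/IP\surj I/I^{2}$ and a compatible local orientation. The numerics are now favourable: $\dim A[T]/I\leq (d+1)-n\leq n-1$ by $2n\geq d+2$, so the support of $I$ has dimension strictly below the rank of $P$, which is exactly the slack the classical argument over a general field lacks by one.

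The heart of the proof, and the step I expect to be the main obstacle, is to promote the surjection $\ol\alpha$ given modulo $I^{2}$ (more precisely modulo a suitable $L\subset I^{2}$) to an honest surjection onto an ideal, and to iterate this surgery until the ideal becomes the unit ideal, yielding $P\surj A[T]$ and hence $Um(P)\neq\gv$. In \cite{BK} this lifting rests on a result valid only when the rank exceeds the dimension of the relevant subscheme, which is what forces $2n\geq d+3$. Here I would instead invoke Theorem \ref{mmu} over the affine $\ol\BF_p$-algebra governing the surgery, whose dimension equals the rank $n$ of the restricted module, so that the critical rank-equals-dimension case is covered. The delicate bookkeeping is in checking that the data from the general-position step genuinely meet the hypotheses of Theorem \ref{mmu} (that the ambient algebra is affine over $\ol\BF_p$ of the right dimension, that $L\subset I^{2}$, and that the orientations are compatible), and in organising the iteration so that the dimension of the support strictly drops at each stage until it is empty.
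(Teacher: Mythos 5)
Your proposal is essentially the paper's own argument: the authors also run the Bhatwadekar--Keshari proof of \cite[Theorem 5.3]{BK} verbatim and replace the lifting lemma that forces $2n\geq d+3$ (their Lemma \ref{4.5}) with an $\ol\BF_p$-version (Corollary \ref{4.51}, derived from Theorem \ref{mmu} via Corollary \ref{4.41}), which is exactly the single substitution you identify. The only point you omit is that the paper first disposes of the low-dimensional cases $d=2$ and $d=3$ by citing Bhatwadekar and Bhatwadekar--Sridharan, since the Mandal--Murthy-type lifting (and its corollaries, which need the relevant quotient to have dimension $\geq 2$) is only invoked for $d\geq 4$, $n\geq 3$.
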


If $A$ is a smooth affine $\ol \BF_p$-algebra of dimension $d$ and $P$ is a projective $A[T]$-module of rank $n$, then $P$ is extended from $A$ by Popescu \cite{P}. Thus if $2n\geq d+2$ and $Um(P_f)\neq \gv$ for some monic polynomial $f\in A[T]$, then $Um(P)\neq \gv$ by (\ref{mmu1}).

Using above results, we derive the results stated in the abstract (see \ref{32}) and the following results (see \ref{LGP}, \ref{LGP3}, \ref{eP}, \ref{main61}). When $A$ is regular, $(1)$ is due to Das-Sridharan  \cite[Theorem 3.9]{DRS}, $(2)$ is due to Bhatwadekar-Keshari \cite[Theorem 4.13]{BK}, $(3)$ and $(4)$ are due to Bhatwadekar-Sridharan \cite[Theorem 5.4]{BRS} when $r=1$, $P\oplus R=R^{n+1}$ and $2n\geq d+4$.

\begin{theorem}\label{aaa}
Let $R$ be a ring of dimension $d$ containing a field $k$ and $n$ be an integer with
 $2n \geq d+3$. Assume the singular locus of $Spec(R)$ is a closed set $V(\mathcal J)$ with ht$\mathcal J \geq d-n+2$. Then following holds.
\begin{enumerate}
\item Assume $k$ is infinite. The following sequence of Euler class groups is exact, where $\mathfrak m$ runs over all the maximal ideals of $R$.
$$0 \rightarrow E^n(R) \rightarrow E^n(R[T]) \rightarrow \prod_\mathfrak{m} E^n(R_\mathfrak{m}[T]) .$$

\item Assume $k$ is infinite. Let $P$ be a projective $R$-module of rank $n$ and $I$ be an ideal of $R[T]$ of height $n$. Let $\phi:P[T]\surj I/I^2T$ be a surjection. Then $\phi$ can be lifted to a surjection $\Phi:P[T]\surj I$ if and only if $\phi\otimes R_{\mathfrak m}[T]$ can be lifted to a surjection $:P_{\mathfrak m}[T] \surj IR_{\mathfrak m}[T]$ for all maximal ideals $\mathfrak m$ of $A$.

\item  There exist a well defined Euler class map $e:Um_{r,n+r}(R[T]) \to E^n(R[T])$.

\item  Assume $k$ is infinite. Let $P$ be a stably free $R[T]$-module of rank $n$. Then $Um(P)\neq \gv$ if and only if $e(P) = 0 \in E^n(R[T])$.
\end{enumerate}
\end{theorem}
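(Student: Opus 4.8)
The plan is to obtain all four parts by running the proofs of their regular-ring prototypes --- Das--Sridharan \cite{DRS} for $(1)$, Bhatwadekar--Keshari \cite{BK} for $(2)$, and Bhatwadekar--Sridharan \cite{BRS} for $(3)$ and $(4)$ --- and replacing each appeal to regularity of $R$ by the hypothesis that $\Sing(\Spec R)=V(\mathcal J)$ is closed with $\hh\mathcal J\ge d-n+2$. The conceptual point is that this hypothesis forces $\dim V(\mathcal J)\le n-2$, so that any rank-$n$ module restricted to the singular locus automatically carries unimodular elements (its rank exceeds the dimension there by two); hence every Euler-class obstruction in degree $n$ is concentrated on the regular locus $\Spec R\setminus V(\mathcal J)$, where the cited arguments apply unchanged, and Theorem \ref{roitman} is precisely the device that transports conclusions between $R[T]$ and $R$ over all of $\Spec R$. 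Accordingly I would first reprove, under the present hypotheses, the three standard engines of the formalism --- the general position (moving) lemma for height-$n$ ideals of $R$ and of $R[T]$, and the addition and subtraction principles for surjections onto such ideals --- each of which reduces, after the above localization of the obstruction, to its known regular-case form together with the dimension count $2n\ge d+3$.

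Granting these, Part $(2)$ is the crux and the main obstacle. The ``only if'' direction is immediate by localizing a global lift. For ``if'', I would argue as in \cite{BK}: the obstruction to lifting $\phi:P[T]\surj I/I^2T$ to a surjection onto $I$ lives in a relative Euler class group, and the hypothesis says this obstruction dies after localizing at every maximal ideal $\mathfrak m$ of $R$. One then patches the local lifts into a global one. The single place where \cite{BK} invokes regularity is in guaranteeing that, after inverting a suitable monic polynomial in $T$, the relevant projective module over $R[T]_f$ acquires a unimodular element so that the local lifts can be spread over $\Spec R[T]$; under the closed-singular-locus hypothesis this is supplied exactly by Theorem \ref{roitman}. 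The delicate bookkeeping is to carry the monic-inversion and patching through while holding the orientation fixed modulo $I^2T$, so that the glued surjection specializes correctly at $T=0$; this is where I expect the real work to lie.

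Part $(1)$ then follows. Injectivity of $E^n(R)\to E^n(R[T])$ is homotopy invariance: if $(I,\omega_I)$ becomes trivial over $R[T]$, then by the fundamental characterization of the Euler class group the orientation $\omega_I\ot R[T]$ lifts to a surjection $R[T]^n\surj IR[T]$, and specializing $T\mapsto 0$ yields a lift of $\omega_I$ to $R^n\surj I$, so $(I,\omega_I)=0$ already in $E^n(R)$. For exactness in the middle, a class of $E^n(R[T])$ lying in the kernel of the localization map dies in every $E^n(R_{\mathfrak m}[T])$; feeding this local triviality into Part $(2)$ produces compatible local lifts, and the resulting patched surjection, combined with the $T\mapsto0$ specialization, exhibits the class as extended from $E^n(R)$. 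That extended classes in turn lie in this kernel is the statement that extended Euler classes become trivial after localization, which is part of the same formalism (and is the point where the local computation of \cite{DRS} is adapted to the present hypotheses).

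Finally, for $(3)$ and $(4)$ I would follow \cite{BRS}. To see that $e:\Um_{r,n+r}(R[T])\to E^n(R[T])$ is well defined, send a stably free $P$ of rank $n$ to $(I,\omega_I)$ obtained from a generic surjection $P\surj I$ onto a height-$n$ ideal in general position; independence of the choices follows from the general position lemma together with the comparison of two such generic surjections, and inspection shows this argument uses only $2n\ge d+3$ and the closed singular locus, consistent with $(3)$ carrying no infinitude hypothesis on $k$. For $(4)$, one direction is formal: a unimodular element of $P$ splits off a free rank-one summand, making the associated generic ideal a proper complete intersection, whence $e(P)=0$. Conversely, $e(P)=0$ means the orientation induced by a chosen surjection $P\surj I$ lifts to $R[T]^n\surj I$; applying the subtraction principle over $R[T]$ to these two surjections onto $I$ with the same orientation yields a unimodular element of $P$. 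Here again the dimension bound and Theorem \ref{roitman} are what make the subtraction principle and the requisite cancellation valid over $R[T]$ rather than only over $R$.
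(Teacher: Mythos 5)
Your overall strategy --- rerun Das--Sridharan, Bhatwadekar--Keshari and Bhatwadekar--Sridharan, replacing each appeal to regularity of $R$ by the hypothesis that the singular locus $V(\mathcal J)$ is closed of height $\geq d-n+2$ --- is exactly the paper's, and your outline of $(1)$, $(2)$ and $(3)$ is consistent with it. Two points you misstate, though. First, the central new engine is not the moving/addition/subtraction trio per se but Proposition \ref{monic} (a surjection $P[T]\surj I/I^2T$ lifts to $P[T]\surj I$ once it lifts over $A(T)$), built from the Nori-type Lemma \ref{nori}; the mechanism replacing regularity is not that the obstruction ``lives on the regular locus'' in the abstract, but concretely that for $s\in J=I\cap A\cap\mathcal J$ the ring $A_s$ is regular (so Popescu applies to it), while $\hh J\geq d-n+2$ forces $\dim A/J\leq n-2$ on the complementary piece. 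Second, your claim that Bhatwadekar--Keshari invoke regularity in only one place in the proof of $(2)$ --- to produce a unimodular element after inverting a monic, via Theorem \ref{roitman} --- is not accurate: the paper's proof of $(2)$ never uses Theorem \ref{roitman}, but substitutes \ref{nori}, \ref{4.7}, \ref{subtraction} and \ref{monic} at the several places where \cite{BK} uses regularity.

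The genuine gap is in $(4)$. Your converse direction (``$e(P)=0$ means the orientation lifts to $R[T]^n\surj I$; apply the subtraction principle over $R[T]$ to get a unimodular element of $P$'') runs the Bhatwadekar--Sridharan argument over the ring $R[T]$, which has dimension $d+1$; that argument requires $2n\geq \dim+3$, i.e.\ $2n\geq d+4$, strictly stronger than the hypothesis $2n\geq d+3$ (this is precisely the range in which \cite{BRS} originally proved the result, as the paper points out). The paper avoids this by passing to $R(T)$, which has dimension $d$: $e(P)=0$ gives $e(P\otimes R(T))=0$, the $R$-module case of $(4)$ applied over $R(T)$ gives $\Um(P\otimes R(T))\neq\gv$, and then Theorem \ref{roitman} --- this is its actual role in the proof --- descends this to $\Um(P)\neq\gv$. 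You also have the two directions backwards in difficulty: ``$\Um(P)\neq\gv\Rightarrow e(P)=0$'' is not formal (splitting off a free summand does not make the generic ideal a complete intersection, nor trivialize the orientation); in the paper it is the longer half, requiring one to arrange $I(0)$ to be proper of height $n$ or all of $R$, lift $\omega_{I(0)}$ using the $R$-module case, patch to a surjection onto $I/I^2T$, lift that over $R(T)$, and finally invoke Proposition \ref{monic} to produce the global lift $R[T]^n\surj I$.
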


%%%%%%%%%%%%%%%%%%%%%%%%%%%%%%%%%%%%
\section{Preliminaries}

The following result is due to Keshari-Zinna \cite[Proposition 1.1]{KZ}.

\begin{proposition}\label{KZ}
 Let $R$ be a ring and $P$ be a projective $R[T]$-module. Let $J\subset R$ be an ideal such that $P_s$ is extended from $R_s$ for every $s\in J$. Suppose that 
\begin{enumerate}
\item $P/JP$ contains a unimodular element.
\item If $I$ is an ideal of $(R/J)[T]$ of height $ rank(P)-1$, then there exist $\overline \sigma\in Aut((R/J)[T])$ with $\overline \sigma(T)=T$ and a lift $\sigma\in Aut(R[T])$ of $\overline \sigma$ with $\sigma(T)=T$ such that $\overline \sigma(I)$ contains a monic polynomial in $T$.
\item $EL(P/(T,J)P)$ acts transitively on $Um(P/(T,J)P)$.
\item There exists a monic polynomial $f\in R[T]$ such that $Um(P_f)\neq \gv$. 
\end{enumerate}
Then the natural map $Um(P)\surj Um(P/TP)$ is  surjective. In particular, $Um(P/TP)\neq \gv$ implies $Um(P)\neq \gv$.
\end{proposition}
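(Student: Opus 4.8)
The plan is to prove that the natural map $Um(P)\surj Um(P/TP)$ is onto by a Quillen-style local--global patching, splitting $\Spec R$ into the open locus where $P$ is extended and the closed set $V(J)$, and treating the two separately. Fix $v\in Um(P/TP)$; the task is to produce $\tilde v\in Um(P)$ with $\tilde v\equiv v\pmod{T}$. First I would dispose of the open part: for each $s\in J$ the module $P_s$ is extended from $R_s$, so $P_s\cong (P/TP)_s\otimes_{R_s}R_s[T]$ and the constant extension of $v_s$ is already a unimodular lift of $v$ over $R_s[T]$. Hence the lifting problem is unobstructed over $\Spec R\setminus V(J)$, and all the difficulty is concentrated along $V(J)$, which is what the remaining three hypotheses are designed to handle.

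Next I would solve the lifting problem modulo $J$, working over $\ol R[T]:=(R/J)[T]$ with $\ol P:=P/JP$. By (1) there is some $w\in Um(\ol P)$; reducing mod $T$ and comparing with the image $\ol v\in Um(P/(T,J)P)$ of $v$, condition (3) provides an elementary automorphism carrying $w(0)$ to $\ol v$, so after adjusting $w$ I may assume $w(0)=\ol v$ at $T=0$. To upgrade this to a lift of $\ol v$ over all of $\ol R[T]$, split $\ol P\cong\ol R[T]\oplus\ol Q$ along $w$, with $\ol Q$ of rank $\mathrm{rank}(P)-1$, and feed the height $\mathrm{rank}(P)-1$ ideal cut out by a generic section of $\ol Q$ into condition (2): this yields $\ol\sigma\in Aut(\ol R[T])$ fixing $T$ after which that ideal contains a monic polynomial, while (4) supplies a monic $\ol f$ with $Um(\ol P_{\ol f})\neq\gv$. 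Inverting the monic and localising complementarily, a Horrocks-type argument (local Horrocks plus Quillen gluing over $\Spec\ol R$) then produces $\tilde w\in Um(\ol P)$ with $\tilde w\equiv\ol v\pmod{T}$. The demand in (2) that $\ol\sigma$ lift to $\sigma\in Aut(R[T])$ with $\sigma(T)=T$ is precisely what lets this normalisation be performed compatibly upstairs in $R[T]$, not merely in the quotient.

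Finally I would patch the two solutions, and this is where I expect the real difficulty to lie. Lifting $\tilde w$ to some $p\in P$ with $p\equiv v\pmod{T}$ gives an element that is unimodular modulo $J[T]$ and that, over $D(J)$, differs from the extended lift of the first step only by an automorphism of $P$ which is the identity modulo $T$. The obstacle is to reconcile these two lifts of the single element $v$ into one globally unimodular element: the overlap automorphism must be factored as a product of one piece defined over $D(J)$ and one defined near $V(J)$, so that correcting $p$ by it leaves $p\equiv v\pmod{T}$ intact while making $p$ unimodular everywhere. I would accomplish this through the Quillen--Suslin local--global principle for the elementary subgroup, whose applicability is exactly what the transitivity hypothesis (3) secures, together with the monic polynomial from (4) that controls the $T$-direction. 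Once $p$ is corrected to a genuine $\tilde v\in Um(P)$ lifting $v$, surjectivity follows, and the special case $Um(P/TP)\neq\gv\Rightarrow Um(P)\neq\gv$ is immediate. The points needing the most care are that every automorphism invoked restricts to the identity modulo $T$ and that the monic polynomials survive all the reductions, which is the reason conditions (2) and (4) are phrased the way they are.
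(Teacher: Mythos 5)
First, a point of reference: the paper offers no proof of this proposition --- it is quoted verbatim from Keshari--Zinna \cite[Proposition 1.1]{KZ} --- so your attempt can only be measured against the method of that reference, which is the Plumstead/Bhatwadekar--Roy patching scheme your opening paragraph correctly identifies. The overall decomposition (constant lifts over $D(s)$ for $s\in J$ where $P_s$ is extended, a separate construction over $V(J)$, and a Quillen-type splitting of the discrepancy) is the right skeleton.

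There are, however, two genuine gaps. (a) Your middle step is internally inconsistent: once you assert that hypothesis (3) lets you adjust $w\in Um(P/JP)$ so that $w(0)=\ol v$, you already possess a unimodular element of $P/JP$ lifting $\ol v$, and the subsequent Horrocks-type argument producing ``$\tilde w$ with $\tilde w\equiv \ol v\pmod T$'' manufactures something you claim to already have. The circularity signals that the adjustment cannot be performed where you perform it: (3) gives an elementary automorphism of $P/(T,J)P$, and lifting it to an automorphism of $P/JP$ carrying $w$ to an element with reduction $\ol v$ is not automatic. In the correct argument, (2) and (4) are precisely what produce --- via an Eisenbud--Evans/Plumstead addition after putting a height-$(\mathrm{rank}(P)-1)$ order ideal in monic position --- a unimodular element of $P/JP$ with prescribed reduction modulo $(T)$; they are not an optional ``upgrade'' of a solution already in hand. (b) More seriously, both here (``local Horrocks plus Quillen gluing over $\Spec\ol R$'') and in the final patching step (``the Quillen--Suslin local--global principle for the elementary subgroup, whose applicability is exactly what (3) secures'') you invoke a local--global principle for unimodular elements, respectively for $EL(P)$, as an off-the-shelf tool. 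No such principle exists in this generality; if it did, Roitman's question would be trivial, and the example in the introduction shows unimodular elements do not localize naively. What actually closes the argument is: make the two local solutions agree on $P/(T,J)P$ using (3); express the discrepancy over $R_{s(1+s)}$ as an automorphism that is \emph{isotopic to the identity} --- this is where the monic polynomial from (4) enters, via the analogue of Lemma \ref{3.2} --- and only then split it by Quillen's Lemma \ref{Q}. Your proposal names this as the hard step but supplies no mechanism for the isotopy, which is the heart of the proof.
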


\begin{definition}
Let $A$ be a ring and $P$ be a projective $A$-module. Then $\sigma\in Aut(P)$ is called isotopic to identity if there exist $\Phi(W) \in Aut(P[W])$ such that $\Phi(0)=Id$ and $\Phi(1)=\sigma$. Here $P[W]=P\otimes_A A[W]$.
\end{definition}

The proof of the following result is contained in Quillen \cite[Theorem 1]{Q}.
\begin{lemma}\label{Q}
Let $B$ be a ring and $P$ be a projective $B$-module. Let $a,b\in B$ generate the ideal $B$ and $\sigma \in Aut_{B_{ab}}(P_{ab})$ which is isotopic to identity. Then $\sigma=\tau_a\circ\theta_b$ where $\tau \in Aut_{B_b}(P_b)$ with $\tau=Id$ modulo $Ba$ and $\theta\in Aut_{B_a}(P_a)$ with $\theta=Id$ modulo $Bb$.
\end{lemma}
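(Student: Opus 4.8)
The plan is to unwind the isotopy into a two-parameter family of automorphisms and then split it using the comaximality of $a$ and $b$; this is exactly the content of Quillen's analytic-isomorphism argument. Since $\sigma$ is isotopic to identity, choose $\Phi(W)\in\Aut_{B_{ab}}(P_{ab}[W])$ with $\Phi(0)=Id$ and $\Phi(1)=\sigma$; note that both $\Phi(W)$ and $\Phi(W)^{-1}$ are polynomial in $W$. First I would pass to the two-variable automorphism
\[
\Psi(X,Y)=\Phi(X+Y)\,\Phi(Y)^{-1}\in\Aut_{B_{ab}}(P_{ab}[X,Y]),
\]
which satisfies $\Psi(0,Y)=Id$; thus, writing $\Psi(X,Y)=Id+\sum_{i\geq 1}X^iM_i(Y)$, the coefficients $M_i(Y)$ lie in $\End_{B_{ab}}(P_{ab}[Y])$ and there are only finitely many of them.

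The key step is a denominator estimate. As $P$ is finitely generated projective, $\End_{B_{ab}}(P_{ab})=\End_B(P)\otimes_B B_{ab}$, so each $M_i(Y)$ has the form $(ab)^{-c_i}$ times the image of an element of $\End_B(P)[Y]$; in particular the denominators are bounded. Therefore, for all large $N$, the substitution $X\mapsto a^NX$ multiplies the coefficient of $X^i$ by $a^{iN}$ and clears every power of $a$ from the denominators, and the same estimate applied to $\Psi^{-1}$ (which is also trivial at $X=0$) clears $a$ from its denominators. It follows that
\[
G(X,Y):=\Phi(a^NX+Y)\,\Phi(Y)^{-1}=\Psi(a^NX,Y)
\]
descends to an element of $\Aut_{B_b}(P_b[X,Y])$; and after enlarging $N$ so that each coefficient of $X^i$ with $i\geq 1$ retains a factor of $a$, we get $G(X,Y)\equiv Id$ modulo $aB_b$. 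By the symmetric substitution $X\mapsto b^NX$, the automorphism $H(X,Y):=\Phi(b^NX+Y)\,\Phi(Y)^{-1}$ descends to $\Aut_{B_a}(P_a[X,Y])$ with $H(X,Y)\equiv Id$ modulo $bB_a$.

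It remains to telescope. Since $a,b$ are comaximal, so are $a^N,b^N$, giving $c,d\in B$ with $ca^N+db^N=1$. I would then factor
\[
\sigma=\Phi(1)=\Phi\big(db^N+ca^N\big)=\big[\Phi(ca^N+db^N)\Phi(db^N)^{-1}\big]\big[\Phi(db^N)\Phi(0)^{-1}\big].
\]
The first bracket is $G(c,db^N)$ and the second is $H(d,0)$. Setting $\tau:=G(c,db^N)\in\Aut_{B_b}(P_b)$ and $\theta:=H(d,0)\in\Aut_{B_a}(P_a)$, the congruences above give $\tau=Id$ modulo $Ba$ and $\theta=Id$ modulo $Bb$, while the displayed identity reads $\sigma=\tau_a\circ\theta_b$ in $\Aut_{B_{ab}}(P_{ab})$, which is the assertion.

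The step I expect to be the main obstacle is the denominator estimate: one must check that a single exponent $N$ works simultaneously for the finitely many coefficients, so that (i) both $G$ and $G^{-1}$ genuinely descend to $\Aut_{B_b}$ and (ii) the reduction modulo $a$ is the identity, and likewise for $H$ over $B_a$. Keeping this bookkeeping intrinsic for projective (as opposed to free) $P$, through the identification $\End_{B_{ab}}(P_{ab})=\End_B(P)\otimes_B B_{ab}$, is the delicate point.
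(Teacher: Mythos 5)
Your proposal is correct and is precisely Quillen's splitting argument (form $\Phi(X+Y)\Phi(Y)^{-1}$, clear denominators via $\End_{B_{ab}}(P_{ab})=\End_B(P)\otimes_B B_{ab}$ after the substitution $X\mapsto a^NX$, then telescope along $ca^N+db^N=1$), which is exactly what the paper relies on: it gives no proof of its own and simply cites Quillen's Theorem 1. You also correctly flag the one delicate point --- ensuring that the lifts of $G$ and $G^{-1}$ over $B_b$ are genuinely mutually inverse when $B_b\to B_{ab}$ is not injective, which is handled by enlarging $N$ so the discrepancy (killed by a fixed power of $a$) is annihilated.
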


Next two results are due to Bhatwadekar-Sridharan \cite[Lemma 3.1, Lemma 3.2]{BR}, respectively.

\begin{lemma}\label{3.1}
Let $A$ be a ring containing an infinite field $k$ and $\wt P$ be a projective $A[T]$-module of rank $n$. If $Um(\wt P_f)\neq \gv$ for some monic polynomial $f\in A[T]$, then there exists a surjection $\alpha:\wt P\surj I$ where $I$ is an ideal of $A[T]$ of height $\geq n$ containing a monic polynomial.
\end{lemma}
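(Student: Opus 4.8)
The plan is to turn the unimodular element over the localization into an honest homomorphism $\wt P \to A[T]$, observe that its image automatically contains a monic polynomial, and then correct the height of that image by a general-position argument. First I would unwind the hypothesis: a unimodular element of $\wt P_f$ is the same as an $A[T]_f$-linear surjection $\phi\colon \wt P_f \surj A[T]_f$. Since $\wt P$ is finitely presented, $\Hom(\wt P, A[T])_f = \Hom_{A[T]_f}(\wt P_f, A[T]_f)$, so after clearing denominators $\phi = \beta/f^N$ for a genuine $\beta \in \Hom(\wt P, A[T])$ whose localization $\beta_f$ is still surjective. Writing $J = \beta(\wt P)$, we get $J_f = A[T]_f$, hence $f^M \in J$ for some $M$; as $f$ is monic, $J$ already contains a monic polynomial. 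Thus the ideal-and-monic part of the conclusion is in hand, and the entire problem reduces to arranging height $\geq n$ while keeping a monic in the image.

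Second, I would read off the height from $\beta$ viewed as an element of the dual $\wt P^{*}$, which is projective of rank $n$. For a prime $\mathfrak p$ of $A[T]$ one has $J \subseteq \mathfrak p$ if and only if $\beta \in \mathfrak p \wt P^{*}_{\mathfrak p}$, i.e. if and only if $\beta$ fails to be a basic element of $\wt P^{*}$ at $\mathfrak p$. Hence $\hh J \geq n$ says exactly that the non-basic locus of $\beta$ has codimension $\geq n$, the expected codimension for the vanishing locus of a section of a rank-$n$ bundle. I would then invoke the Eisenbud--Evans general-position machinery (this is the step that uses the infinite field $k$): replace $\beta$ by a generic $\alpha = \beta + \gamma$ so that $\alpha$ is basic outside a closed set of codimension $\geq n$, producing an ideal $I = \alpha(\wt P)$ with $\hh I \geq n$. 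If instead $n > d$ there is nothing to prove, since $Um(\wt P) \neq \gv$ already by Plumstead \cite{Pl}, and one simply takes $I = A[T]$; so one may assume $n \leq d < d+1 = \dim A[T]$, where codimension $n$ is attainable.

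The main obstacle is to reconcile the height correction with retention of a monic polynomial in the image. The perturbation $\gamma$ must be chosen so that $\alpha_f$ stays surjective --- equivalently, so that $\alpha$ remains basic at every prime of $D(f)$ --- for only then does $I = \alpha(\wt P)$ still contain a power of the monic $f$. Both requirements are generic conditions on $\gamma$: staying basic on $D(f)$ is an open condition already satisfied at $\gamma = 0$ by $\beta$, while cutting the non-basic locus down to codimension $\geq n$ is the Eisenbud--Evans genericity. Since $k$ is infinite and only finitely many primes of $J$ have codimension below $n$, the two dense families of admissible $\gamma$ meet, so a single $\gamma$ does both jobs and $\alpha\colon \wt P \surj I$ is the required surjection. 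The delicate point I expect to demand the most care is precisely verifying that shrinking the global codimension of the non-basic locus to $n$ is compatible with preserving surjectivity along $D(f)$, i.e. that the two conditions really are open and generically simultaneously realizable over $A[T]$.
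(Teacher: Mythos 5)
Your first step is correct and is how any proof must begin: a unimodular element of $\wt P_f$ gives a surjection $\wt P_f\surj A[T]_f$, clearing denominators gives $\beta\in \wt P^*$ with $J=\beta(\wt P)\ni f^M$, and ``$\hh J\geq n$'' is indeed the condition that $\beta$ be basic outside codimension $n$. (For the record, the paper offers no proof of this lemma; it is imported from Bhatwadekar--Sridharan \cite[Lemma 3.1]{BR}.) The gap is in your final paragraph, precisely at the point you flag as delicate. ``$\alpha$ remains basic at every prime of $D(f)$'' is an intersection of infinitely many conditions, one per prime not containing $f$; it is not an open, dense, or generic condition on $\gamma$ in any finite-dimensional family of perturbations, and in fact it is \emph{generically violated}: a general perturbation of $\beta$ has non-basic locus of codimension exactly $n$, and a codimension-$n$ closed subset of $\Spec A[T]$ has no reason to avoid the dense open set $D(f)$. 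A rank-one toy case already refutes the claim: for $\wt P=A[T]$ and $\beta$ multiplication by $f^M$, the perturbations $\alpha=f^M+\gamma$ that stay surjective after inverting $f$ are exactly those for which $f^M+\gamma$ is a unit of $A[T]_f$, i.e.\ divides a power of $f$ up to units --- a very thin set, not a dense family that meets every other dense family. So ``the two dense families of admissible $\gamma$ meet'' restates the difficulty rather than resolving it.

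The tension is structural, not a matter of missing details. The natural way to protect the monic is to take $\gamma\in f^M\wt P^*$ (the Eisenbud--Evans pair $(\beta,f^M)$), but then $\alpha(\wt P)\subseteq J$ and $\alpha(\wt P)+(f^M)=J$, so every minimal prime of $J$ contains $\alpha(\wt P)$ and $\hh\,\alpha(\wt P)\leq \hh\,J$: this perturbation can never raise the height. Conversely, a perturbation that does raise the height will in general destroy surjectivity over $D(f)$, so the monic polynomial in the final ideal $I$ is typically \emph{not} a power of $f$ --- in the toy case above it is $f^M+c$ for a general constant $c\in k$, which is monic but not a unit in $A[T]_f$. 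Thus the invariant you propose to preserve ($I_f=A[T]_f$) is the wrong one; a correct argument must instead track a monic (or unit-leading-coefficient) element through the height-raising perturbation directly, by controlling degrees and leading coefficients --- and this is where the infinite-field hypothesis, which your argument never genuinely uses, has to enter (perturbations with coefficients in $k$ keep degrees bounded, so that $f^M+\gamma(p_0)$ retains a unit leading term). As written, your proof establishes the easy half (an ideal containing a monic, namely $J$) and asserts the hard half.
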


\begin{lemma}\label{3.2}
Let $R$ be a ring and $Q$ be a projective $R$-module. Let $(\alpha(T),f(T)):Q[T]\oplus R[T] \surj R[T]$ be a surjection with $f(T)$ monic. Let $pr_2:Q[T]\oplus R[T] \surj R[T]$ be the second projection. Then there exists $\sigma(T)\in Aut(Q[T]\oplus R[T])$ which is isotopic to identity and $pr_2\circ\sigma(T)=(\alpha(T),f(T))$.
\end{lemma}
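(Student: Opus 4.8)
The plan is to prove the statement by a local--global (Quillen) patching argument, reducing to the case where $R$ is local, where the monic polynomial makes the problem elementary. It is worth noting at the outset that the hypothesis that $f$ be monic is indispensable: without it a unimodular surjection $(\alpha,f)$ need not even be completable to an automorphism (the stably free modules with empty $Um$ quoted in the introduction exhibit exactly such an obstruction), so no purely transvection-theoretic manipulation of $pr_2$ can succeed. All the work is therefore in extracting, from the monic $f$, a \emph{global} automorphism isotopic to the identity.

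First I would settle the local case. Fix a maximal ideal $\mathfrak m$ of $R$; after inverting a suitable $s\notin\mathfrak m$ we may assume $Q$ is free, so that $Q[T]\oplus R[T]\cong R_s[T]^{n+1}$ and $(\alpha,f)$ becomes a unimodular row over $R_s[T]$ one of whose entries is the monic $f$. By the standard reduction of Suslin --- division with remainder against the monic entry lowers the degrees of the remaining entries, after which one trivialises over the local ring --- the row $(\alpha,f)$ is carried to $(0,\dots,0,1)$ by a matrix $\varepsilon\in\E_{n+1}(R_s[T])$; equivalently $pr_2\circ\varepsilon=(\alpha,f)$. Since each elementary generator $\E_{ij}(\lambda)$ is joined to the identity through $\E_{ij}(W\lambda)$, the product $\varepsilon$ is isotopic to the identity. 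Thus over each chart $R_{s_i}[T]$ of a finite affine cover of $\Spec R$ we obtain $\sigma_i\in\Aut(Q_{s_i}[T]\oplus R_{s_i}[T])$, isotopic to the identity, with $pr_2\circ\sigma_i=(\alpha,f)$.

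It remains to patch the $\sigma_i$ into one global automorphism. The automorphisms realising the surjection form a single coset of the stabiliser $H=\{\eta:\ pr_2\circ\eta=pr_2\}$, which consists of the ``lower triangular'' maps $(q,r)\mapsto(Aq+rq_1,\,r)$ with $A\in\Aut(Q[T])$ and $q_1\in Q[T]$. Over the overlap $R_{ab}[T]$ of two charts $D(a),D(b)$ (with $a,b$ comaximal) the local solutions differ by $\eta=\sigma_a\sigma_b^{-1}\in H$, which is isotopic to the identity because $\sigma_a,\sigma_b$ are; Lemma \ref{Q} then splits $\eta$ as a product of an automorphism defined over one chart and one defined over the other, and replacing $\sigma_a,\sigma_b$ by these corrected representatives makes them agree on the overlap, after which induction over the cover yields a global $\sigma$ with $pr_2\circ\sigma=(\alpha,f)$, still isotopic to the identity. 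The delicate point --- and the one I would expect to absorb most of the effort --- is precisely this patching: one must check that Quillen's splitting in Lemma \ref{Q} can be carried out \emph{inside} the stabiliser $H$ (so that $pr_2\circ\sigma$ is preserved) and compatibly with the isotopies, i.e.\ that the triangular/unipotent structure together with the extendedness of $Q[T]$ is respected throughout the gluing.
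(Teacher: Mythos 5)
The paper itself offers no proof of this lemma: it is quoted as a preliminary from Bhatwadekar--Sridharan [BR, Lemma 3.2], so there is nothing internal to compare against and your attempt must stand on its own. Its overall shape (monic $f$ gives local triviality, then globalise) is reasonable, but there is a genuine gap at exactly the point you flag and then leave unresolved, namely the patching. On an overlap the two local solutions differ by $\eta=\sigma_a\sigma_b^{-1}$ lying in the stabiliser $H=\{\eta : pr_2\circ\eta=pr_2\}\cong \Aut(Q[T])\ltimes Q[T]$, and $\eta$ is indeed isotopic to the identity in $\Aut(Q[T]\oplus R[T])$; but the only isotopy you actually possess is $\Phi_a(W)\Phi_b(W)^{-1}$, which leaves $H$, since each $\Phi_\bullet(1)$ has bottom row $(\alpha,f)\neq pr_2$. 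Quillen's splitting in Lemma \ref{Q} is obtained by substituting into whatever isotopy is supplied, so the factors $\tau,\theta$ it returns have no reason to lie in $H$; and once $\tau\notin H$, the corrected local solution $\tau\sigma_b$ no longer satisfies $pr_2\circ(\tau\sigma_b)=(\alpha,f)$ and the gluing collapses. Producing an isotopy of $\eta$ to the identity \emph{inside} $H$ requires, among other things, knowing that the $\Aut(Q[T])$-component of $\eta$ is itself isotopic to the identity, which is a cancellation-type assertion, not a routine verification. This is precisely why ``locally in the elementary orbit'' statements for unimodular rows are globalised via the Suslin--Vaserstein local-global principle for the elementary action (where the splitting is performed on the isotopy $\varepsilon(W)$ itself and one uses that the set of good $s\in R$ is an ideal, with row length $\geq 3$ required), and not by naive \v{C}ech correction of the $\sigma_i$.

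Two further soft spots. First, your local step trivialises the row over $R_{\mathfrak m}[T]$ by reducing degrees against $f$ and then using (implicitly) that $R_{\mathfrak m}[T]/(f)$ is semilocal; this handles the residual row $(\bar\alpha_1,\ldots,\bar\alpha_n)$ only when $n\geq 2$, i.e.\ rank $Q\geq 2$, whereas the lemma is stated for arbitrary projective $Q$ and the rank-one case (a length-two row) is not covered by Bass transitivity over semilocal rings. Second, the elementary matrix is produced over the local ring $R_{\mathfrak m}[T]$ and must then be spread out to some $R_s[T]$ with $s\notin\mathfrak m$; that is routine, but it is not what you wrote ($R_s$ with $Q_s$ free is not local, and over such a ring the trivialisation of the row is precisely the nontrivial content). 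A clean repair for free $Q$ of rank $\geq 2$ is to invoke Suslin's theorem that a unimodular row of length $\geq 3$ over $R[T]$ containing a monic entry lies in the elementary orbit of $e_1$, which packages the local-global step correctly, and then observe that elementary matrices are isotopic to the identity; the projective and low-rank cases still require an argument.
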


Next result is due to Bhatwadekar-Keshari \cite[Lemma 4.5]{BK}.

\begin{lemma}\label{4.5}
Let $A$ be a ring with dim $A/\mathcal J(A)=r$, where $\mathcal J(A)$ is the Jacobson radical of $A$. Let $I$ and $L$ be ideals of $A[T]$ such that $L\subset I^2$ and $L$ contains a monic polynomial. Let $P'$ be a projective $A[T]$-module of rank $\geq r+1$. Then any surjection $\phi:P'\oplus A[T] \surj I/L$ can be lifted to a surjection $\Phi:P'\oplus A[T]\surj I$ with $\Phi(0,1)$ a monic polynomial.
\end{lemma}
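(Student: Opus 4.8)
The plan is to first lift $\phi$ to an honest homomorphism into $I$, measure the failure of surjectivity by a Nakayama argument, and then kill that failure by a general--position (Eisenbud--Evans type) argument carried out after reducing modulo a monic polynomial. Write $Q=P'\oplus A[T]$, a projective $A[T]$-module of rank $\ge r+2$. Since $Q$ is projective and the restriction $I\surj I/L$ is surjective, I would first lift $\phi$ to a homomorphism $\psi:Q\to I$ with $\psi\equiv\phi \pmod L$, and set $I'=\psi(Q)$, so that $I'+L=I$. As $L\subset I^2$, this gives $I=I'+I^2$, whence $I/I'=I\cdot(I/I')$; the determinant trick then produces $e\in I$ with $(1-e)I\subseteq I'$ and $e^2\equiv e \pmod{I'}$, so that $I=I'+A[T]e$. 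Thus $\Phi$ is surjective onto $I$ exactly when its image absorbs $e$, and the whole problem becomes that of adjusting $\psi$ within $L$ so as to capture $e$.

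Next I would normalise the free coordinate. Writing $b=\psi(0,1)\in I$ and $\theta=\psi|_{P'}$, so $I'=\theta(P')+A[T]b$, I would replace $b$ by $b+h\,T^{N}$ for a monic $h\in L$ and $N\gg 0$; this keeps $\psi\equiv\phi\pmod L$ while making $b$ monic, and it renders the conclusion ``$\Phi(0,1)$ monic'' automatic provided the subsequent correction touches only the $P'$-coordinate. Surjectivity of $\Phi=(\theta',b)$ onto $I$ is then equivalent to surjectivity of the induced map $\overline{\theta'}:P'/bP'\to I/bA[T]$ over the ring $D:=A[T]/(b)$. Because $b$ is monic, $D$ is a finite (indeed free) $A$-algebra, $A\hookrightarrow D$ is integral, and every maximal ideal of $D$ contracts to a maximal ideal of $A$; hence $\mathcal{J}(A)D\subseteq\mathcal{J}(D)$ and $D/\mathcal{J}(D)$ is a quotient of $(A/\mathcal{J}(A))[T]/(\overline b)$, which is finite over $A/\mathcal{J}(A)$. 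Consequently $\dim D/\mathcal{J}(D)\le r$, so the dimension of the maximal spectrum of $D$ is at most $r$, while $\operatorname{rank}(P'/bP')=\operatorname{rank}(P')\ge r+1$.

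This brings the problem into the range of the Eisenbud--Evans/Plumstead moving lemma, which I expect to be the crux. Over $D$ I have $\overline\theta(P'/bP')+\overline L=I/bA[T]$ with $\overline L\subseteq (I/bA[T])^2$, and $\operatorname{rank}(P'/bP')$ strictly exceeding the dimension of the maximal spectrum of $D$. The standard general--position argument then yields $\overline\delta\in\Hom(P'/bP',\overline L)$ such that $\overline\theta+\overline\delta$ is surjective onto $I/bA[T]$: one improves the image one basic element at a time, using the surjection $\overline L\surj (I/bA[T])/\overline\theta(P'/bP')$ to keep the corrections inside $\overline L$, using $\overline L\subseteq(I/bA[T])^2$ to lift surjectivity across $V(I)$ by Nakayama, and using $\operatorname{rank}>\dim$ of the maximal spectrum of $D$ to avoid the finitely many ``bad'' maximal ideals by prime avoidance. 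Lifting $\overline\delta$ to $\delta\in\Hom(P',L)$ (possible since $P'$ is projective and $L\surj\overline L$) and setting $\theta'=\theta+\delta$, I would take $\Phi=(\theta',b):P'\oplus A[T]\to I$. By construction $\Phi\equiv\phi\pmod L$, $\Phi(0,1)=b$ is monic, and $\Phi$ is surjective onto $I$ because $\overline{\theta'}$ is.

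The delicate point throughout is the general--position step, together with the bookkeeping that the effective dimension governing it is $r=\dim A/\mathcal{J}(A)$ rather than $\dim A[T]/L$ or $\dim A[T]$. The reduction modulo the monic $b$ is precisely what lowers the relevant dimension to the maximal--spectrum dimension of $D$, and it is this lowering that makes the rank hypothesis $\operatorname{rank}(P')\ge r+1$ exactly sufficient to complete the lift.
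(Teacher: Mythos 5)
Your proof is correct and follows essentially the same route as the proof of this lemma in Bhatwadekar--Keshari \cite[Lemma 4.5]{BK}, which the present paper cites without reproducing: one makes the free coordinate $b=\Phi(0,1)$ monic by adding $hT^N$ with $h\in L$ monic, passes to the integral extension $D=A[T]/(b)$ whose maximal spectrum has dimension at most $r$, and applies the rank $\geq r+1$ lifting result over $D$ to correct $\theta$ by a homomorphism into $L$. The ``general position'' step you sketch is precisely the cited \cite[Lemma 3.4]{BK}, so nothing new is needed there.
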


The following result is due to Mandal-Murthy  \cite[Lemma 3.1]{MMu}.

\begin{lemma}\label{3.11}
 Let $A$ be an affine $\ol\BF_p$-algebra of dimension $d \geq 1$ and
 $J$ be a complete intersection ideal of height $d.$ If $J = (a_1,\ldots,a_{d+1})$, then there exist $(\lambda
_1,\ldots,\lambda_{d+1}) \in Um_{d+1}(A)$ such that $\sum _{i=1} ^ {d+1} \lambda_ia_i = 0.$
\end{lemma}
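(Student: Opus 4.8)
The plan is to first produce a relation among the $a_i$ that is unimodular modulo $J$, and then to upgrade it to a genuinely unimodular relation; the second step is where the hypothesis on the base field is essential. Since $\hh J=d=\dim A$, the quotient $A/J$ is a zero-dimensional affine $\ol\BF_p$-algebra, hence Artinian, and because $J$ is a complete intersection of height $d$ the conormal module $J/J^2$ is free of rank $d$ over $A/J$. The images $\ol a_1,\ldots,\ol a_{d+1}$ generate $J/J^2$, so they define a surjection $(A/J)^{d+1}\surj J/J^2\iso (A/J)^d$ whose kernel is projective of rank $1$ over the Artinian ring $A/J$, and therefore free of rank $1$. A generator of this kernel is a unimodular row $(\ol\lambda_1,\ldots,\ol\lambda_{d+1})\in Um_{d+1}(A/J)$ satisfying $\sum_i\ol\lambda_i\ol a_i=0$ in $J/J^2$. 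Lifting the $\ol\lambda_i$ arbitrarily to $\lambda_i\in A$ gives $\sum_i\lambda_i a_i\in J^2$, and writing this element as $\sum_i\nu_i a_i$ with $\nu_i\in J$ (possible since $J^2$ is generated by the products $a_ia_j$) and replacing $\lambda_i$ by $\lambda_i-\nu_i$, I obtain an exact relation $\sum_i\lambda_i a_i=0$ with $(\lambda_1,\ldots,\lambda_{d+1})$ still unimodular modulo $J$.

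It remains to modify $(\lambda_1,\ldots,\lambda_{d+1})$ inside the syzygy module $K=\ker(A^{d+1}\surj J)$, so as to preserve the exact relation, until it becomes unimodular in $A$. Writing $I=(\lambda_1,\ldots,\lambda_{d+1})$, unimodularity modulo $J$ says precisely $I+J=A$, so the zero locus $V(I)$ is disjoint from the finite set $V(J)$ and thus lies in the open set $D(J)=\Spec A\setminus V(J)$. Over $D(J)$ the row $(a_1,\ldots,a_{d+1})$ is unimodular, $K$ restricts to a projective module of rank $d$, and $(\lambda_i)$ is a section of it; a general-position argument (moving the section by elements of $K$) lets me arrange that this section vanishes in codimension $d$, so that $V(I)$ is a finite set of closed points, each with residue field $\ol\BF_p$.

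The main obstacle is the final step of clearing this finite zero locus, and it is here that the hypothesis that $A$ is affine over $\ol\BF_p$ is indispensable. The obstruction to making the rank-$d$ section $(\lambda_i)$ nowhere vanishing is a top Euler class supported on the finitely many points of $V(I)$, and over $\ol\BF_p$ this obstruction vanishes. Concretely this rests on Suslin's theorem that every unimodular row of length $d+1$ over a $d$-dimensional affine $\ol\BF_p$-algebra is completable, which in turn exploits that $\ol\BF_p$ is algebraically closed so that Suslin's factorial ($d!$-th power) trick applies and the relevant Mennicke type obstruction is trivial. Using this I can perform the elementary modifications inside $K$ needed to remove the remaining zeros, producing $(\lambda_1,\ldots,\lambda_{d+1})\in Um_{d+1}(A)$ with $\sum_i\lambda_i a_i=0$. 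I expect the bookkeeping in the reduction modulo $J^2$ to be routine, and the genuine difficulty to be concentrated in this use of the arithmetic of $\ol\BF_p$.
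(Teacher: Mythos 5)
The paper itself gives no proof of this lemma---it is quoted from Mandal--Murthy \cite[Lemma 3.1]{MMu}---so I can only judge your argument on its own terms. The first half is sound: since $A/J$ is Artinian and $J/J^2$ is free of rank $d$ (a point that does deserve a word, as it uses that $J$ is locally generated by a regular sequence; this is automatic in the paper's applications, where $J$ is a product of distinct smooth maximal ideals), the kernel of $(A/J)^{d+1}\surj J/J^2$ is a free rank-one direct summand, a generator gives $(\ol\lambda_1,\ldots,\ol\lambda_{d+1})\in Um_{d+1}(A/J)$, and your correction by elements $\nu_i\in J$ produces an exact syzygy that is still unimodular modulo $J$. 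Up to here the argument is complete and correct.

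The gap is in the second half, which is where the entire content of the lemma lives. Two concrete problems. First, the ``general position'' step moves $\lambda$ inside $K=\ker(A^{d+1}\surj J)$, which is not a projective module (it is locally free on $D(J)$ but in general not along $V(J)$), and $D(J)$ is not affine; to run an Eisenbud--Evans argument you must first pass to a single affine chart $A_s$ with $s\in J$ chosen so that $V(O(\lambda))\subset D(s)$, perturb by elements of $s^NK$, and verify that the perturbation stays in $K$ and preserves unimodularity modulo $J$---none of which is carried out. Second, and more seriously, the final step is an assertion rather than an argument. The theorem you invoke (completability of unimodular rows of length $d+1$ over $d$-dimensional affine $\ol\BF_p$-algebras) says that stably free modules of rank $d$ are free; it does not by itself allow you to replace the \emph{given} section $\lambda$ of the rank-$d$ module $K_s$, with its finite zero locus, by a unimodular one, much less by one agreeing with $\lambda$ near $V(J)$ so that both the exact relation $\sum_i\lambda_ia_i=0$ and the comaximality of $(\lambda_1,\ldots,\lambda_{d+1})$ with $J$ survive. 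What is actually needed is the vanishing of the top obstruction group $F^dK_0$ over $\ol\BF_p$ (equivalently, the existence of unimodular elements in rank-$d$ projectives, i.e.\ the Mohan Kumar--Murthy--Roy and Krishna--Srinivas results the paper cites elsewhere) \emph{together with} a transitivity or subtraction principle that moves the specific section $\lambda$ while fixing it along $V(J)$. As written, ``over $\ol\BF_p$ this obstruction vanishes\ldots\ Using this I can perform the elementary modifications inside $K$'' is essentially the lemma restated, not a proof of it.
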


%%%%%%%%%%%%%%%%%%%%%%%%

%\section{Main Theorems}

\section{Roitman's Question : Proof of Theorem \ref{roitman}} 

\begin{theorem}\label{main}
Let $A$ be a ring of dimension $d$ containing an infinite field $k$ and $P$ be a projective $A[T]$-module of rank $n$ with $2n\geq d+3$. Assume that the singular locus of Spec$(A)$ is a closed set $V(\mathcal J)$ with ht$(\mathcal J)\geq d-n+2$. Let $f\in A[T]$ be a monic polynomial such that $Um(P_f)\neq \gv$. Then $Um(P)\neq \gv$.
\end{theorem}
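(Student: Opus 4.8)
The plan is to deduce the statement from the patching criterion of Proposition \ref{KZ}, applied with $R=A$ and $J=\mathcal{J}$, after first converting the hypothesis $Um(P_f)\neq\gv$ into a concrete surjection onto an ideal. By Lemma \ref{3.1}, since $Um(P_f)\neq\gv$ for a monic $f$, there is a surjection $\alpha:P\surj I$ onto an ideal $I\subset A[T]$ of height $\geq n$ containing a monic polynomial; this is precisely hypothesis (4) of Proposition \ref{KZ}. The singular-locus assumption supplies the standing extendedness requirement of that proposition: for $s\in\mathcal{J}$ one has $D(s)\subseteq\Spec(A)\setminus V(\mathcal{J})$, so $A_s$ is regular, and hence $P_s$ is extended from $A_s$ by Lindel's theorem.

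Next I would verify the three remaining hypotheses through dimension counts, the key numerical input being $\dim(A/\mathcal{J})\leq d-\mathrm{ht}(\mathcal{J})\leq n-2$. First, $\dim\big((A/\mathcal{J})[T]\big)\leq n-1 < n=\mathrm{rank}(P/\mathcal{J}P)$, so $Um(P/\mathcal{J}P)\neq\gv$ by Plumstead \cite{Pl}, giving hypothesis (1). Second, $\mathrm{rank}\big(P/(T,\mathcal{J})P\big)=n\geq\dim(A/\mathcal{J})+2$, so the elementary group acts transitively on the unimodular elements, giving hypothesis (3). Third, an ideal of height $n-1=\mathrm{rank}(P)-1$ in $(A/\mathcal{J})[T]$ has height equal to $\dim\big((A/\mathcal{J})[T]\big)$, so a standard change-of-variable argument over the infinite field $k$ produces an automorphism carrying it to an ideal containing a monic, giving hypothesis (2). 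With all four hypotheses in place, Proposition \ref{KZ} yields that the natural map $Um(P)\surj Um(P/TP)$ is surjective.

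It then remains to produce a single unimodular element of the $A$-module $P/TP$. Reducing $\alpha$ modulo $T$ gives a surjection $\overline\alpha:P/TP\surj I_0$, where $I_0\subseteq A$ is the image of $I$ under $T\mapsto 0$; since $T$ is a nonzerodivisor and $\mathrm{ht}(I+(T))\geq\mathrm{ht}(I)\geq n$, the ideal $I_0$ has height $\geq n-1$, and the presence of a monic in $I$ keeps this fibre under control. The task is to complete $\overline\alpha$ to a surjection onto $A$, that is, to show the associated obstruction vanishes in the range $2n\geq d+3$ under the singular-locus hypothesis. I expect this to be the main obstacle: unlike conditions (1)--(3), it cannot be unconditional, since the real-sphere example of the introduction exhibits a rank-$n$ module with $2n\geq d+3$ and empty singular locus yet no unimodular element. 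Consequently the argument must genuinely exploit that $I$ carries a monic polynomial, feeding it through the completion and Euler-class results of the same circle (cf.\ Lemma \ref{4.5} and the machinery behind Theorem \ref{aaa}).

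Once $Um(P/TP)\neq\gv$ is established, the surjectivity furnished by Proposition \ref{KZ} immediately gives $Um(P)\neq\gv$, completing the argument. As an alternative to the final step one may patch directly: Lemma \ref{3.2} turns the monic surjection into an automorphism isotopic to the identity over the monic locus, and Quillen's Lemma \ref{Q} glues it to the extended trivialization over the regular locus $D(\mathcal{J})$; this is the local-to-global mechanism that Proposition \ref{KZ} internalizes, and it is the device I would fall back on if the obstruction computation in the crux step proves awkward to carry out cleanly.
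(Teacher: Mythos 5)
There is a genuine gap, and you have in fact flagged it yourself: after invoking Proposition \ref{KZ} with $R=A$ and $J=\mathcal{J}$ to get surjectivity of $Um(P)\to Um(P/TP)$, the entire difficulty is concentrated in showing $Um(P/TP)\neq\emptyset$, and your proposal leaves that step as an expectation (``I expect this to be the main obstacle\dots'') rather than an argument. Since $Um(P)\neq\emptyset$ trivially implies $Um(P/TP)\neq\emptyset$, your reduction is essentially circular in difficulty: producing a unimodular element in the rank-$n$ $A$-module $P/TP$ with $n\le d$ is exactly as hard as the theorem itself, and nothing in the Euler-class machinery you gesture at delivers it in this generality. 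The paper never proves $Um(P/TP)\neq\emptyset$; it sidesteps it. The key device you are missing is the choice of the smaller ideal $J=(I\cap A)\cap\mathcal{J}$ (not $\mathcal{J}$), where $\Phi:P\surj I$ comes from Lemma \ref{3.1}. One first localizes at $1+J$: over $R=A_{1+J}$ the ideal $JR$ lies in the Jacobson radical and $\dim(R/JR)\le n-2$, so $Um(\widetilde P/T\widetilde P)\neq\emptyset$ is \emph{automatic} (unimodular elements lift along $R\to R/J$), and Proposition \ref{KZ} applied over $R$ yields $Um(P_{1+J})\neq\emptyset$ with no obstruction computation at all.

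What then remains --- and what your fallback paragraph only vaguely points at --- is the descent from $A_{1+J}$ to $A$. This is the real content of the paper's Step 2: choose $a\in J$ with $b=1+a$ so that the data over $A_{1+J}$ spreads out to $A_b$; use Lemma \ref{4.5} to lift $\phi:P\surj I/I^2T$ over $A_b$ to a surjection $(\psi,h(T))$ with $h$ monic; note that $a\in I\cap A$ forces $I(0)_a=A_a$, so $\overline\Phi_a:P_a/TP_a\surj A_a$ is already onto the unit ideal, while $a\in\mathcal{J}$ makes $A_a$ regular so that Popescu gives extendedness over $A_{ab}$; then Lemma \ref{3.2} and the extendedness produce an automorphism of $P_{ab}$ isotopic to the identity connecting the two surjections, and Lemma \ref{Q} splits it so the surjections onto $A_b[T]$ and $A_a[T]$ patch over $\operatorname{Spec}A=D(a)\cup D(b)$ to a surjection $P\surj A[T]$. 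Your cover ``monic locus versus $D(\mathcal{J})$'' is not the right one, and without the intersection with $I\cap A$ in the definition of $J$ the crucial identity $I(0)_a=A_a$ is unavailable. So the proposal correctly assembles the preliminary reductions but omits the two ideas that actually prove the theorem.
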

\begin{proof}
Since $Um(P_f)\neq \gv$ and $A$ contains an infinite field $k$, by (\ref{3.1}), we get a surjection $\Phi:P\surj I$, where $I$ is an ideal of $A[T]$ of height $\geq n$ containing a monic polynomial. If ht$\,I>n$, then $I=A[T]$. Hence $Um(P)\neq \gv$ and we are done. So we assume ht$\,I=n$. The map $\Phi$ induces two surjections $\phi: P\surj I/(I^2T)$ and $\overline \Phi: P/TP\surj I(0)$. 
Let $J=(I\cap A)\cap \mathcal J$ and write $R=A_{1+J}$ and $\wt P=P_{1+J}$. 

{\bf Step 1.} We will show that $Um(\wt P)\neq \gv$ by 
verifying the conditions of (\ref{KZ}) for the ideal $JR$ of $R$.
\begin{enumerate}
\item For any $s \in J$, $A_s$ is a regular ring containing an infinite field $k$. Thus by Popescu \cite{P}, $\wt P_s$ is extended from $R_s$.
\item Since $n-1\geq d-n+2$, height of $J$ is $\geq d-n+2$, so dim$(R/J)\leq d-(d-n+2)= n-2$. By Plumstead \cite{Pl}, $\wt P/J\wt P$ has a unimodular element.
\item Since dim$(R/J)\leq n-2$, any ideal of $(R/J)[T]$ of height $\geq n-1$ contains a monic polynomial in $T$.
\item $EL(\wt P/(T,J)\wt P)$ acts transitively on $Um(\wt P/(T,J) \wt P)$, by Bass cancellation  theorem \cite{Ba}.
\item By hypothesis, there exists a monic polynomial $f\in R[T]$ such that $Um(\wt P_f)\neq \gv$.
\end{enumerate}
 
By (\ref{KZ}), $Um(\wt P)\ra Um(\wt P/T\wt P)$ is surjective. 
 Since $\wt P/T\wt P$ is a projective $R$-module of rank $n$ and $J$ is contained in the Jacobson radical of $R$ with dim$(R/J)\leq n-2$, we get $Um(\wt P/T\wt P)\neq \gv$.  Therefore,  $Um(\wt P=P_{1+J})\neq \gv$. 
 
 {\bf Step 2.} Write $P_{1+J}=Q\oplus A_{1+J}[T]$. By (\ref{4.5}), the surjection
 $\phi\otimes A_{1+J}[T] : Q\oplus A_{1+J}[T] \surj I_{1+J}/(I^2T)_{1+J}$ can be lifted to a surjection
 $\Psi=(\psi,h(T)): Q\oplus A_{1+J}[T] \surj I_{1+J} $
 such that $h(T)$ is a monic polynomial in $A_{1+J}[T]$. The surjection $\Psi$ induces the surjection
 $\overline \Psi : Q/TQ\oplus A_{1+J} \surj I(0)_{1+J}$. Hence $\overline \Phi \otimes A_{1+J}=\overline \Psi$. We can find $a\in J$ such that if $b=1+a$, then there exist a projective $A_b[T]$-module $Q_1$ with $h(T)\in A_b[T]$ monic such that
 \begin{enumerate}
 \item $Q_1\otimes A_{1+J}=Q$,
 \item $P_b=Q_1\oplus A_b[T]$,
 \item $\Psi=(\psi,h(T)):P_b\surj I_b$ is a surjection,
 \item $\overline \Phi \otimes A_b= \Psi \otimes A_b[T]/(T)$.
 \end{enumerate}
 Let $pr_2:Q_1\surj A_b[T] \ra A_b[T]$ be the second projection. Since $a\in J$, $I(0)_a=A_a$ and hence $\overline \Phi_a : P_a/TP_a \surj A_a$ is a surjection. Further, since $A_a$ is a regular ring containing a field $k$, by Popescu \cite{P}, $(Q_1)_a$ is extended from $A_{ab}$. Thus there exist a projective $A_{ab}$ module $Q_2$ such that $(Q_1)_a=Q_2[T]$. Thus $P_{ab}=Q_2[T] \oplus A_{ab}[T]$.
 
 Consider two unimodular elements $\Psi_a =(\psi,h(T))_a$ and $(pr_2)_a$ of $P_{ab}^*=(Q_2[T]\oplus A_{ab}[T])^*$. Since $h(T)$ is a monic polynomial in $A_{ab}[T]$, by (\ref{3.2}), $\Psi_a =(\psi,h(T))_a$ and $(pr_2)_a$ are isotopically connected. Since $A_a$ is regular, by Popescu \cite{P}, kernel of $\Psi_a$ is an extended projective module. Therefore, there exist $\Theta \in Aut(Q_2[T]\oplus A_{ab}[T])$ such that $\Theta(0)=Id$  and $\Psi_a\circ \Theta = \Psi_a(0)\otimes A_{ab}[T]=\overline \Phi_{ab} \otimes A_{ab}[T]$. Thus $\Psi_a$ and $\overline \Phi_{ab}\otimes A_{ab}[T]$ are isotopically connected. Therefore, there exist $\Gamma \in Aut(Q_2[T]\oplus A_{ab}[T])$ such that $\Gamma$ is isotopic to identity and $(\overline\Phi_{ab} \otimes A_{ab}[T])\circ \Gamma=(pr_2)_a$.
 
 By (\ref{Q}), $\Gamma=\Omega'_b\circ \Omega_a$ where $\Omega\in Aut(P_b)$ and $\Omega'\in Aut(P_a)$.
 Hence we have surjections $\Delta_1=pr_2\circ \Omega^{-1}:P_b\surj A_b[T]$ and $\Delta_2:P_a\surj A_a[T]$ such that $(\Delta_1)_a=(\Delta_2)_b$. Therefore, they patch up to yield a surjection $\Delta:P\surj A[T]$. This proves the result. $\hfill \square$
\end{proof}

\section{Generalization of Mandal-Murthy : Proof of Theorem \ref{mmu}}

\begin{theorem}\label{main2}
Let $A$ be an affine $\ol \BF_p$-algebra of dimension $d\geq 2 $ and $I,L$ be ideals of $A$ with $L\subset I^2$. Let $P$ be a projective $A$-module of rank $d$ and $\phi:P\surj I/L$ be a surjection. Then $\phi$ can be lifted to a surjection $\Phi:P\surj I$.
\end{theorem}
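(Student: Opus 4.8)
The plan is to lift the surjection $\phi:P\surj I/L$ in two stages, reducing the problem to the case already controlled by the available lemmas. The main structural tool will be Lemma~\ref{4.5}, which lifts a surjection onto $I/L$ to a genuine surjection onto $I$, but that lemma requires the target ideal to contain a monic polynomial and is naturally stated over a polynomial ring. Since here $A$ is a bare affine algebra with no polynomial variable present, the first thing I would do is introduce a variable: replace $A$ by $A[T]$, extend $P$ to $P[T]$, and work with the ideals $IA[T]$ and an auxiliary ideal so as to import the monic-polynomial machinery. Concretely, I would aim to produce, after a Swan--Bertini / general position argument, a sublift onto a suitable quotient so that the hypotheses of Lemma~\ref{4.5} are met with $P' \oplus A[T]$ in the role of the ambient module.

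The key arithmetic input specific to $\ol\BF_p$ is Lemma~\ref{3.11} (Mandal--Murthy): for a complete intersection ideal $J$ of height $d$ generated by $d+1$ elements, the relation vector among the generators can be chosen unimodular. I expect this to be precisely what replaces the hypothesis $L=I^2$ in the original Mandal--Murthy theorem. The strategy is therefore: first use a moving lemma to replace $I$ (modulo controlling the behaviour on $L$) by an ideal that is, up to the relevant quotient, a complete intersection of height $d$, or at least an ideal whose obstruction to lifting is measured by a class that Lemma~\ref{3.11} can annihilate. Then the unimodular relation furnished by Lemma~\ref{3.11} would let me adjust a naive set-theoretic lift of the generators of $\phi$ into an honest surjection $\Phi:P\surj I$, killing the discrepancy that lives in $I/I^2$ (or $I/L$).

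In more detail, the steps I would carry out are: (i) first dispose of the case $\operatorname{ht} I > d$, where $I$ is the whole ring or the zero locus is empty so the statement is vacuous or trivial; (ii) reduce to $\operatorname{ht} I = d$, so that $\Spec(A/I)$ is zero-dimensional and $P$ has rank equal to $\operatorname{ht} I$, putting us in the generic-splitting situation where $P$ surjects onto $I$ precisely when a single obstruction class vanishes; (iii) choose a set-theoretic lift of the $d$ generators of $I/L$ coming from $\phi$, producing generators $a_1,\dots,a_d$ of $I$ together with one extra generator coming from $L\subset I^2$, so that $I=(a_1,\dots,a_{d+1})$ is exhibited as a complete intersection of height $d$; (iv) apply Lemma~\ref{3.11} to obtain $(\lambda_1,\dots,\lambda_{d+1})\in Um_{d+1}(A)$ with $\sum \lambda_i a_i = 0$; (v) use this unimodular relation to modify the chosen lift by an elementary automorphism so that the lifted map factors through $P$ as a surjection rather than merely surjecting after reduction modulo $L$.

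The hard part will be step (iii)--(v): translating the abstract surjection $\phi$ onto the \emph{module} quotient $I/L$ into a concrete complete-intersection presentation to which Lemma~\ref{3.11} applies, and then verifying that the adjustment by the unimodular relation genuinely repairs the lift globally rather than only locally. The subtlety is that $P$ need not be free, so the unimodular vector from Lemma~\ref{3.11} must be used to construct an automorphism of $P\oplus A$ (or of $P$ after a stably-free manoeuvre) carrying the free-module relation into an adjustment of the map $P\surj I/L$; controlling that one has to keep track of the chosen splitting and ensure the final map lands in $I$ with full image. I would expect this gluing of the free-module computation onto the projective module $P$, together with the passage between $L$ and $I^2$, to be where essentially all of the work of the original Mandal--Murthy argument (now with $L\subset I^2$ in place of $L=I^2$) must be re-examined, and where Lemma~\ref{4.5} is invoked to finish once the obstruction has been made to vanish.
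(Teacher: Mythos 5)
Your proposal correctly isolates Lemma~\ref{3.11} as the arithmetic heart of the matter, but the surrounding strategy diverges from what can actually work, in two concrete ways. First, Lemma~\ref{4.5} cannot be the finishing tool. It lives over a polynomial ring and requires the ideal $L$ to contain a \emph{monic} polynomial; if you pass from $A$ to $A[T]$ and extend $I$ and $L$, the extended $L$ contains no monic polynomial, and there is no natural auxiliary ideal to supply one. Worse, Lemma~\ref{4.5} needs $\mathrm{rank}\,P' \geq \dim(A/\mathcal J(A)) + 1$; for an affine algebra this is $d+1$, whereas the whole point of Theorem~\ref{main2} is the critical case $\mathrm{rank}\,P = d$, which sits exactly one below the range where the general lifting lemmas apply. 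The paper's proof of this theorem never introduces a variable $T$ and never invokes Lemma~\ref{4.5}; it stays entirely inside $A$.

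Second, Lemma~\ref{3.11} is not applied to $I$ over $A$ with $d+1$ generators as in your step (iii): the ideal $I$ need not be a height-$d$ complete intersection, and a unimodular relation among $d+1$ generators of $I$ does not connect back to the rank-$d$ module $P$. What the paper does instead is: (a) replace $\phi$ by a lift $\phi'$ with $\phi'(P) = I \cap J$, where $J$ is a residual ideal of height $d$ that, after Swan--Bertini, is a product of distinct smooth maximal ideals; (b) use Mohan Kumar--Murthy--Roy and Murthy (resp.\ Krishna--Srinivas for $d=2$) to show $J$ is a complete intersection and $P$ has a unimodular element; (c) cut by a general hypersurface $h \in J$ with $h - 1 \in L\mathcal O\mathcal J$, so that $\ol A = A/hA$ is a \emph{smooth} affine $\ol\BF_p$-algebra of dimension $d-1$ and --- via the divisibility of $F^1K_0$ of smooth affine curves over $\ol\BF_p$ (Murthy) plus Suslin cancellation --- $\ol P$ is \emph{free} of rank $d$; (d) only then apply Lemma~\ref{3.11}, in dimension $d-1$, to the $d = (d-1)+1$ generators $\phi'(\ol p_i)$ of the height-$(d-1)$ complete intersection $\ol J$, obtaining $(\ol\lambda_i) \in Um_d(\ol A)$ with $\sum \ol\lambda_i \phi'(\ol p_i) = 0$; this gives a unimodular element $p' = \sum \lambda_i p_i$ of $P$ with $\phi'(p') = ah$, and the lift is repaired by redefining $\Phi(p') = a$. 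The freeness of $\ol P$, which you correctly flag as the hard part but for which you offer no mechanism, is precisely what steps (b)--(c) and the $K_0$-divisibility argument are there to secure; without it the unimodular row from Lemma~\ref{3.11} cannot be converted into a unimodular element of $P$.
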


\begin{proof}
If $R$ is a ring and $Q$ is a projective $R$-module, then $Um(Q)\neq \gv$ if and only if $Um(Q_{red})\neq \gv$, where $Q_{red}=Q\otimes A_{red}$ and $A_{red}=A/nil(A)$ is the reduced ring. Therefore, it is enough to assume that the ring $A$ is reduced. We will give the proof in steps.

{\bf Step 1.} 
If $\phi' : P \ra I $ is a lift of $\phi$, then $\phi'(P)+L = I$. There exist $c \in L$ such that  $c(1-c) \in \phi'(P)$ and  $(\phi'(P),c) = I.$  By Swan Bertini's theorem \cite[Theorem 1.3, 1.4]{S}, there exist  $\psi \in P^*$ such that ht$(\phi'+c\psi )(P)_c = d$ and $(A/(\phi'+c\psi )(P))_c$ is reduced.  Replacing $\phi'$ with $\phi'+c\psi$, we may assume that  ht$(\phi'(P)_c) = d$ and $(A/\phi'(P))_c$ is reduced. If $J = (\phi'(P),1-c)$, then  $\phi'(P) = I\cap J=IJ$. Since $J_c=\phi'(P)_c$ and $(J,c) = A$, thus we get ht$J =$ ht$J_c= d$ and $A/J$ is reduced.  Hence
$J$ is a product of distinct maximal ideals, say $J=\prod_1^n \mathfrak m_i$, where $\mathfrak m_i$'s are maximal ideals of $A.$

The surjection $\phi':P\surj IJ$ induces a surjection $\ol \phi':P/JP \surj J/J^2$. If $\mathfrak p \supset J$ is a prime ideal, then $\ol \phi'_{\mathfrak p}$ lifts to a surjection $P_{\mathfrak p}\surj J_{\mathfrak p}$. 
 Since $A$ is a reduced ring, using prime avoidance lemma, we may assume that $J_p$ is generated by a regular sequence of $d$ elements. Consequently, $J$ is a local complete intersection ideal. 
 Hence $J$ is a product of distinct smooth maximal ideals $\mathfrak m_i.$ If $\mathcal{J} $ is the ideal defining  the singular locus of $A,$  then $J+\mathcal{J} = A.$

By Mohan Kumar-Murthy-Roy \cite[Theorem 3.6]{MMR} for $d\geq 3$ and Murthy \cite[Corollary 3.4]{Mu} for $d=2$,  $J$ is a complete intersection ideal. By Mohan Kumar-Murthy-Roy \cite[Theorem 3.7]{MMR} for $d\geq 3$ and Krishna-Srinivas \cite{KS} for $d=2$, $P$ has a unimodular element, i.e.  $P \simeq P' \oplus A p_d$ with rank $P' = d-1.$ As $\phi'$ induces an isomorphism of $P/JP \simeq J/J^2,$ we have $\phi'(p_1), \cdots ,\phi'(p_{d-1}), \phi'(p_d)$ is a base for $J$ modulo $J^2$ for some  $p_1,\cdots,p_{d-1}\in P'$.

Let $\mathcal{O}$ denotes the image of the map $p_1 \wedge p_2 \ldots \wedge p_d : \wedge^n P^* \ra A$. Then $\mathcal{O}/(J\cap \mathcal{O})  \simeq  A/J.$ Thus $J+\mathcal{O}=A.$  Since $J$ is comaximal with $L,\mathcal{O}$ and $\mathcal{J}$, if $\mathcal T = L\mathcal{O}\mathcal{J}$, then $J+\mathcal T = A.$ 

{\bf Step 2.} 
We will prove the followings.
\begin{enumerate}
\item There exist  $h\in J$ with $h-1 \in \mathcal T$ such that $A/Ah$ is a smooth of dimension $d-1.$ 
\item  If  bar denote going modulo the ideal $(h)$, then $\ol{J}$ is a complete intersection  ideal of height $d-1.$ Also $\phi'$ induces a surjection $\ol {P} \surj \ol {J}.$  
\item $\ol {P}$ is free with basis $\ol p_1,\ldots,\ol p_d.$ 
\end{enumerate}

{\it Case 1.} $d\geq 3$.
Since $J  + \mathcal T = A$, we get $J^2+\mathcal TJ = J.$ Since  $ \phi'(p_1), \cdots ,\phi'(p_d) $ is a basis for $J$ modulo $J^2,$ then there exist $g_1,\ldots,g_d \in \mathcal TJ$ which forms a  basis for $J$ modulo $J^2$  and $\phi'(p_i) - g_i \in J^2.$
% $ (\bar{g_1},\ldots,\bar{g_d})$ is a base for $J$ modulo $J^2,$ where $f'(p_i) - g_i \in J^2.$
Let $c \in J^2 $ be such that $c-1 \in \mathcal T.$ If $l_d= g_d +c \in J,$ then $l_d -1 \in \mathcal T.$ Applying Swan's Bertini \cite{S}, there exist $a \in J^2\mathcal T$ such that if $h = l_d+a$, then $A/Ah$ is smooth outside the singular locus of $A$ and dim $A/Ah =d-1$. Note $h-1= l_d -1+a \in \mathcal T \subset L \subset I,$ hence $h$ is comaximal with the ideal $\mathcal J$ defining the singular locus of $A$. Therefore $A/Ah $ is smooth. This proves $ (1)$.

Since  $J$ is a product of distinct smooth maximal ideals and $\ol{A}$ is smooth, we get $\bar{J}$ is also product of distinct smooth maximal ideal. Therefore, by Mohan Kumar-Murthy-Roy \cite[Theorem 3.6]{MMR}, $\bar{J}$ is a complete intersection of height $d-1 \geq 2.$ Moreover, $\phi' (P) =  I \cap J$ and  $h-1 \in I$ implies $\bar{\phi'}(\bar{P'}) = \bar{J}.$ This proves $ (2)$.

The construction of $h$ gives a  surjection
$f : P' \oplus Ap_d  \surj J/J^2$ defined by  $f= \phi'$ on $P'$ and $f (p_d) = h.$ Since ${\phi'(p_d)} \equiv {h}$ modulo $J^2$, $f$ induces a surjection  $\ol f:\ol P' \surj \ol{J}/\ol J^2$, where bar denote modulo $(h)$.  Since rank $\bar{P'} = d-1 \geq 2$,  by Mandal-Murthy \cite[Theorem 3.2]{MMu},  $\ol {f}$ has a surjective lift $\ol P' \surj \ol J$. 
We have $\ol{J}$ is complete intersection of height $d-1$ and is a product of distinct smooth maximal ideals over $\ol F_p$. Hence by Murthy \cite[Corrollary 2.5]{Mu},  there exist  $\ol y_1, \ldots , \ol y_{d-2} \in \ol {J}$ such that $\ol{A}/(\ol y_1, \ldots , \ol y_{d-2})$ is a smooth complete intersection curve. Let ``tilde" denote going modulo $(\ol y_1, \ldots , \ol y_{d-2})$. By Murthy \cite[Lemma 2.10]{Mu},  pic$(\wt{\ol{A}})=F^1 K_0 (\wt{\ol{A}})$ is a divisible group. Since $\wt {\ol J}$ a principal (invertible) ideal, we get $\wt{\ol{J}} = \wt {\ol K} ^{(d-2)!}$ for  a complete intersection  ideal $\ol K \subset \ol {A}$ with  ht $\ol K$ = $d-2.$
Therefore, we have $\ol{J} = (\ol y_1, \ldots , \ol y_{n-2})+ \ol K ^{(d-2)!}$ and a surjection $\ol{P}' \surj \ol{J}.$ Then invoking the arguments in the proof of Murthy \cite[Theorem 2.2]{Mu}, we get  $(\ol{P}') - (\ol{A}^{d-1}) = - (\ol A/\ol K).$ Since $\ol K$ is a complete intersection, by \cite[Corollary 3.4]{Mu},  we have  $(\ol{P}') - (\ol{A}^{d-1}) =0$. Thus $\ol{P}'$ is stably free $\ol{A}$ module of rank $d-1.$ By Suslin's cancellation theorem \cite{Su}, $\ol{P}'$ is free. Hence, $\bar{P}$ is  free. This proves $ (3)$.
 
{\it Case 2.} $d=2$.
Given that $J$ is a complete intersection ideal of ht $J = 2$. Let $J = (h_1,h_2).$ Since $J+\mathcal T=A$, $(\hat h_1,\hat h_2) \in Um_2(\hat A)$, where $\hat A=A/\mathcal T$. Applying Mohan Kumar-Murthy-Roy \cite[Theorem 2.4]{MMR}, we get $(h_1',h_2') \in Um_2(A)$ such that $\hat h_1=\hat {h_1'}$ and $\hat h_2=\hat{h_2'}.$ Let $\beta \in SL_2(A)$ be such that $(h_1',h_2') \beta = (0,1).$ Replacing $(h_1,h_2)$ by  $(h_1,h_2)\beta$, we may assume $\hat h_2 =1$. Thus $h_2-1 \in \mathcal{T}  \subset L.$ Further, we may assume $h=h_2$ is non-zero divisor by Suslin-Vaserstein \cite[Lemma 9.2]{SuV}. Note that $\ol{P}=P/hP$ is free $\ol{A}$-module with basis  $\ol p_1, \ol p_2$ and $\ol {J} = (\ol {h_1}) = (\phi'(\ol p_1), \phi'(\ol p_2)).$
% where $\phi'(\bar{p_1} = a_1), \phi'(\bar{p_2} = a_2).$

{\bf Step 3.} 
We have $\ol{A}$ is a smooth affine $\ol \BF_p$-algebra of  dimension $d-1$, $\ol{J}$ is complete intersection of height $d-1$ and $\ol P$ is free, so  $\ol{J} = (\phi' (\ol p_1),\ldots,\phi' (\ol p_d)).$ 
By (\ref{3.11}), there exist $(\ol \lambda_1, \ldots, \ol\lambda_d) \in Um_d(\ol A)$ such that $\sum_1^d \ol \lambda_i \phi'(\ol p_i) = 0.$ Since $\sum_1^d \ol \lambda_i \ol p_i \in Um(\ol{P}),$ 
by Mohan Kumar-Murthy-Roy \cite[Theorem 2.4]{MMR}, there exist a $p' \in Um(P)$ with $\bar{p'} = \sum_1^d \ol\lambda_i \ol p_i) \in Um(\ol{P}).$ Thus $P = P'' \oplus Ap'$ and $\phi(p') = ah$ for some $a\in A.$  
%Since $J$ is a smooth ideal of height $J$ = rank $P,$ it is easy to see that $Aa+J = A.$
If $\mathfrak{m}$ is a smooth  maximal ideal containing $(J,a)$, then $ah \in \mathfrak{m}^2$, as  $h \in J \subset \mathfrak{m}.$ By Chinese remainder theorem, a basis of $J/J^2$ induces a basis of  $\mathfrak{m}/\mathfrak{m}^2.$ Since $ah \in J $ is an element of basis of $J/J^2$, but image of $ah$ is zero modulo $\mathfrak m^2$. Thus we get a contradiction and hence
$Aa+J = A.$

 Now $ah = \phi'(p') \in IJ$ and $h-1 \in L\subset I$ gives $a \in I.$ In view of $I/\phi'(P)= I/IJ=A/J,$ we have $I = (\phi'(P), Aa).$
 Define $\Phi:P = P'' \oplus Ap' \surj I $ by letting $\Phi= \phi'$ on $P''$ and $\Phi(p') = a.$ Then $\Phi$ is surjective  lift of $\phi'.$
 This completes the proof.
$\hfill \square$
\end{proof}
%%%%%%%%%%%%%%%%%%

As an application of (\ref{main2}), we prove the following result which extends Bhatwadekar-Keshari \cite[Lemma 4.4]{BK} in case of an affine $\ol \BF_p$-algebra.

\begin{corollary}\label{4.41}
Let $C$ be an affine $\ol \BF_p $-algebra and $A=C_{1+K}$, where $K$ is an ideal of $C$ with dim $A/K=r \geq 2$. Let $P $ be a projective $A$-module of rank $m\geq r$. Let $I$ and $L$ be  ideals of $A$ with $L \subset I^2$. Let $\phi : P\surj I/L$ be a surjection. Then $\phi$ can be lifted to a surjection $\Psi : P \surj I$.
\end{corollary}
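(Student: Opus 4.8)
The plan is to transfer the lifting problem to the quotient $A/K$, to solve it there with Theorem \ref{main2}, and then to lift the resulting surjection back to $A$ using that $K$ lies in the Jacobson radical of $A$. Two structural facts drive this. First, $A/K\cong C/K$ is an affine $\ol\BF_p$-algebra of dimension $r$, so Theorem \ref{main2} is available over it. Second, since $A=C_{1+K}$, every maximal ideal of $A$ contains $K$; that is, $K$ is contained in the Jacobson radical $\mathcal J(A)$, and consequently any ideal $J$ of $A$ with $J+K=A$ must equal $A$. Denote reduction modulo $K$ by a bar.

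For the first stage I would consider $\bar\phi:\bar P\surj \bar I/\bar L$. Here $\bar L\subseteq \bar I^2$ and $\mathrm{rank}\,\bar P=m\ge r=\dim A/K$. When $m=r$, Theorem \ref{main2} applies directly and lifts $\bar\phi$ to a surjection $\bar\Psi:\bar P\surj \bar I$. When $m>r$ the rank exceeds the dimension, so the same lift exists by the classical theory of efficient generation of ideals (Plumstead \cite{Pl}); thus in every case I obtain a surjection $\bar\Psi:\bar P\surj \bar I$ lifting $\bar\phi$.

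For the second stage I would first choose any lift $\phi':P\to I$ of $\phi$, so that $\phi'(P)+L=I$. Since $\bar\Psi$ and $\bar{\phi'}$ both lift $\bar\phi$, their difference is a map $\bar P\to \bar L$, which I lift to a map $\delta:P\to L$ (possible as $P$ is projective and $L\surj\bar L$); then $\Psi_0:=\phi'+\delta:P\to I$ is again a lift of $\phi$ whose reduction modulo $K$ is the surjection $\bar\Psi$. In particular $\Psi_0(P)+L=I$ and $\Psi_0(P)+(K\cap I)=I$. Now I run the opening move of Theorem \ref{main2} for $\Psi_0$: using $\Psi_0(P)+L=I$ with $L\subseteq I^2$, there is $c\in L$ with $c(1-c)\in\Psi_0(P)$ and $(\Psi_0(P),c)=I$, so that $J_0:=(\Psi_0(P),1-c)$ satisfies $\Psi_0(P)=I\cap J_0$ and $I+J_0=A$. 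Since $\Psi_0(P)\subseteq J_0$ and $K\cap I\subseteq K$, the relation $I=\Psi_0(P)+(K\cap I)$ gives $I\subseteq J_0+K$, whence $A=I+J_0\subseteq J_0+K$; by the second structural fact $J_0=A$ and therefore $\Psi_0(P)=I\cap J_0=I$. Thus $\Psi:=\Psi_0$ is the required surjective lift.

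The genuinely hard content has already been isolated in Theorem \ref{main2}; here the remaining points are bookkeeping. The step I would watch most carefully is the compatibility in the second stage: one must produce a single map $\Psi_0$ that simultaneously lifts $\phi$ modulo $L$ and reduces to $\bar\Psi$ modulo $K$, which is exactly why the correction $\delta$ is taken with values in $L$ (so that it does not disturb the reduction of $\phi$) while realizing the difference $\bar\Psi-\bar{\phi'}$ modulo $K$. The only other point to secure is the $m>r$ case over $A/K$; if one prefers to avoid invoking efficient generation, one can instead peel off free summands of $P$ over $A$ (available by Bass's theorem \cite{Ba}, since $m>r\ge\dim A/\mathcal J(A)$) to reduce at the outset to $m=r$, where Theorem \ref{main2} is applied directly.
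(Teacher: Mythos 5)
Your proposal is correct and follows essentially the same route as the paper: reduce modulo $K$, apply Theorem \ref{main2} over $A/K\cong C/K$, and lift back using $K\subseteq\mathcal J(A)$; your explicit construction of the compatible lift $\Psi_0$ via a correction with values in $L$, and the $(c,J_0)$-comaximality argument, simply make precise the two containments $\Phi(P)+L=I$ and $\Phi(P)+(I\cap K)=I$ on which the paper's terser maximal-ideal argument rests. The only cosmetic point is that for $m>r$ the natural reference is \cite[Lemma 3.4]{BK} (the general-ring case with rank exceeding the dimension, which the paper itself invokes) rather than Plumstead, but the mathematical content is unaffected.
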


\begin{proof}
Follow the proof of Bhatwadekar-Keshari \cite[Lemma 3.4]{BK} where it is proved for general ring in the case $m\geq r+1$. Let $\ol A=A/K$ and let bar denote reduction modulo $K$. Then $\phi$ induces a surjection $\ol \phi : \ol P \surj
\ol I/\ol L$. By (\ref{main2}), $\ol \phi$ can be lifted to a surjection $\ol \Phi: \ol P \surj \ol I$. Let $\ol\Phi$ be a lift of $\ol\Phi$. Since $\ol I=(I+K)/K=I/(I\cap K)$, we have $\Phi(P)+(I\cap K)=I$. We also have $\Phi(P)+L=I$. Since $K$ is contained in Jacobson radical of $A$, any maximal ideal of $A$ containing $\Phi(I)$ contains $I$. Thus $\Phi(P)=I$ and $\Phi$ is a surjective lift of $\phi$. 
$\hfill \square$
\end{proof}

\begin{corollary}\label{4.51}
Let $C$ be an affine $\ol \BF_p $-algebra and $A=C_{1+K}$, where $K$ is an ideal of $C$ with dim $A/K=r \geq 2$. Let $I$ and $L$ be ideals of $A[T]$ such that $L\subset I^2$ and $L$ contains a monic polynomial. Let $P'$ be a projective $A[T]$-module of rank $m\geq r$. Let $\phi:P'\oplus A[T] \surj I/L$ be a surjection. Then we can lift $\phi$ to a surjection $\Phi:P'\oplus A[T] \surj I$ with $\Phi(0,1)$ a monic polynomial.
\end{corollary}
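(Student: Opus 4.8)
The plan is to run the proof of Lemma~\ref{4.5} essentially verbatim, exploiting the fact that its rank hypothesis $\operatorname{rank} P' \geq r+1$ is used at exactly one place: upgrading a mere lift of $\phi$ to a genuine surjection onto the relevant ideal, via the non-relative lifting lemma of Bhatwadekar--Keshari, whose general-ring form needs a projective summand of rank $\geq r+1$. Over the ring $A = C_{1+K}$ that input is supplied instead by Corollary~\ref{4.41} (which rests on Theorem~\ref{main2}), and this delivers the same conclusion already at rank $\geq r$. Substituting Corollary~\ref{4.41} for the general lemma throughout lowers the rank bound from $r+1$ to $r$, which is exactly the improvement claimed here.

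First I would record the bookkeeping that makes Corollary~\ref{4.41} applicable. Writing $\ol A = A/K$, every element of $1+K$ maps to $1$ in $C/K$, so $\ol A = C_{1+K}/KC_{1+K} = C/K$ is an affine $\ol\BF_p$-algebra of dimension $r \geq 2$; the same holds for the finite $\ol A$-algebras obtained by killing a monic polynomial. Next I would lift $\phi$ to an honest map $\Phi_0 = (\alpha, g) : P' \oplus A[T] \to I$, so that $\Phi_0(P'\oplus A[T]) + L = I$, and use the monic polynomial $f \in L \subseteq I$ to replace $g$ by a monic polynomial modulo $L$ (a transvection of the free summand $A[T]$, legitimate because $f$ lies in the image of $\Phi_0$); this arranges $\Phi_0(0,1)$ monic from the outset. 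The monic polynomial also plays a structural role: modulo it the ring $A[T]$ is finite over $A$, so the ideal-theoretic data live effectively in dimension $r = \dim \ol A$ rather than in dimension $r+1 = \dim \ol A[T]$, and this is what allows the rank-$m$ summand $P'$ to be corrected to a surjection by Corollary~\ref{4.41} after reduction modulo $K$.

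The main obstacle will be the final passage from a lift that is surjective modulo $K[T]$ to a genuine surjection onto $I$ over $A[T]$. In Corollary~\ref{4.41} this step was automatic: since $K \subseteq \mathcal J(A)$, every maximal ideal of $A$ containing the image contains $I$, forcing equality. Over $A[T]$ the analogue fails, because $K[T]$ is \emph{not} contained in the Jacobson radical of $A[T]$, so no naive Nakayama argument is available; overcoming this is precisely where the monic polynomial is indispensable. Concretely, writing $D = \Phi(P'\oplus A[T])$ for the final lift $\Phi$, the ideal $D$ contains the monic polynomial $\Phi(0,1)$, so $A[T]/D$ is a finite $A$-module; hence every maximal ideal $\mathfrak M \supseteq D$ contracts to a maximal ideal of $A$, which contains $K$, and therefore $\mathfrak M \supseteq K[T]$. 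Together with $D + (K[T] \cap I) = I$ (the content of surjectivity modulo $K$) this forces $I \subseteq \mathfrak M$ for every such $\mathfrak M$.

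To close, I would combine this with $D + L = I$ and $L \subseteq I^2$: the finitely generated module $M = I/D$ then satisfies $M = IM$, so the determinant trick yields $s \in 1 + I$ with $sI \subseteq D$; for any $\mathfrak M \supseteq D$ we have $I \subseteq \mathfrak M$ by the previous step, hence $s \notin \mathfrak M$ and $D_{\mathfrak M} = I_{\mathfrak M}$, while for $\mathfrak M \not\supseteq D$ the localizations agree trivially. Thus $D = I$ and $\Phi : P' \oplus A[T] \surj I$ is the desired surjective lift with $\Phi(0,1)$ monic. The conceptual crux is the surjectivity passage just described; the remaining care goes into threading the monic polynomial through the reduction so that Corollary~\ref{4.41} genuinely applies to $P'$ at rank $r$, rather than into the concluding surjectivity argument, which is routine once the monic condition is in place.
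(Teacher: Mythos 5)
Your proposal is correct and follows essentially the same route as the paper, whose entire proof is the instruction to rerun the argument of \cite[Lemma 4.5]{BK} with Corollary~\ref{4.41} substituted for the general-ring lifting lemma at the one point where rank $\geq r+1$ was needed. Your additional details --- using the monic in $L$ to make $\Phi(0,1)$ monic, noting that reduction modulo a monic brings the data down to a ring of the form $C'_{1+K}$ of relative dimension $r$ where Corollary~\ref{4.41} applies at rank $m\geq r$, and recovering surjectivity over $A[T]$ via $K\subseteq \mathcal J(A)$, the monic, and the determinant trick --- are a faithful (indeed more explicit) rendering of that same argument.
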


\begin{proof}
Follow the proof of Bhatwadekar-Keshari \cite[Lemma 4.5]{BK} where it is proved for general ring in the case $m\geq r+1$ and use (\ref{4.41}).
$\hfill \square$
\end{proof}
  
%%%%%%%%%%%%%%%%%%%%
  
\section{Roitman's question for $\ol {\mathbb F}_p$-algebras : Proof of Theorem \ref{mmu1}}

\begin{theorem}\label{main31}
Let $A$ be an affine $\ol\BF_p$-algebra of dimension $d$ and $P$ be a projective $A[T]$-module of rank $n$ which is extended from $A$ with  $2n\geq d+2$. Let $f\in A[T]$ be a monic polynomial such that $Um(P_f)\neq \gv$. Then $Um(P)\neq \gv$.
\end{theorem}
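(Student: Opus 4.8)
The plan is to follow the proof of Theorem~\ref{main} almost verbatim, making exactly the two substitutions that the $\ol\BF_p$-hypotheses permit: replacing Lemma~\ref{4.5} by its sharper analogue Corollary~\ref{4.51}, and replacing the appeals to regularity (via Popescu) by the hypothesis that $P$ is extended. Since $\ol\BF_p$ is infinite, Lemma~\ref{3.1} applies and gives a surjection $\Phi:P\surj I$ onto an ideal $I\subset A[T]$ of height $\geq n$ containing a monic polynomial; if ht$\,I>n$ then $I=A[T]$ and we are done, so assume ht$\,I=n$, and as in Theorem~\ref{main} let $\Phi$ induce $\phi:P\surj I/I^2T$ and $\ol\Phi:P/TP\surj I(0)$. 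Write $P=P_0[T]$ with $P_0$ projective of rank $n$ over $A$. The first key point is the choice of localising ideal: in Theorem~\ref{main} one sets $J=(I\cap A)\cap\mathcal J$, the factor $\mathcal J$ being needed only so that $\wt P_s$ is extended for $s\in J$. Since $P=P_0[T]$ is already extended, $P_s$ is extended from $A_s$ for \emph{every} $s$, so I would simply take $J=I\cap A$, $R=A_{1+J}$, $\wt P=P_{1+J}$. Because $I$ contains a monic and ht$\,I=n$, the ideal $I\cap A$ has height $\geq n-1$, whence by $2n\geq d+2$,
$$\dim(R/JR)=\dim(A/J)\leq d-n+1\leq n-1.$$
This is one larger than the bound $n-2$ available in Theorem~\ref{main}, and everything below is arranged to absorb this loss.

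In Step~1 I would verify the hypotheses of Proposition~\ref{KZ} for $JR\subset R$ to conclude $Um(\wt P)\neq\gv$. The extension hypothesis holds as noted; condition~(1) follows from Plumstead~\cite{Pl}, since the rank $n$ of $\wt P$ exceeds $\dim(R/JR)$; condition~(4) is the given monic $f$. The two conditions that feel the tighter dimension are (2) and (3). For~(2), an ideal of $(R/J)[T]$ of height $n-1$ is now of height only $\dim(R/J)$, not $\dim(R/J)+1$, so it need not already contain a monic; I would instead produce the required $\ol\sigma$ by a general-position argument over the infinite field $\ol\BF_p$, the relevant variety having dimension $\leq 1$ and hence not dominating $\Spec(R/J)$. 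For~(3) I would need transitivity of the elementary group $EL$ on $Um$ of a rank-$n$ projective module over $R/JR$, a ring of dimension $n-1$; this is the case rank $=\dim+1$, which lies just outside the range rank $\geq\dim+2$ covered by Bass's theorem, and here I would invoke the improved transitivity available over $\ol\BF_p$ (the same characteristic-$p$ phenomenon underlying Lemma~\ref{3.11} and Theorem~\ref{main2}). Granting these, Proposition~\ref{KZ} makes $Um(\wt P)\to Um(\wt P/T\wt P)$ onto, and $Um(\wt P/T\wt P)\neq\gv$ again by Plumstead, so $Um(\wt P)\neq\gv$.

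In Step~2 I would globalise from $1+J$ to $A$ as in Theorem~\ref{main}. Lifting $\phi\otimes R[T]$ through Corollary~\ref{4.51} (which, unlike Lemma~\ref{4.5}, needs only rank $\geq r=\dim(R/J)$, matched here by $\mathrm{rank}\,Q=n-1$) gives $\Psi=(\psi,h):Q\oplus R[T]\surj I_{1+J}$ with $h$ monic, and this spreads out over $A_b$, $b=1+a$, $a\in J$. The step where Theorem~\ref{main} uses regularity of $A_a$—to extend $(Q_1)_a$ and split the kernel—I would replace by the observation that $a\in J=I\cap A\subset I(0)$, so $I(0)_a=A_a$ and $\ol\Phi_a:(P_0)_a\surj A_a$ is a genuine surjection. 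Localising once more, $I_{ab}=A_{ab}[T]$ and $(P_0)_{ab}=\ker(\ol\Phi_{ab})\oplus A_{ab}$, yielding the extended decomposition $P_{ab}=Q_2[T]\oplus A_{ab}[T]$ needed to run Lemma~\ref{3.2}. Thus $\Psi_a$ is isotopically connected to the extended projection $\ol\Phi_{ab}\otimes A_{ab}[T]$, and Quillen's splitting (Lemma~\ref{Q}) lets the surjections $pr_2\circ\Omega^{-1}:P_b\surj A_b[T]$ and $\ol\Phi_a\otimes A_a[T]:P_a\surj A_a[T]$ patch into a surjection $\Delta:P\surj A[T]$, so $Um(P)\neq\gv$.

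The main obstacle is Step~1, condition~(3): transitivity of $EL$ on the unimodular elements of a rank-$n$ projective module over $R/JR$ at the critical value $\dim(R/JR)=n-1$. This is exactly where the weakening of $2n\geq d+3$ to $2n\geq d+2$ must be paid for, and where the hypothesis that $A$ is an affine $\ol\BF_p$-algebra (rather than a general ring with an infinite field) is genuinely used. The substitutions in Step~2 are, by comparison, routine once one notices that $a\in I(0)$ supplies the extended splitting that regularity supplied in Theorem~\ref{main}.
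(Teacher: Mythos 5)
There is a genuine gap in your Step~1, and it stems from modelling the argument on Theorem~\ref{main} rather than on the proof the paper actually follows, namely that of Bhatwadekar--Keshari \cite[Theorem 5.3]{BK}. Because $P=P_0[T]$ is extended, Proposition~\ref{KZ} is both unnecessary and unusable here. It is unnecessary because its conclusion --- surjectivity of $Um(P)\ra Um(P/TP)$ --- is a tautology for extended modules, and the actual content of Step~1 is simply that $Um((P_0)_{1+J})\neq \gv$: since $(P_0)_{1+J}$ has rank $n>\dim(R/JR)$, Serre's theorem gives a unimodular element modulo $JR$, and it lifts because $JR$ lies in the Jacobson radical of $R=A_{1+J}$. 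It is unusable because at the critical value $\dim(R/J)=n-1$ neither condition (2) nor condition (3) of Proposition~\ref{KZ} can be verified. For (2): the automorphism $\ol\sigma$ is required to fix $T$, so it cannot be a change of variable in $T$; if $I=\mathfrak m[T]$ for a maximal ideal $\mathfrak m\subset R/J$ of height $n-1$, then $\ol\sigma(I)$ contains a monic only if $\kappa(\mathfrak m)[T]$ becomes finite over the image of $R/J$ under $\ol\sigma^{-1}$, which fails whenever $(R/J)[T]$ admits no nonconstant $T$-families of automorphisms of $R/J$ --- a general-position argument over an infinite field does not produce such $\ol\sigma$. For (3): you need $EL$-transitivity on $Um$ of a rank-$n$ module over a ring of dimension $n-1$, i.e.\ at rank $=\dim+1$, outside the range of Bass's theorem, and the ``improved transitivity over $\ol\BF_p$'' you invoke is not a result established or cited anywhere in the paper. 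You have also misplaced where the characteristic-$p$ hypothesis enters: in the paper it is used in Step~2, through Corollary~\ref{4.51} (which rests on Theorem~\ref{main2}), precisely to accommodate $\mathrm{rank}\,Q=n-1=\dim(R/J)$; it plays no role in Step~1.

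Your Step~2 is essentially correct and matches the paper's route: replace Lemma~\ref{4.5} by Corollary~\ref{4.51}, and use $a\in I\cap A\subset I(0)$ to get $I(0)_a=A_a$ and the extended splitting $P_{ab}=Q_2[T]\oplus A_{ab}[T]$ in place of the regularity/Popescu argument (the kernel of $\Psi_a$ is then extended because the isotopy of Lemma~\ref{3.2} identifies it with $Q_2[T]$). Two smaller omissions: the paper first reduces to $n\leq d$ by Plumstead and disposes of $d=2$ and $d=3$ by citing Bhatwadekar \cite[Proposition 3.3]{B} and Bhatwadekar--Sridharan \cite[Theorem 3.4]{BR}, which is needed in particular because Corollary~\ref{4.51} requires $\dim(R/J)\geq 2$; your write-up does not address these low-dimensional cases.
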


\begin{proof}
We may assume $n\leq d$ by Plumstead \cite{Pl}. 
When $d = 2$, then $n=2$ and we are done by Bhatwadekar \cite[Proposition 3.3]{B}. When $d=3$, then $n=3$ and we are done by Bhatwadekar-Sridharan \cite[Theorem 3.4]{BR}. Therefore, we can assume $d\geq 4$ and $n\geq 3$.
The proof is exactly the same as that of Bhatwadekar-Keshari \cite[Theorem 5.3]{BK}, just use (\ref{4.51}) instead of (\ref{4.5}).
$\hfill \square$.
\end{proof}

\section{Proof of results stated in the abstract}

\begin{theorem}\label{32}
Let $A$ be a ring of dimension $d$ containing an infinite field $k$ and $P$ be a projective $A[T_1,\ldots,T_r]$-module of rank $n$. Assume one of the following holds. 
\begin{enumerate}
\item $2n\geq d+3$ and $P$ is extended from $A$.
\item $2n\geq d+2$, $A$ is an affine $\ol\BF_p$-algebra and $P$ is extended from $A$. 
\item $2n\geq d+3$ and the singular locus of Spec$(A)$ is a closed set $V(\mathcal J)$ with ht$(\mathcal J)\geq d-n+2$.
\end{enumerate}
If $Um(P_f)\neq \gv$ for some monic polynomial $f(T_r)\in A[T_1,\ldots,T_r]$, then $Um(P)\neq \gv$.
\end{theorem}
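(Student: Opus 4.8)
The plan is to re-run the one variable arguments of Theorem \ref{main} (for (3)) and of Theorem \ref{main31} and Bhatwadekar--Keshari \cite[Theorem 5.3]{BK} (for (2) and (1)), with the single variable $T_r$ in which $f$ is monic playing the role of $T$ and the remaining variables absorbed into the enlarged base $B=A[T_1,\ldots,T_{r-1}]$, which still contains the infinite field $k$. First I would dispose of the case $n>d$: a projective $A[T_1,\ldots,T_r]$-module of rank exceeding $\dim A=d$ has a unimodular element by the several variable form of Plumstead's theorem \cite{Pl}, so we may assume $n\leq d$.

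Viewing $P$ as a projective $B[T_r]$-module and using that $f(T_r)$ is monic in $T_r$, Lemma \ref{3.1} over the base $B$ yields a surjection $\Phi:P\surj I$ with $I\subseteq A[T_1,\ldots,T_r]$ of height $\geq n$ and containing a monic polynomial in $T_r$; if $\mathrm{ht}\,I>n$ then $I=A[T_1,\ldots,T_r]$ and we are done, so assume $\mathrm{ht}\,I=n$. Exactly as in \ref{main}, set $J=(I\cap A)\cap\mathcal J$ in case (3) and $J=I\cap A$ in cases (1),(2); then $\mathrm{ht}\,J\geq d-n+2$, whence $\dim(A/J)\leq n-2$ in (1),(3) and $\dim(A/J)\leq n-1$ in (2). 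Pass to $R=A_{1+J}$ and $\wt P=P_{1+J}$, and verify the hypotheses of Proposition \ref{KZ} for $\wt P$ over the base $R[T_1,\ldots,T_{r-1}]$, distinguished variable $T_r$ and ideal $J\,R[T_1,\ldots,T_{r-1}]$. The crucial point is that, although this base ring has Krull dimension growing with $r$, every condition is governed by $\dim(R/J)=\dim(A/J)$ and not by the dimension of the polynomial ring: condition (1) holds by the several variable Plumstead theorem \cite{Pl}, since $\wt P/J\wt P$ has rank $n>\dim(R/J)$; condition (2) holds by the several variable analogue of the statement that an ideal of height $\geq n-1>\dim(R/J)$ can be moved, by an automorphism fixing $T_r$, onto one containing a monic in $T_r$; condition (4) is the hypothesis; and condition (3) is the transitivity of the elementary action on $Um(\wt P/(T_r,J)\wt P)$. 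The extendedness requirement of \ref{KZ} is automatic for all $s$ in cases (1),(2), while in (3) it holds for $s\in J\subseteq\mathcal J$ by Popescu \cite{P}, since then $A_s$ is regular. Case (2) moreover replaces the lifting lemma by its $\ol\BF_p$-analogue \ref{4.51}, resting on \ref{main2} and Lemma \ref{3.11}, which is precisely what permits the weaker bound $2n\geq d+2$. This gives $Um(\wt P)\neq\gv$.

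It then remains to globalise, following Step 2 of \ref{main}. Writing $\wt P=Q\oplus R[T_1,\ldots,T_r]$, I would lift the induced surjection onto $I_{1+J}/(I^2T_r)_{1+J}$ to a surjection $(\psi,h(T_r))$ with $h$ monic in $T_r$ by the several variable form of Lemma \ref{4.5} (resp. \ref{4.51}), choose $b=1+a$ with $a\in J$, invoke Popescu \cite{P} over the regular rings $A_a$ and $A_{ab}$ to render the relevant kernels and summands extended, apply Lemma \ref{3.2} in the variable $T_r$ to connect the two projections isotopically, and split the resulting connecting automorphism as $\Omega'_b\circ\Omega_a$ by Quillen's Lemma \ref{Q}. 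Patching the two local surjections produces a global surjection $P\surj A[T_1,\ldots,T_r]$, that is, $Um(P)\neq\gv$.

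The main obstacle is that two of the one variable ingredients must be upgraded so that the governing dimension is that of the coefficient ring modulo its Jacobson radical rather than that of the polynomial ring. For condition (3) of \ref{KZ} the module $\wt P/(T_r,J)\wt P$ lives over $(R/J)[T_1,\ldots,T_{r-1}]$, whose dimension grows with $r$, so the plain Bass cancellation used in \ref{main} no longer applies and must be replaced by a several variable transitivity theorem valid once the rank exceeds $\dim(R/J)$ by two (the $\ol\BF_p$-results are what cover the tighter margin in case (2)). Similarly the lifting lemma \ref{4.5}/\ref{4.51} must be available in several variables with its rank hypothesis measured against $\dim(R/J)\leq n-2$ (resp.\ $n-1$); the presence of a monic in $T_r$ makes $R[T_1,\ldots,T_r]/L$ finite over $R[T_1,\ldots,T_{r-1}]$, which is what allows the dimension count to collapse to $\dim(R/J)$ and keeps the hypothesis $2n\geq d+3$ (resp.\ $2n\geq d+2$) sufficient uniformly in $r$.
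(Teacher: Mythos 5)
Your strategy---treating $T_r$ as the distinguished variable over the base $B=A[T_1,\ldots,T_{r-1}]$ and re-running the one-variable proof of \ref{main}---breaks down at the very first dimension count, and this is precisely the difficulty the paper's proof is organised to avoid. After Lemma \ref{3.1} you obtain $\Phi:P\surj I$ with $I\subset A[T_1,\ldots,T_r]$ of height $n$ containing a monic in $T_r$, and you assert $\mathrm{ht}(I\cap A)\geq d-n+2$, hence $\dim(A/J)\leq n-2$. For $r=1$ this follows because the monic makes $A[T]/I$ integral over $A/(I\cap A)$. For $r>1$ the monic in $T_r$ only gives integrality over $B/(I\cap B)$, and $I\cap A$ can have small height: for $A=k[x_1,\ldots,x_d]$, $r=2$ and $I=(T_1,x_1,\ldots,x_{n-2},f(T_2))$ with $f$ monic, one has $\mathrm{ht}\,I=n$ but $I\cap A=(x_1,\ldots,x_{n-2})$ of height $n-2$, which is $<d-n+2$ when $2n=d+3$; if $r\geq n$ one can even arrange $I\cap A=0$. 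Every subsequent step of your argument---Plumstead for $\wt P/J\wt P$, the monic-moving condition, the transitivity of the elementary action, the rank hypotheses in \ref{4.5} and \ref{4.51}---is driven by the bound $\dim(A/J)\leq n-2$, so the whole scheme collapses. Replacing $I\cap A$ by $I\cap B$ does not repair it: $\dim B/(I\cap B)$ grows with $r$, and $B/(I\cap B)$ is not a polynomial extension of a low-dimensional ring, so the several-variable theorems you would need do not apply to it; your closing claim that finiteness over $B$ ``collapses the dimension to $\dim(R/J)$'' is exactly the point that fails.

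The paper avoids all of this. In cases (1) and (2) it does not run the machine at all: it reduces modulo $(T_1,\ldots,T_{r-1})$, applies the one-variable theorem (\cite[Theorem 5.3]{BK}, resp.\ \ref{main31}) to $\ol P$ over $A[T_r]$, and then uses the hypothesis that $P$ is extended from $A$ to pass from $Um(\ol P)\neq\gv$ back to $Um(P)\neq\gv$---a step your proposal never invokes. In case (3) it inducts on $r$ with $T_1$, not $T_r$, as the distinguished variable: it gets $Um(P/T_1P)\neq\gv$ by induction, manufactures a monic $g\in k[T_1]$ with $Um(P_g)\neq\gv$ by passing to $S^{-1}A[T_1]$ (with $S$ the monics of $k[T_1]$) and inducting again, and then applies \ref{KZ} with $J=\mathcal J A[T_2,\ldots,T_r]$, so that the relevant quotient is the honest polynomial ring $(A/\mathcal J)[T_1,\ldots,T_r]$ over a ring of dimension $\leq n-2$, where Bhatwadekar--Roy, Suslin's lemma and Lindel's theorem supply the conditions of \ref{KZ}. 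No localisation at $1+(I\cap A)$ and no Step~2 patching occur in the several-variable case.
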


\begin{proof}
{\it Case $1,2$.} If $\ol P=P/(T_1,\ldots,T_{r-1})P$, then $\ol f\in A[T_r]$ is monic and $Um(\ol P_{\ol f})\neq \gv$. By Bhatwadekar-Keshari \cite[Theorem 5.3]{BK} in case $1$ and by (\ref{main31}) in case $2$, $Um(\ol P)\neq \gv$. Since $P$ is extended from $A$, we get $Um(P)\neq \gv$. 

{\it Case $3$.}
The case $r=1$ is (\ref{main}). Assume $r>1$ and write $R=A[T_2,\ldots,T_r]$. Since $Um(P_f)\neq \gv$ implies $Um((P/T_1P)_f)\neq \gv$, by induction on $r$, we get $Um(P/T_1P)\neq \gv$. Further if $S$ is the set of monic polynomials in $k[T_1]$, then $S^{-1}P$ is projective $B[T_2,\ldots,T_r]$-module where $B=S^{-1}A[T_1]$ is of dimension $d$ with height of $\mathcal JB\geq d-n+2$. Thus again by induction on $r$, $Um(S^{-1}P_f)\neq \gv$ implies $Um(S^{-1}P)\neq \gv$. Thus $Um(P_g)\neq \gv$ for some monic $g\in k[T_1]$. Now we will verify the conditions of (\ref{KZ}) to show that the map $Um(P)\to Um(P/T_1P)$ is surjective. In particular, $Um(P)\neq \gv$.
\begin{enumerate}
\item Since $J=\mathcal JR$ is the ideal defining singular locus of $R$, $R_s$ is regular for every $s\in J$. Hence $P_s$ is extended from $R_s$, by Popescu \cite{P}.
\item Since $R[T_1]/(J)=(A/\mathcal J)[T_1,\ldots,T_r]$ and $dim(A/\mathcal J)\leq d-(d-n+2)\leq n-2$, by Bhatwadekar-Roy \cite{BRoy}, we get $Um(P/JP)\neq \gv$.
\item Since $dim(A/\mathcal J)\leq n-2$, if $I$ is an ideal of $R[T_1]/(J)$ of height $n-1$, then by Suslin \cite[Lemma 6.2]{Su1}, there exist $\ol \sigma \in Aut_{(A/\mathcal J)[T_1]}((A/\mathcal J)[T_1,\ldots,T_r])$ of the form $\ol \sigma(T_i)=T_i+T_1^{n_i}$ for $i=2,\ldots,r$ such that $\ol \sigma(I)$ contains a monic polynomial. Clearly, $\ol\sigma$ lifts to an automorphism of $R[T_1]$. 
\item By Lindel \cite{L}, $EL(P/(T_1,J)P)$ acts transitively on $Um(P/(T_1,J)P)$.
\item $Um(P_g)\neq \gv$ for some monic $g\in R[T_1]$.
\end{enumerate}
Now we are done by (\ref{KZ}). 
$\hfill \square$
\end{proof}

\begin{corollary}\label{3.22}
Let $A$ be a ring of dimension $d$ containing an infinite field $k$ of positive charateristic $p>d$ and $P$ be a projective $A[T_1,\ldots,T_r]$-module of rank $n$ which is stably extended from $A$. Assume one of  the following holds.
\begin{enumerate}
\item $2n\geq d+3$. 
\item $2n\geq d+2$  and $A$ is an affine $\ol\BF_p$-algebra.
\end{enumerate}
 If $Um(P_f)\neq \gv$ for some monic polynomial $f(T_r)\in A[T_1,\ldots,T_r]$, then $Um(P)\neq \gv$.
\end{corollary}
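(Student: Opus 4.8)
The plan is to deduce the corollary from Theorem \ref{32} by promoting the hypothesis that $P$ is \emph{stably} extended from $A$ to the statement that $P$ is extended from $A$; the characteristic assumption $p>d$ will be used precisely to run a cancellation argument in the metastable range. Write $B=A[T_1,\ldots,T_r]$ and set $P_0=P/(T_1,\ldots,T_r)P$, a projective $A$-module of rank $n$. Since $P$ is stably extended, $P\oplus B^m$ is extended from $A$ for some $m\geq 0$, say $P\oplus B^m\cong E\otimes_A B$ with $E$ projective over $A$; evaluating at $T_1=\cdots=T_r=0$ gives $E\cong P_0\oplus A^m$, so
$$P\oplus B^m\;\cong\;P_0[T_1,\ldots,T_r]\oplus B^m.$$

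Next I would strip off the free summands one at a time. Each step is a cancellation $Q\oplus B\cong Q'\oplus B\Rightarrow Q\cong Q'$ applied to $Q=P\oplus B^{\,j}$ and $Q'=P_0[T_1,\ldots,T_r]\oplus B^{\,j}$, which have rank $n+j\geq n$; the tightest instance is the last one, at rank $n$. The input I need here is a cancellation theorem over $B$ valid in the range $2n\geq d+3$ (under hypothesis $1$), respectively $2n\geq d+2$ when $A$ is affine over $\ol\BF_p$ (under hypothesis $2$). This is exactly the range in which the assumption that $k$ has characteristic $p>d$ is essential: ordinary Bass cancellation would require rank $>\dim B=d+r$, and even its polynomial-ring refinements require rank $>d$, both far above the metastable value $n\approx d/2$, whereas the metastable statement becomes reachable because the integers up to $d$ are invertible in $k$, which is what kills the relevant obstruction group governing cancellation. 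After $m$ such steps I obtain $P\cong P_0[T_1,\ldots,T_r]$, i.e. $P$ is extended from $A$.

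Finally, with $P$ now extended from $A$ and $Um(P_f)\neq\gv$ for the monic polynomial $f(T_r)$, I would invoke Theorem \ref{32}, case $1$ under hypothesis $1$ and case $2$ under hypothesis $2$, to conclude $Um(P)\neq\gv$. The sole genuine obstacle is the metastable cancellation step over $A[T_1,\ldots,T_r]$ at rank $n$ with $2n\geq d+3$ (resp. $d+2$); this is where $p>d$ does all the work, and it is the point that must be argued or cited with care. Everything around it---the identification of the stably extended module, the telescoping cancellation, and the final appeal to Theorem \ref{32}---is formal.
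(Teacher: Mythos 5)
Your overall architecture is the same as the paper's: upgrade ``stably extended from $A$'' to ``extended from $A$'' and then invoke Theorem~\ref{32}. The identification $P\oplus B^m\cong P_0[T_1,\ldots,T_r]\oplus B^m$ (with $B=A[T_1,\ldots,T_r]$ and $P_0=P/(T_1,\ldots,T_r)P$) is formal and correct. The genuine gap is the step you yourself flag as needing care: the telescoping cancellation. You require $Q\oplus B\cong Q'\oplus B\Rightarrow Q\cong Q'$ for projective $B$-modules of rank $n$ with $2n\geq d+3$ (resp.\ $2n\geq d+2$), where $\dim B=d+r$. No such theorem is available in this generality, and the heuristic that ``$p>d$ kills the obstruction group governing cancellation'' is an assertion, not an argument: the cancellation theorems actually at hand (Bass \cite{Ba}, needing rank $>\dim B$; Suslin \cite{Su}, needing rank $\geq\dim$ for affine algebras over algebraically closed fields) are nowhere near the range $n\approx d/2$ for an arbitrary ring $A$ containing an infinite field of characteristic $p>d$, and the metastable-range cancellation results that do exist concern affine algebras over special fields, not polynomial extensions $A[T_1,\ldots,T_r]$ of a general $A$. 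As written, the key step of your proof is unproven, and there is no reason to expect it to be provable by cancellation.

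The paper closes exactly this step by citing Roitman \cite[Corollary 6]{Ro1}: when $A$ contains a field of characteristic $p>d=\dim A$, every stably extended projective $A[T_1,\ldots,T_r]$-module is extended from $A$. Roitman's proof is not a cancellation theorem in the metastable range; it exploits the polynomial structure itself (elementary automorphisms over the polynomial extension together with the invertibility of $1,2,\ldots,d$) and imposes no lower bound on the rank. In particular, the hypotheses $2n\geq d+3$ (resp.\ $2n\geq d+2$) play no role in the extendedness step --- they are needed only so that Theorem~\ref{32} applies afterwards, so your framing of the rank bound as the ``metastable cancellation range'' misattributes its purpose. If you replace your cancellation step with the citation of \cite[Corollary 6]{Ro1}, the remainder of your argument coincides with the paper's two-line proof.
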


\begin{proof}
By Roitman \cite[Corollary 6]{Ro1}, $P$ is extended from $A$. Use (\ref{32}) to complete the proof.
$\hfill \square$
\end{proof}
%%%%%%%%%%%%%%%%%%%%%%%

\begin{remark}\label{rem1}
Let $A$ be an affine $\ol\BF_p$-algebra of dimension $d$ and $P$ be a projective $A[T]$-module of rank $n$ with $2n \geq d+2$.  Assume that the ideal $\mathcal J$ defining the singular locus of Spec$(A)$ has height $\geq d-n+1$. Let $f\in A[T]$ be a monic polynomial such that $Um(P_f)\neq \gv$. Following the proof of (\ref{main}), we get a surjection $\Phi:P\surj I$ where $I$ is a height $n$ ideal of $A[T]$ containing a monic polynomial. Let $J=(I\cap A)\cap \mathcal J$. If we assume that $Um(P_{1+J})\neq \gv$, then following the proof of 
(\ref{main}) and using (\ref{4.51}) instead of (\ref{4.5}), we get $Um(P)\neq \gv$. Therefore, the natural question under above assumptions is whether $Um(P_{1+J})\neq \gv$? $\hfill \square$
\end{remark}

\begin{remark}
If the answer to above question in (\ref{rem1}) is in affirmative in the case $n=d-1\geq 3$, then we get the following corollary:
Let $A$ be a normal affine $\ol\BF_p$-algebra of dimension $d\geq 4$ and $P$ be a projective $A[T]$-module of rank $d-1$. Let $f\in A[T]$ be a monic polynomial such that $Um(P_f)\neq \gv$. Then $Um(P)\neq \gv$, since normality gives height of the ideal $\mathcal J$ defining singular locus of $A$ is $\geq 2$.
$\hfill \square$
\end{remark}

\section{Lifting surjection $A[T]^n \surj I/I^2T$.}

Let $A$ be a ring of dimension $d$ and $n$ be an integer with $2n \geq d+3$.
Bhatwadekar-Sridharan \cite{BRS} defined $n^{th}$ Euler class  group $E^n(A)$ of $A$. An element of $E^n(A)$ is a pair $(I,\omega_I),$ where $I\subset A$ is an ideal  of height $n$ and  $\omega_I:(A/I)^n \surj I/I^2$ is a surjection. It is proved  in \cite[Theorem 4.2]{BRS} that $(I,\omega_I)$ is zero in $E^n(A)$ if and only if $\omega_I$ can be lifted to a surjection $: A^n\surj I$. Mandal-Yang \cite{MY} extended the definition of Euler class group $E^s(A)$ for any $1\leq s\leq d$.

%The following result is due to Bhatwadekar-Sridharan \cite[Lemma 3.5]{BR1} when $A$ is regular. 

\begin{lemma}\label{nori}
Let $A$ be a ring containing a field $k$ and $P$ be a projective $A$-module. Assume the singular locus of $Spec(A)$ is a closed set  $V(\mathcal{J}).$ Let $I \subset A[T]$ be an ideal, $J= I\cap A \cap \mathcal{J}$ and $\ol{\phi} : P[T] \surj I/I^2 T$ be a surjection. Assume $\ol{\phi}\otimes A_{1+J}[T]$ can be lifted to a surjection $\theta :P_{1+J}[T] \surj I_{1+J}$.
Then $\ol{\phi}$ can be lifted to a surjection $\Phi : P[T] \surj I.$
\end{lemma}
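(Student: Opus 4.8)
The plan is to prove the lemma by a Quillen-style patching argument over the cover of $\Spec A$ by $D(a)$ and $D(1+a)$, for a suitably chosen $a\in J$. The force of the hypothesis $J=I\cap A\cap\mathcal J$ is that every $a\in J$ carries two independent good properties at once. Since $a\in I\cap A$ we have $a\in I$, so $I_a=A_a[T]$ (making a lift over $D(a)$ essentially free) and, crucially for the lifting, $aI_b\subseteq I_b^2$ where $b=1+a$. Since $a\in\mathcal J$, the ring $A_a$ lies in the regular locus, hence is regular and contains $k$, so by Popescu \cite{P} every projective $A_a[T]$-module is extended from $A_a$. Finally $a$ and $1+a$ are comaximal, with $D(a)\cup D(1+a)=\Spec A$, so surjections defined over $A_a[T]$ and over $A_{1+a}[T]$ that agree over $A_{a(1+a)}[T]$ will glue to a global surjection $P[T]\surj I$.

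First I would spread out the hypothesis. As $A_{1+J}=\varinjlim_{a\in J}A_{1+a}$, the surjection $\theta$ and the congruence $\theta\equiv\ol\phi \ (\mathrm{mod}\ I^2T)$ descend, after enlarging $a$, to a surjection $\theta_1:P_b[T]\surj I_b$ lifting $\ol\phi_b$, with $b=1+a$ and $a\in J$. Over $A_a[T]$ the ideal $I_a$ is the unit ideal, so $(I^2T)_a=TA_a[T]$ and $\ol\phi_a$ factors through $P_a\by{u}A_a$ with $u=\ol\phi_a(0)$; the constant surjection $\theta_2:=u\otimes A_a[T]:P_a[T]\surj A_a[T]=I_a$ is then a lift of $\ol\phi_a$ with $\theta_2(0)=u$. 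Over $A_{ab}[T]$ both $\theta_1,\theta_2$ are surjections onto $A_{ab}[T]=I_{ab}$, and since lifting $\ol\phi_{ab}$ there amounts to agreement modulo $(T)$, we get $\theta_1(0)=\theta_2(0)=u$. As $A_{ab}$ is regular, $\ker\theta_1$ is extended by Popescu \cite{P}, so exactly as in Step 2 of the proof of \ref{main} there is $\Theta\in Aut(P_{ab}[T])$ with $\Theta(0)=\mathrm{Id}$ and $\theta_1\circ\Theta=\theta_1(0)\otimes A_{ab}[T]=\theta_2$. Setting $\Gamma=\Theta^{-1}$ gives $\theta_2\circ\Gamma=\theta_1$ with $\Gamma(0)=\mathrm{Id}$ and $\Gamma$ isotopic to the identity.

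I then apply Quillen's lemma \ref{Q} to $\Gamma$ with the pair $(b,a)$ to obtain $\Gamma=\Omega'_b\circ\Omega_a$, where $\Omega\in Aut(P_b[T])$ with $\Omega\equiv\mathrm{Id}\ (\mathrm{mod}\ a)$ and $\Omega'\in Aut(P_a[T])$ with $\Omega'\equiv\mathrm{Id}\ (\mathrm{mod}\ b)$. Evaluating $\Gamma=\Omega'_b\Omega_a$ at $T=0$ shows that $\Omega(0)$ and $\Omega'(0)^{-1}$ agree over $A_{ab}$ and hence glue to an automorphism of $P$; absorbing it into the two factors, which leaves the product $\Omega'_b\Omega_a=\Gamma$ unchanged, I may further assume $\Omega(0)=\Omega'(0)=\mathrm{Id}$. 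Now define $\Phi^{(b)}:=\theta_1\circ\Omega^{-1}$ on $D(b)$ and $\Phi^{(a)}:=\theta_2\circ\Omega'$ on $D(a)$. Using $\theta_1=\theta_2\Gamma=\theta_2\,\Omega'_b\Omega_a$ one computes on the overlap $\Phi^{(b)}|_{ab}=\theta_1\Omega_a^{-1}=\theta_2\Omega'_b=\Phi^{(a)}|_{ab}$, so $\Phi^{(a)},\Phi^{(b)}$ patch to a surjection $\Phi:P[T]\surj I$.

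The main obstacle, and the only point that goes beyond the template of \ref{main}, is verifying that the glued $\Phi$ actually lifts $\ol\phi$, i.e. $\Phi\equiv\ol\phi\ (\mathrm{mod}\ I^2T)$; this is a local condition. On $D(a)$ it is automatic, since $(I^2T)_a=TA_a[T]$ and $\Phi^{(a)}(0)=\theta_2(0)\,\Omega'(0)=u$. On $D(b)$ one needs $\theta_1\circ(\Omega^{-1}-\mathrm{Id})$ to land in $(I^2T)_b$, and here the choice $a\in J$ pays off twice. The congruence $\Omega\equiv\mathrm{Id}\ (\mathrm{mod}\ a)$ from \ref{Q} together with $\Omega(0)=\mathrm{Id}$ forces every entry of $\Omega^{-1}-\mathrm{Id}$ to lie in $aTA_b[T]$, whence $(\Omega^{-1}-\mathrm{Id})(P_b[T])\subseteq aT\,P_b[T]$ and $\theta_1\big((\Omega^{-1}-\mathrm{Id})(P_b[T])\big)\subseteq aT\,I_b$; and since $a\in I_b$ we have $aI_b\subseteq I_b^2$, so $aT I_b\subseteq (I^2T)_b$. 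Thus $\Phi^{(b)}\equiv\theta_1\equiv\ol\phi\ (\mathrm{mod}\ I^2T)$ on $D(b)$, and $\Phi$ is the desired lift. I expect this congruence bookkeeping to be the crux of the argument; the comaximality of $a$ and $1+a$, the extendedness used to build $\Gamma$, and the Quillen splitting are all routine given the results already in hand.
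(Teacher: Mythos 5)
Your proof is correct and is essentially the argument the paper intends: its own proof of this lemma simply defers to the Quillen-patching proof of Bhatwadekar--Sridharan \cite[Lemma 3.5]{BR1}, noting only that for $a\in J\subseteq \mathcal J$ the ring $A_a$ is regular and contains a field, so Popescu supplies the extendedness over $A_{a(1+a)}[T]$ that regularity supplied there. Your reconstruction --- descending $\theta$ to some $A_{1+a}$, using $a\in I$ to trivialize everything on $D(a)$ and to get $aTI_b\subseteq (I^2T)_b$, building the isotopy from the extended kernel, and splitting via Lemma \ref{Q} --- is exactly that argument written out in full, including the normalization $\Omega(0)=\Omega'(0)=\mathrm{Id}$ needed for the mod-$I^2T$ bookkeeping.
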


\begin{proof}
Follow the proof of Bhatwadekar-Sridharan \cite[Lemma 3.5]{BR1} where it is proved for regular $A$. Here $s\in J,$ implies  $A_{s}$ is a regular ring containing a field. Thus by Popescu \cite{P}, projective $A_{s(1+s)}[T]$-modules are extended from $A_{s(1+s)}$. Rest of the proof is  same as in \cite[Lemma 3.5] {BR1}.
$\hfill \square$
\end{proof}

\begin{lemma}\label{4.6}
Let $A$ be a ring of dimension $d$ and $n$ be an integer with $2n\geq d+3$. Let $I \subset A[T]$ be  an ideal of height $n$ such that $I + \mathcal{J}(A) A[T] = A[T],$ where $\mathcal{J}(A)$ denotes the Jacobson radical of $A$. Assume ht$\mathcal{J}(A) \geq d-n+2.$ Let $P$ be a projective $A$ module of rank $n$ and $\phi: P[T] \surj I/I^2$ be a surjection. If the surjection $\phi \otimes A(T): P(T) \surj IA(T)/I^2A(T)$ can be lifted to a surjection from $P(T)$ to $IA(T),$ then $\phi$ can be lifted to a surjection $\Phi:P[T] \surj I.$ 
\end{lemma}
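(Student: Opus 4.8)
The plan is to lift $\phi$ to a homomorphism $\psi:P[T]\to I$, to isolate the ``excess ideal'' it produces, to use the hypothesis over $A(T)$ solely to force a monic polynomial into that ideal, and then to remove the excess ideal by a subtraction principle built from (\ref{3.2}) and (\ref{Q}). Write $\mathcal J=\mathcal J(A)$. From $\hh\mathcal J\ge d-n+2$ one gets $\dim(A/\mathcal J)\le n-2$, which is the dimension of the maximal spectrum of $A$; since $\mathrm{rank}\,P=n>n-2$, Bass's theorem \cite{Ba} gives a splitting $P=P'\oplus A$, hence $P[T]=P'[T]\oplus A[T]$ with $\mathrm{rank}\,P'=n-1$. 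Lifting a generating set of $\phi$ gives a homomorphism $\psi:P[T]\to I$ with $\psi(P[T])=I\cap I_1$, where $I+I_1=A[T]$ and $\hh I_1\ge n$; if $I_1=A[T]$ we are already done.

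The only use of the $A(T)$-hypothesis is the following. Since $\phi\otimes A(T)$ admits a \emph{surjective} lift, $\psi$ may be chosen (adjusting it by a homomorphism into $I^2$ and clearing a monic denominator) so that $\psi\otimes A(T)$ is onto $IA(T)$; then $I_1A(T)=A(T)$, that is, $I_1$ contains a monic polynomial $h$. Because $I+I_1=A[T]$ one has $I_1=(I\cap I_1)+I_1^2$, so $\psi$ induces a surjection $P[T]\surj I_1/I_1^2$. Now $h^2\in I_1^2$ is monic and $\mathrm{rank}\,P'=n-1\ge\dim(A/\mathcal J)+1$, so (\ref{4.5}) applies with the pair $(I_1,I_1^2)$ and lifts this induced surjection to a genuine surjection $\xi:P[T]\surj I_1$ with $\xi(0,1)$ a monic polynomial.

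At this point I have two surjections, $\psi:P[T]\surj I\cap I_1$ and $\xi:P[T]\surj I_1$, the second carrying a monic. The remaining step is the subtraction principle: from these two I want a surjection onto $I$ itself. I would carry it out as in Bhatwadekar--Sridharan, using the monic $\xi(0,1)$: after inverting it the ideal $I_1$ becomes the unit ideal, so $\psi$ is already onto $I$ there, and the comparison of $\psi$ with $\xi$ becomes an automorphism of a localisation of $P[T]$ that (\ref{3.2}) renders isotopic to the identity; Quillen's lemma (\ref{Q}) then splits this automorphism and patches the corrected surjections into a single $\Phi:P[T]\surj I$ lifting $\phi$.

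I expect the subtraction step to be the genuine obstacle. Quillen's lemma only splits an automorphism, so the real work is to turn the pair $(\psi,\xi)$ into an automorphism isotopic to the identity and to split it compatibly, yielding surjections onto $I$ that agree on the overlap. This is precisely where the two hypotheses are forced to cooperate: the monic in $I_1$---which exists only because the lift over $A(T)$ is surjective---is what makes both (\ref{4.5}) and the isotopy (\ref{3.2}) available, while the height bound $\hh\mathcal J\ge d-n+2$, through $\dim(A/\mathcal J)\le n-2$, is what both splits off the free summand $P=P'\oplus A$ and meets the rank hypothesis of (\ref{4.5}). Removing either input breaks the argument: without the monic neither lift exists, and without the dimension bound $P$ need not split and (\ref{4.5}) does not apply.
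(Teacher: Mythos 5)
Your argument has a genuine gap at the single step where the $A(T)$-hypothesis is consumed: the claim that $\psi$ can be adjusted ``by a homomorphism into $I^2$ and clearing a monic denominator'' so that $\psi\otimes A(T)$ becomes surjective, whence $I_1$ contains a monic. This does not work. Perturbing a lift by a homomorphism into $I^2$ preserves only the property of being a lift of $\phi$, not surjectivity: if $\Theta:P(T)\surj IA(T)$ is the given surjective lift and you write $\Theta=\psi_0\otimes A(T)+\delta/f^N$ with $\delta:P[T]\to I^2$ and $f$ monic, then every integral lift $\psi_0+g\delta$ ($g\in A[T]$) differs from $\Theta$ over $A[T]_f$ by a map into $I_f^2$, and a surjection onto $I_f$ perturbed by a map into $I_f^2$ need not remain surjective (one only recovers that its image is $I_f\cap I_1'$ for some comaximal $I_1'$); while clearing the denominator replaces $\Theta$ by $f^N\Theta$, which is no longer a lift of $\phi$. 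Producing a lift of $\phi$ \emph{defined over $A[T]$} whose residual ideal contains a monic is essentially equivalent to the lemma itself (a surjective lift is exactly one with $I_1=A[T]$), so the step is circular in effect, and it is not a general-position statement: ``$I_1$ contains a monic'' means $I_1$ avoids \emph{every} prime of $A[T]$ disjoint from the monics, which the moving lemma cannot arrange. A telling symptom is that your proof never uses the hypothesis $I+\mathcal{J}(A)A[T]=A[T]$, which is what actually drives the argument in Bhatwadekar--Keshari \cite[Lemma 4.6]{BK} that the paper follows: since $A[T]/(f)$ is integral over $A$ for $f$ monic, $\mathcal{J}(A)\cdot A[T]/(f)$ lies in the Jacobson radical of $A[T]/(f)$, so $I+\mathcal{J}(A)A[T]=A[T]$ forces $I+(f)=A[T]$ for \emph{every} monic $f$. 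It is this comaximality that produces the correct covering $\{D(f),D(1-gf)\}$ (with $1-gf\in I$, so that $I$ becomes the unit ideal on the second patch and the second projection of $P=P'\oplus A$ serves as a lift there), to which (\ref{4.5}), (\ref{3.2}) and (\ref{Q}) are then applied.

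Even granting the monic in $I_1$, your closing ``subtraction'' step is not secured: (\ref{3.2}) and (\ref{Q}) patch unimodular elements, i.e.\ surjections onto free rank-one quotients, whereas subtracting $I_1$ from $I\cap I_1$ requires a subtraction principle in the style of Lemma \ref{4.7} (which needs $I_1$ comaximal with an ideal $J^2T$ with $J\subseteq\mathcal{J}(A)$ --- a condition you have not arranged) or \cite[Prop.~3.2]{BRS} over $A[T]$ (which needs $2n\geq d+4$). What is correct in your sketch is the splitting $P=P'\oplus A$ from $\dim(A/\mathcal{J}(A))\leq n-2$, the identification $\psi(P[T])=I\cap I_1$, and the observation that $\psi$ induces a surjection onto $I_1/I_1^2$; but both the entry point (the monic in $I_1$) and the exit point (the subtraction) need the mechanism based on $I+(f)=A[T]$ rather than the one you propose.
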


\begin{proof}
Follow the proof of  Bhatwadekar-Keshari \cite [Lemma 4.6]{BK} where it is proved with the condition $ht \mathcal J(A)\geq n-1$. The similar proof works in our case. 
$\hfill\square$
\end{proof}

\begin{lemma}\label{4.7}
Let $A$ be a ring of dimension $d$ and $n$ be an integer with $2n\geq d+3.$ Let  $L\subset A$ be an ideal of height $\geq  d-n+2$ and $I,I_1 \subset A[T]$ be ideals of height $n.$ Let $P=P_1 \oplus A$ be a projective $A$-module of rank $n$. Assume $J = I \cap A \cap L \subset \mathcal{J}(A),$ where $\mathcal{J} (A)$ denotes the Jacobson radical of $A$ and $I_1 + (J^2T) = A[T].$ Let $\Phi : P[T] \surj I \cap I_1$ and $\Psi:  P[T] \surj I_1$ be two surjection with $\Phi \otimes A[T]/I_1 = \Psi\otimes A[T]/I_1.$ Then we get a surjection $\Lambda : P[T] \surj I $ such that $(\Phi - \Lambda)(P[T]) \subset I^2T.$
\end{lemma}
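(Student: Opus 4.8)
The plan is to recognise this as the relative (modulo $I^2T$) form of the subtraction principle and to prove it by the localise-and-patch method used in the proof of \ref{main}, following Bhatwadekar--Keshari.

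First I would extract from the hypotheses the comaximality of $I$ and $I_1$, which is the fact that makes $\Phi$ and $\Psi$ ``subtractable''. Since $J = I \cap A \cap L \subseteq I$, we also have $J^2T \subseteq I$; hence a prime $\mathfrak p$ of $A[T]$ containing both $I$ and $I_1$ would contain $J$, therefore $J^2T$, and therefore $I_1 + J^2T$, contradicting $I_1 + J^2T = A[T]$. Thus $I + I_1 = A[T]$ and $I \cap I_1 = II_1$, and the statement becomes: given $\Phi : P[T] \surj II_1$ and $\Psi : P[T] \surj I_1$ inducing the same surjection $P[T] \surj I_1/I_1^2$, produce $\Lambda : P[T] \surj I$ with $\Lambda \equiv \Phi$ modulo $I^2T$. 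The weaker consequence $I_1 + (T) = A[T]$ (since $J^2T \subseteq (T)$) also furnishes a polynomial in $I_1$ congruent to $1$ modulo $T$, which is what will let the $T$-direction be controlled.

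Next I would use the free summand $P = P_1 \oplus A$, so $P[T] = P_1[T] \oplus A[T]$, to put $\Psi$ in standard position: after arranging the unitary generator of $I_1$ coming from $I_1 + (T) = A[T]$, Lemma \ref{3.2} makes $\Psi$ isotopic to the second projection on the relevant localisations. The heart of the argument is the subtraction itself, carried out after localising at $S = 1+J$. Because $J \subseteq \mathcal{J}(A)$ and the height hypotheses keep $\dim(A/J) \leq n-2$ (exactly as verified in Step 1 of \ref{main}, using $2n \geq d+3$ and $\mathrm{ht}\,L \geq d-n+2$), over $A_{1+J}[T]$ one has unimodular elements and transitivity of the elementary action; cancelling the $I_1$-part of $\Phi_{1+J}$ against $\Psi_{1+J}$ then yields a surjection onto $I_{1+J}$ agreeing with $\Phi$ modulo $(I^2T)_{1+J}$, the passage from the mod-$I^2T$ datum to an honest surjection onto $I_{1+J}$ being supplied by \ref{4.5} (or \ref{4.51}). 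Spreading out, this gives the required surjection after inverting some $b = 1+a$ with $a \in J$, together with a competing surjection over $A_a$.

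Finally I would glue the two. The agreement of $\Phi$ and $\Psi$ modulo $I_1$ forces the two local surjections to differ on the overlap $A_{ab}$ by an automorphism of $P_{ab}[T]$ that is isotopic to the identity; Quillen's patching \ref{Q} splits this automorphism as $\Omega'_b \circ \Omega_a$, and the local surjections then patch to a global $\Lambda : P[T] \surj I$. The congruence $(\Phi - \Lambda)(P[T]) \subseteq I^2T$ survives because every modification is supported on the $I_1$-factor, and $I_1$ is comaximal with $I$ (and with $J^2T$), so it cannot disturb the orientation induced by $\Phi$ on $I$ modulo $I^2T$. I expect the main obstacle to be precisely this last piece of bookkeeping: arranging the local subtraction and the patching automorphism so as to obtain a genuine surjection onto all of $I$ while keeping it congruent to $\Phi$ modulo $I^2T$ simultaneously. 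It is the comaximality in the $T$-direction provided by $I_1 + J^2T = A[T]$ that reconciles these two requirements.
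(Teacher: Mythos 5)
Your first paragraph is correct and is indeed where any proof must begin: since $J\subseteq I$ we get $J^2T\subseteq I$, so $I_1+(J^2T)=A[T]$ forces $I+I_1=A[T]$ and $I_1(0)=A$, and the only role of $L$ is to guarantee $\mathrm{ht}\,J\geq d-n+2$, hence $\dim A/J\leq n-2$. But the mechanism you then propose --- the localise-and-patch argument of Theorem \ref{main}, with Lemma \ref{3.2}, isotopies, and Quillen splitting via \ref{Q} --- is not available here, and this is a genuine gap rather than a presentational one. That argument runs on the fact that for $s\in J$ the ring $A_s$ is \emph{regular}, so that Popescu makes projective $A_s[T]$-modules extended from $A_s$; this extendedness is what converts the isotopy produced by \ref{3.2} into an automorphism that \ref{Q} can split. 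In Lemma \ref{4.7} the ideal $L$ is an arbitrary ideal of height $\geq d-n+2$ with no relation to the singular locus, so there is no regularity of $A_s$ for $s\in J$ and nothing to feed into \ref{Q}. Several subsidiary steps also fail as written: localising at $1+J$ is vacuous because $J\subseteq\mathcal{J}(A)$ is already a hypothesis; Lemma \ref{4.5} cannot supply the passage from a surjection onto $I/I^2T$ to one onto $I$, since it requires the kernel ideal (here $I^2T$) to contain a monic polynomial, which is not given; and Lemma \ref{3.2} applies to a surjection onto $A[T]$, not onto the proper ideal $I_1$, and you do not indicate the reduction. Finally, the actual subtraction --- producing $\Lambda$ onto $I$ while preserving the congruence $(\Phi-\Lambda)(P[T])\subseteq I^2T$ --- is asserted rather than carried out; you yourself flag it as the main obstacle, but it is the entire content of the lemma.

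The paper's own proof is simply to invoke Bhatwadekar--Keshari \cite{BK}, Lemma 4.7, observing that the only place the implicit hypothesis $\mathrm{ht}\,\mathcal{J}(A)\geq n-1$ enters that argument is through the dimension bounds $\dim A/J\leq n-2$ and $\dim A[T]/I_1\leq n-2$, which here follow instead from $\mathrm{ht}\,L\geq d-n+2$ and $2n\geq d+3$. That argument is a direct subtraction-principle computation in the style of Bhatwadekar--Sridharan \cite{BRS}: one writes $\Psi$ with respect to the splitting $P[T]=P_1[T]\oplus A[T]$, uses Eisenbud--Evans type moving lemmas and the transitivity of the elementary group on unimodular elements over the low-dimensional quotients to trade the $I_1$-component of $\Phi$ against $\Psi$, and tracks the congruence modulo $I^2T$ throughout. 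No patching, no regularity, and no monic polynomials are involved. If you want to repair your write-up, the correct move is to abandon the Quillen-patching template and reproduce that subtraction argument, checking that each dimension count needed there is supplied by $\mathrm{ht}\,J\geq d-n+2$.
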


\begin{proof}
Follow the proof of  Bhatwadekar-Keshari \cite [Lemma 4.7]{BK} where it is proved with the implicit condition $ht \mathcal J(A)\geq n-1$. The same proof works in our case. 
$\hfill\square$
\end{proof}

When $A$ is regular containing a field $k$, the following result is proved in 
Das-Sridharan \cite[Theorem 2.11]{DRS} when $P$ is free and in
Bhatwadekar-Keshari  \cite[Proposition 4.9]{BK} in general case.

\begin{proposition}\label{monic}
Let $A$ be a ring of dimension $d$ containing a field $k$ and $n$ be an integer with $2n\geq d+3$. Assume the singular locus of $Spec(A)$ is a closed set  $V(\mathcal{J})$ with ht$\mathcal{J} \geq d-n+2.$  Let $I \subset A[T]$ be  an ideal of height $n.$ Let $P$ be a projective $A$-module of rank $n$ and $\psi : P[T] \surj I/I^2T$ be a surjection.  Assume there exist a surjection $\Psi':P[T] \otimes A(T) \surj IA(T)$ which is a lift of $\psi \otimes A(T)$.  Then $\psi$ can be lifted to a surjection $\Psi:P[T] \surj I.$
\end{proposition}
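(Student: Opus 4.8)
The plan is to follow the proof of Bhatwadekar--Keshari \cite[Proposition 4.9]{BK} for the regular case, replacing the regularity hypothesis by the singular-locus assumption together with Lemma \ref{nori}. Throughout write $J=I\cap A\cap\mathcal J$. The first step is a reduction: since $2n\geq d+3$ gives $\mathrm{ht}(I\cap A)\geq n-1\geq d-n+2$ and $\mathrm{ht}\,\mathcal J\geq d-n+2$, we have $\mathrm{ht}\,J\geq d-n+2$. By Lemma \ref{nori}, to lift $\psi$ over $A[T]$ it suffices to lift $\psi\otimes R[T]$ over $R=A_{1+J}$. Over $R$ the ideal $JR$ lies in the Jacobson radical $\mathcal J(R)$, one has $\mathrm{ht}\,\mathcal J(R)\geq d-n+2$ and hence $\dim R/\mathcal J(R)\leq n-2$, each $R_s$ with $s\in J$ is regular, and by Bass' theorem \cite{Ba} the projective module $P_R$ of rank $n>\dim R/\mathcal J(R)$ splits as $P_R=P_1\oplus R$. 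If $\mathrm{ht}\,I_R>n$ then $I_R=R[T]$ and there is nothing to prove, so I assume $\mathrm{ht}\,I_R=n$.

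The heart of the argument is the construction of a complementary ideal, and this is where the hypothesis $\Psi'$ is consumed. After clearing denominators, $\Psi'$ provides, over $R[T]_g$ for a suitable monic $g$, a surjection lifting $\psi_R$; using this I would choose a global set-theoretic lift $\Phi_0:P_R[T]\to I_R$ of $\psi_R$ that becomes surjective onto $I_R$ after inverting a monic. Setting $I'=\Phi_0(P_R[T])$ one obtains both $I'+I_R^2T=I_R$ and $I'_g=I_{R,g}$ for some monic $g$. The first relation forces $I'=I_R$ in a neighbourhood of $V(I_R)$, so the ``extra'' part $I_1$ defined by $I'=I_R\cap I_1$ is comaximal with $I_R$; the second relation forces $V(I_1)\subseteq V(g)$, so $I_1$ contains a monic polynomial. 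By a Swan--Bertini general-position argument \cite{S} I would further arrange $\mathrm{ht}\,I_1=n$, that $I_1$ is comaximal with $\mathcal J(R)$, and that $I_1+(JR)^2T=R[T]$.

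With $I_1$ in hand the two lifting lemmas finish the proof. The surjection $\Phi_0$ induces $\phi_1:P_R[T]\surj I_1/I_1^2$ (surjective because $I_R+I_1=R[T]$). Since $I_1$ contains a monic polynomial, $I_1R(T)=R(T)$, so the $R(T)$-lift hypothesis of Lemma \ref{4.6} is automatic; as $I_1$ is comaximal with $\mathcal J(R)$ and $\mathrm{ht}\,\mathcal J(R)\geq d-n+2$, Lemma \ref{4.6} yields a surjection $\Psi_1:P_R[T]\surj I_1$ lifting $\phi_1$, so in particular $\Psi_1\equiv\Phi_0$ modulo $I_1$. Now apply Lemma \ref{4.7} with $L=JR$, $\Phi=\Phi_0:P_R[T]\surj I_R\cap I_1=I'$ and $\Psi=\Psi_1:P_R[T]\surj I_1$, using $I_1+(JR)^2T=R[T]$ and $P_R=P_1\oplus R$: this produces $\Lambda:P_R[T]\surj I_R$ with $(\Phi_0-\Lambda)(P_R[T])\subseteq I_R^2T$. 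Because $\Phi_0\equiv\psi_R$ modulo $I_R^2T$, the map $\Lambda$ is a surjective lift of $\psi\otimes R[T]$, and Lemma \ref{nori} then descends it to the required lift $\Psi:P[T]\surj I$.

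The main obstacle I expect is the construction of $I_1$ in the second paragraph: one must simultaneously ensure that $I_1$ is comaximal with $I_R$, contains a monic polynomial, has height exactly $n$, is comaximal with $\mathcal J(R)$, and satisfies $I_1+(JR)^2T=R[T]$, all while keeping $\Phi_0$ a lift of $\psi_R$. It is precisely here that the $A(T)$-lift hypothesis is used, namely to force the monic in $I_1$ which makes Lemma \ref{4.6} applicable, and where the singular-locus condition $\mathrm{ht}\,\mathcal J\geq d-n+2$ together with $2n\geq d+3$ are needed to make the general-position and height counts work; the remaining steps are then formal applications of Lemmas \ref{4.6}, \ref{4.7} and \ref{nori}.
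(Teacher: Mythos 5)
Your overall skeleton (reduce to $A_{1+J}$ via Lemma \ref{nori}, split off a free summand, produce a residual ideal, then apply Lemmas \ref{4.6} and \ref{4.7}) matches the paper's architecture, but the step where you actually consume the hypothesis $\Psi'$ contains a genuine gap. You assert that one can choose a global lift $\Phi_0:P_R[T]\to I_R$ of $\psi_R$ which becomes surjective after inverting a monic, so that the residual ideal $I_1$ (with $\Phi_0(P_R[T])=I_R\cap I_1$) contains a monic polynomial. This does not follow from the existence of $\Psi'$. Any two lifts of $\psi_R$ over $R[T]_g$ differ by a homomorphism $P_R[T]_g\to (I_R^2T)_g$, and perturbing the surjection $\Psi'$ by such a homomorphism does not preserve surjectivity: the image of the perturbed map is only of the form $I_{R,g}\cap I_2$ with $I_2$ comaximal with $I_{R,g}$ but possibly proper, since $I_R^2T$ is not contained in $I_R$ times the Jacobson radical. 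So the residual ideal of a global lift need not become trivial after inverting any monic, and your sketch gives no mechanism to force this. (You flag this as ``the main obstacle,'' but it is not a technical refinement of a correct plan; it is the point where the plan breaks.)

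The paper resolves exactly this difficulty differently: it does \emph{not} try to make the residual ideal contain a monic. Instead, the moving lemma of Das--Keshari \cite[Lemma 3.1]{DK} produces a lift $\theta:P[T]\surj I''=I\cap I'$ with $I'$ of height $n$ and $I'+(J^2T)=A[T]$ (general position only, no monic), and then the hypothesis $\Psi'$ is used through the subtraction principle of Bhatwadekar--Sridharan \cite[Proposition 3.2]{BRS} over the $d$-dimensional ring $A(T)$: comparing $\theta\otimes A(T)$ with $\Psi'$ yields a surjection $\Phi':P(T)\surj I'A(T)$, which is precisely the $A(T)$-lift that Lemma \ref{4.6} needs for the ideal $I'$ (whose comaximality with $\mathcal J(A)A[T]$ comes from $I'+(J^2T)=A[T]$, not from a Swan--Bertini argument). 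After that, Lemma \ref{4.7} finishes as in your last paragraph. To repair your proof you should replace the ``monic residual ideal'' construction by this moving-lemma-plus-subtraction-over-$A(T)$ step; the rest of your argument then goes through.
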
 
 
 \begin{proof}
Let $J = I \cap A \cap \mathcal{J}$. Then ht $J \geq d-n+2$ and dim $A/J \leq n-2.$
By (\ref{nori}), we may replace $A$ by $A_{1+J}$ and assume that $J  \subset \mathcal{J}(A).$ Since $n >$  dim$A/\mathcal{J}(A),$ we may assume that $P$ has unimodular element i.e. $P \iso  P_1 \oplus A.$

Applying the moving lemma of Das-Keshari \cite[Lemma 3.1]{DK}, the surjection $\psi : P[T] \surj I/I^2T$ can be lifted to a surjection $\theta: P[T] \surj I''$ of $\psi$ such that
\begin{enumerate}
\item $I = I''+ (J^2 T)$, 
\item $I'' = I \cap I',$ where ht $I'=n$ and
\item $I' + (J^2 T) = A[T].$
\end{enumerate}
 
 The surjection $\Theta(=\theta \otimes A(T)) : P[T]\otimes A(T) \surj IA(T) \cap I'A(T)$ satisfies $\Psi'\otimes A(T)/IA(T) =\Theta\otimes A(T)/IA(T).$ Since dim $A(T) = d$,
applying Bhatwadekar-Sridharan \cite[Proposition 3.2]{BRS} to $\Theta$ and $\Psi'$, we get a surjection  $\Phi': P(T) \surj I'A(T)$ such that $\Phi'\otimes A(T)/I'A(T) =\Theta \otimes A(T)/I'A(T).$  Since $I' +\mathcal{J} (A) = A[T]$, where $\mathcal J(A)$ is the Jacobson radical of $A$, applying (\ref{4.6}) to 
the surjections $\phi \otimes A[T]/I' :P[T]/I'P[T] \surj I'/I'^2$ and $\Phi'$, we get a surjection $\Phi: P[T] \surj I'$ which is a lift of $\phi.$ Applying (\ref{4.7}) for the surjections $\phi$ and $\Phi$, we get our desired result.  $\hfill \square$
\end{proof}

%The following subtraction principle is stated in \cite[Corollary 4.11]{BK} for regular $A$. 

\begin{corollary}\label{subtraction}
Let $A$ be a ring of dimension $d$ containing an infinite field $k$ and $n$ be an integer with $2n\geq d+3$. Assume the singular locus of $Spec(A)$ is a closed set  $V(\mathcal{J})$ with ht$\mathcal{J} \geq d-n+2.$  Let $I,I' \subset A[T]$ be  comaximal ideals of height $n.$ Let $P = P_1\oplus A$ be a projective $A$-module of rank $n$. Suppose we have surjections $\Gamma: P[T] \surj I$ and $\Theta : P[T] \surj I \cap I'$ satisfying $\Gamma \otimes A[T]/I = \Theta \otimes A[T]/I.$ Then we have a surjection $\Psi :P[T] \surj I'$ such that $\Psi \otimes A[T]/I' = \Theta \otimes A[T]/I'.$
\end{corollary}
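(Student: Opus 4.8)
The plan is to prove this $A[T]$-subtraction principle by the same two-stage strategy used in the proof of Proposition \ref{monic}: first carry out the subtraction over the generic fibre $A(T)$, where the dimension drops back to $d$, and then descend the resulting surjection from $A(T)$ to $A[T]$ using Lemmas \ref{4.6} and \ref{4.7}. Throughout I would record the surjection $\omega':P[T]\surj I'/I'^2$ induced by $\Theta$; since $I+I'=A[T]$, and hence $I\cap I'=II'$, reduction of $\Theta$ modulo $I'$ does land on and surject onto $I'/I'^2$, so the target is precisely a lift $\Psi:P[T]\surj I'$ of $\omega'$ that recovers $\Theta$ modulo $I'$.

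First I would pass to $A(T)=S^{-1}A[T]$, with $S$ the set of monic polynomials, so that $\dim A(T)=d$ and the singular locus of $A(T)$ is again closed of height $\geq d-n+2$. The surjections $\Gamma\otimes A(T):P(T)\surj IA(T)$ and $\Theta\otimes A(T):P(T)\surj IA(T)\cap I'A(T)$ are onto comaximal ideals of height $n$ and agree modulo $IA(T)$, so the affine subtraction principle of Bhatwadekar--Sridharan \cite[Proposition 3.2]{BRS} (applicable since $2n\geq d+3=\dim A(T)+3$ and the singular-locus hypothesis descends) yields a surjection $\Psi':P(T)\surj I'A(T)$ with $\Psi'\otimes A(T)/I'A(T)=\Theta\otimes A(T)/I'A(T)$. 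Thus $\Psi'$ is a generic lift of $\omega'\otimes A(T)$.

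To descend $\Psi'$ to $A[T]$ I would invoke Lemma \ref{4.6}, whose output is a lift $\Phi:P[T]\surj I'$ of $\omega'$ once the existence of the generic lift is known; a final application of Lemma \ref{4.7} then corrects $\Phi$ to a surjection $\Psi$ with $(\Psi-\Theta)(P[T])\subset I'^2T$, which gives the asserted equality $\Psi\otimes A[T]/I'=\Theta\otimes A[T]/I'$. This is exactly the combination of \ref{4.6} and \ref{4.7} that closes the proof of Proposition \ref{monic}.

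The main obstacle is that Lemma \ref{4.6} requires $I'+\mathcal{J}(A)A[T]=A[T]$, i.e.\ the ideal being lifted must be comaximal with the Jacobson radical (which, after localizing, absorbs the singular-locus ideal $\mathcal{J}$), whereas here $I'$ is an arbitrary ideal comaximal only with $I$. I would therefore first reduce to this situation, either by localizing at $1+(I'\cap A\cap\mathcal{J})$ and patching as in Lemma \ref{nori}, or by using the moving lemma of Das--Keshari \cite[Lemma 3.1]{DK} to replace $I'$ by an ideal comaximal with $\mathcal{J}$ and transporting the subtraction through the comaximal piece. Making this reduction clean---so that the descent lemmas \ref{4.6} and \ref{4.7}, which are phrased for the absolute $I/I^2$ data rather than the relative $I/I^2T$ data, genuinely apply to the given $I'$---is where the real work lies; the generic step and the two descent lemmas are then formal.
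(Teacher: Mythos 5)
Your first stage is correct and is also how the paper (via Bhatwadekar--Keshari \cite[Corollary 4.11]{BK}) begins: over $A(T)$ the dimension drops to $d$, and \cite[Proposition 3.2]{BRS} applied to $\Gamma\otimes A(T)$ and $\Theta\otimes A(T)$ gives $\Psi':P(T)\surj I'A(T)$ agreeing with $\Theta$ modulo $I'A(T)$. The gap is in the descent. The paper's tool for descent, Proposition \ref{monic}, takes as input a surjection $P[T]\surj I'/I'^2T$ together with the generic lift; but $\Theta$ only provides the coarser datum $\omega':P[T]\surj I'/I'^2$. Indeed, since $I\cap I'=II'$ and $I'/II'\cong A[T]/I$, one computes $I'/(II'+I'^2T)\cong A/I(0)$, so $\Theta$ induces a surjection onto $I'/I'^2T$ only when $I(0)=A$ --- which cannot be arranged in general (e.g.\ $I=\mathfrak m A[T]$). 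Your substitute descent via \ref{4.6} and \ref{4.7} does not repair this: \ref{4.6} requires $I'+\mathcal J(A)A[T]=A[T]$, and localizing at $1+(I'\cap A\cap\mathcal J)$ produces the opposite situation (it pushes $I'\cap A\cap \mathcal J$ \emph{into} the Jacobson radical rather than making $I'$ comaximal with it); \ref{4.7} is itself a subtraction lemma whose hypothesis $I_1+(J^2T)=A[T]$ fails for $I_1=I$ whenever $I(0)$ is proper, so it cannot play the role of a final ``correction'' here (and if \ref{4.6} did apply to $\omega'$, its output would already be the desired $\Psi$, making the appeal to \ref{4.7} superfluous). You flag the reduction as ``where the real work lies,'' but neither of your two suggested reductions supplies what is actually missing.

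The missing ingredient is the fibre at $T=0$. Using that $k$ is infinite, one first changes variables so that each of $I(0)$, $I'(0)$ is either all of $A$ or a height-$n$ ideal (as in \cite[Lemma 3.2]{BR1}); in the nontrivial case one applies the $d$-dimensional affine subtraction principle over $A$ to $\Gamma(0):P\surj I(0)$ and $\Theta(0):P\surj I(0)\cap I'(0)$ to lift $\Theta$ modulo $I'(0)^2$ to a surjection $P\surj I'(0)$, and then patches this with $\omega'$ to manufacture the required surjection $\phi:P[T]\surj I'/I'^2T$. Only at that point do Proposition \ref{monic} and the generic lift $\Psi'$ combine to produce $\Psi:P[T]\surj I'$ with $\Psi\equiv\Theta$ modulo $I'^2$; this $T=0$ patching is exactly the pattern visible in the proof of Theorem \ref{main61}(2). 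So your proposal has the right generic step but locates the obstruction in the wrong place (comaximality with the Jacobson radical) and omits the fibre-at-zero subtraction and patching that make the descent machinery applicable to the prescribed ideal $I'$.
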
 
 
\begin{proof}
Follow the proof of the subtraction principle of Bhatwadekar-Keshari \cite[Corollary 4.11]{BK} where it is proved for regular $A$ and use (\ref{monic}).
\qed 
\end{proof}

% The following result extends Das-Sridharan \cite[Theorem 3.1]{DRS} where it is proved for regular $A$.

\begin{theorem} 
  Let $A$ be a ring of dimension $d$ containing an infinite field $k$ and $n$ be an integer with $2n \geq d+3.$ Assume the singular locus of $Spec(A)$ is a closed set  $V(\mathcal{J})$ with $ht\,\mathcal{J} \geq d-n+2.$ Let $I\subset A[T]$ be an ideal of height $n$ and $\omega_I:(A[T]/I)^n\surj I/I^2$ be a surjection. Then the element  $(I, \omega_I )\in E^n(A[T])$ is zero if and only if $\omega_I$ can be lifted to a surjection $\Psi : A[T]^n \surj I.$ 
\end{theorem}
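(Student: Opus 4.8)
The plan is to follow the template of Bhatwadekar-Sridharan \cite[Theorem 4.2]{BRS} and Bhatwadekar-Keshari \cite[Theorem 4.13]{BK}; the only genuinely new ingredient is that the subtraction principle and the moving lemma must be invoked in the forms valid under the weakened hypothesis height $\mathcal J \geq d-n+2$, and these are exactly (\ref{subtraction}) and the moving lemma of Das-Keshari \cite{DK} used in the proof of (\ref{monic}). Note first that, since $\omega_I$ is a surjection $(A[T]/I)^n \surj I/I^2$ and $I$ has height $n$, the ideal $I$ is locally a complete intersection of height $n$, so that $(I,\omega_I)$ is a legitimate element of $E^n(A[T])$.

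The ``if'' direction is immediate from the definition of the group: the relation subgroup of $E^n(A[T])$ is generated precisely by those pairs $(J,\omega_J)$ whose local orientation $\omega_J$ admits a global surjective lift $A[T]^n \surj J$. Hence if $\omega_I$ lifts to a surjection $\Psi : A[T]^n \surj I$, then $(I,\omega_I)$ lies in that subgroup and is therefore zero.

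For the ``only if'' direction, suppose $(I,\omega_I)=0$ in $E^n(A[T])$. Writing the free module as $A[T]^n = A[T]^{n-1}\oplus A[T]$, the subtraction principle (\ref{subtraction}) applies with $P = A^n$ and $P_1 = A^{n-1}$. Since $(I,\omega_I)$ lies in the relation subgroup, it is a finite combination of generators coming from globally liftable orientations. Using the moving lemma \cite{DK} to put each representative into general position --- replacing it by a height-$n$ locally complete intersection ideal that is comaximal with $I$, with the singular locus $V(\mathcal J)$, and with the remaining ideals --- and using that the underlying free group is free on the primary components, I would collapse this combination into a single identity $I_1 = I\cap I_2$ with $I + I_2 = A[T]$, in which the orientation $\omega_{I_2}$ and the orientation $\omega_{I_1}$ on $I_1$ induced by $\omega_I$ and $\omega_{I_2}$ are both globally liftable. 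One then has a surjection $\Gamma : A[T]^n \surj I_2$ lifting $\omega_{I_2}$ and a surjection $\Theta : A[T]^n \surj I\cap I_2$ lifting $\omega_{I_1}$, and by construction these agree modulo $I_2$, i.e.\ $\Gamma\otimes A[T]/I_2 = \Theta\otimes A[T]/I_2$. Applying (\ref{subtraction}) with $I_2$ in the role of ``$I$'' and $I$ in the role of ``$I'$'' then produces a surjection $\Psi : A[T]^n \surj I$ with $\Psi\otimes A[T]/I = \omega_I$, the desired lift.

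The main obstacle is the bookkeeping in this last step: one must guarantee that every representative extracted from the vanishing relation $(I,\omega_I)=0$ can be taken to be a height-$n$ locally complete intersection ideal, pairwise comaximal and comaximal with both $I$ and $\mathcal J$, so that the formal identity in the free group faithfully records the ideal decomposition $I_1 = I\cap I_2$ and so that the comaximality and rank hypotheses of (\ref{subtraction}) are met. This is precisely where the hypothesis height $\mathcal J \geq d-n+2$ (in place of regularity of $A$) has to be absorbed, and it is handled by using the moving lemma and the generalized subtraction principle (\ref{subtraction}) in place of their regular-ring counterparts; once these are available the argument is formally identical to \cite[Theorem 4.2]{BRS}.
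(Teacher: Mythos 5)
Your ``if'' direction is fine, but the ``only if'' direction has a genuine gap at its center. The step ``collapse this combination into a single identity $I_1=I\cap I_2$ with $I+I_2=A[T]$, in which $\omega_{I_2}$ and $\omega_{I_1}$ are both globally liftable'' is not mere bookkeeping: it is the entire content of the Bhatwadekar--Sridharan argument, and it requires an \emph{addition} principle for $E^n(A[T])$ (liftability of orientations on comaximal ideals implies liftability on their intersection) together with moving lemmas for height-$n$ ideals of $A[T]$, all valid under the hypothesis $2n\geq d+3$. Since $\dim A[T]=d+1$, this is strictly below the range $2n\geq \dim A[T]+3$ in which \cite[Theorem 4.2]{BRS} and its ingredients are proved, so you cannot simply import them; and the present paper establishes only the subtraction principle (\ref{subtraction}) in this range, not an addition principle or the relation-unwinding machinery. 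You cite only (\ref{subtraction}) and assert the rest (you even flag this step as ``the main obstacle'' without resolving it). A smaller but real imprecision: (\ref{subtraction}) needs the two surjections to agree on the nose modulo $I_2$, not merely to induce the same class of orientation, so an adjustment by an automorphism of $A[T]^n$ has to be arranged.

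The paper's proof takes a different and shorter route, following Das--Sridharan \cite[Theorem 3.1]{DRS}: from $(I,\omega_I)=0$ one gets $(IA(T),\omega_I\otimes A(T))=0$ in $E^n(A(T))$ and $(I(0),\omega_{I(0)})=0$ in $E^n(A)$; since $\dim A(T)=\dim A=d$, the classical \cite[Theorem 4.2]{BRS} applies to both of these dimension-$d$ rings and yields surjective lifts over $A(T)$ and at $T=0$; patching the latter with $\omega_I$ gives a lift $A[T]^n\surj I/I^2T$, and the monic inversion principle (\ref{monic}) --- which your argument never invokes --- then produces the desired surjection $A[T]^n\surj I$. If you want to rescue your direct approach you would first have to prove the addition principle for $E^n(A[T])$ in the range $2n\geq d+3$, and the natural way to do that is again via (\ref{monic}), so the reduction to $A(T)$ is not really avoidable.
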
  
 
\begin{proof}
Follow the proof of Das-Sridharan \cite[Theorem 3.1]{DRS} where it is proved for regular $A$ and use (\ref{monic}) instead of \cite[Theorem 2.11]{DRS}. 
$\hfill \square$
\end{proof}

%The following result is due to \cite[Theorem 2.8]{DRS} for regular $A$.

\begin{proposition}\label{LG}
Let $A$ be a ring of dimension $d$ containing an infinite field $k$ and $n$ be an integer with $2n \geq d+3.$ Assume the singular locus of $Spec(A)$ is a closed set  $V(\mathcal{J})$ with $ht \mathcal{J} \geq d-n+2.$ Let $I\subset A[T]$ be an ideal of height $n$ and $\phi : A[T]^n \surj I/I^2T$ be a surjection. Let $N(I;\phi)$ be the set of all $s \in A$ such that $\phi\otimes A_s[T]$ can be lifted to a surjection $\Phi:A_s[T]^n\surj I_s$. Then $N(I;\phi)$ is an ideal of $A.$
\end{proposition}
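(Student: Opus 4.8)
The plan is to verify directly the two defining closure properties of an ideal of $A$: closure under multiplication by arbitrary elements, and closure under addition. The first is formal and the second carries all the content. Clearly $0\in N(I;\phi)$. If $s\in N(I;\phi)$ and $a\in A$, then $A_{as}=(A_{s})_{a}$ is a localisation of $A_{s}$, so base changing the given surjective lift $A_{s}[T]^{n}\surj I_{s}$ along $A_{s}\to A_{as}$ yields a surjective lift over $A_{as}[T]$; hence $as\in N(I;\phi)$. Thus $N(I;\phi)$ absorbs multiplication and only additivity remains. I also record that $A[T]^{n}=A[T]^{n-1}\oplus A[T]$ has a free rank-one summand and that $n\geq 2$ (since $2n\geq d+3$), so that the subtraction principle (\ref{subtraction}) and (\ref{monic}) will apply below.

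For additivity, let $s_{1},s_{2}\in N(I;\phi)$; I want to show $s_{1}+s_{2}\in N(I;\phi)$, that is, that $\phi$ lifts over $A_{s_{1}+s_{2}}[T]$. The first step is to replace $A$ by $B=A_{s_{1}+s_{2}}$, which preserves all the hypotheses: $\dim B\leq d$ gives $2n\geq d+3\geq \dim B+3$, and the singular locus of $\Spec B$ is $V(\mathcal{J}B)$ with $\hh\mathcal{J}B\geq \hh\mathcal{J}\geq d-n+2\geq \dim B-n+2$. Over $B$ the images of $s_{1},s_{2}$ are comaximal (their sum is a unit), so $\Spec B=D(s_{1})\cup D(s_{2})$; and because $B_{s_{i}}$ is a localisation of $A_{s_{i}}$, the hypotheses $s_{i}\in N(I;\phi)$ furnish surjective lifts $\Phi_{i}:B_{s_{i}}[T]^{n}\surj I_{s_{i}}$ of $\phi$ for $i=1,2$. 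The goal is now to glue $\Phi_{1}$ and $\Phi_{2}$ into one surjective lift $\Phi:B[T]^{n}\surj I$, since such a $\Phi$ is exactly a witness that $s_{1}+s_{2}\in N(I;\phi)$.

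The gluing happens over the overlap $B_{s_{1}s_{2}}[T]$, where $\Phi_{1}$ and $\Phi_{2}$ are two surjections onto $I_{s_{1}s_{2}}$ agreeing modulo $I^{2}T$ (both reduce to $\phi$). Using the subtraction principle (\ref{subtraction}) together with the transitivity of the action of automorphisms isotopic to the identity on such lifts — available because $\dim B_{s_{1}s_{2}}[T]\leq d+1$ and $2n\geq d+3$ put us in the stable range — I would produce an automorphism $\sigma$ of $B_{s_{1}s_{2}}[T]^{n}$ that is isotopic to the identity, satisfies $\Phi_{2}\circ\sigma=\Phi_{1}$, and leaves the reduction modulo $I^{2}T$ undisturbed. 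Quillen's lemma (\ref{Q}), applied to the comaximal pair $s_{1},s_{2}$, then splits $\sigma=\tau_{s_{1}}\circ\theta_{s_{2}}$ with $\theta\in\Aut_{B_{s_{1}}[T]}$ and $\tau\in\Aut_{B_{s_{2}}[T]}$. Replacing $\Phi_{1}$ by $\Phi_{1}\circ\theta^{-1}$ over $B_{s_{1}}[T]$ and $\Phi_{2}$ by $\Phi_{2}\circ\tau$ over $B_{s_{2}}[T]$, the two surjections then agree on the overlap (there $\Phi_{1}\theta^{-1}=\Phi_{2}\tau$) and each still lifts $\phi$; since $D(s_{1})$ and $D(s_{2})$ cover $\Spec B$, they patch to the desired global surjective lift $\Phi:B[T]^{n}\surj I$.

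I expect the main obstacle to be precisely the compatibility claimed in the previous paragraph: arranging the gluing automorphism $\sigma$ to be at once isotopic to the identity (so Quillen's lemma applies) and congruent to $Id$ modulo $I^{2}T$ (so the patched map remains a lift of the prescribed $\phi$, and not merely of some surjection onto $I$). Quillen's splitting alone only guarantees $\theta\equiv Id$ modulo $s_{1}$ and $\tau\equiv Id$ modulo $s_{2}$, which says nothing about the reduction modulo $I^{2}T$; it is the subtraction principle (\ref{subtraction}), in the range $2n\geq d+3$, that allows the local modifications to be chosen inside the subgroup fixing $I/I^{2}T$. As a cleaner alternative for this bookkeeping, one may first pass to $A(T)$ through (\ref{monic}): since $\dim A(T)=d$, the two comaximal local lifts can be glued over $A(T)$ by the classical affine subtraction argument, after which (\ref{monic}) descends the resulting lift from $A(T)$ back to $A[T]$, reducing the $T$-bookkeeping to a problem of the expected dimension $d$.
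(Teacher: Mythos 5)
Your reduction to the case $s_{1}+s_{2}=1$ and the verification that $N(I;\phi)$ absorbs multiplication are fine, but the additivity argument has a genuine gap at exactly the point you flag as "the main obstacle": the existence of an automorphism $\sigma$ of $B_{s_{1}s_{2}}[T]^{n}$ with $\Phi_{2}\circ\sigma=\Phi_{1}$, isotopic to the identity and congruent to $Id$ modulo $I^{2}T$. Neither the stable range condition nor the subtraction principle (\ref{subtraction}) supplies this. Stable-range transitivity concerns the action of $E_{n}$ on unimodular rows, i.e.\ on surjections onto the \emph{unit} ideal; it says nothing about surjections from a free module onto a proper ideal of height $n<\dim B_{s_{1}s_{2}}[T]$, and two surjective lifts of $\phi$ onto $I_{s_{1}s_{2}}$ need not differ by any automorphism of $B_{s_1s_2}[T]^{n}$ at all. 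The transitivity you invoke is essentially the uniqueness/injectivity content of Euler class theory — if it held in this generality the group $E^{n}$ would carry no obstruction — so assuming it begs the question. The same objection applies to your fallback: gluing two comaximal local surjective lifts onto an ideal of height $n$ in the $d$-dimensional ring $A(T)$ is precisely a local--global principle for $E^{n}$, which is not classical and in this paper is \emph{deduced from} Proposition \ref{LG} (see Theorem \ref{LGP}), so invoking it here would be circular.

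The paper avoids direct gluing altogether. It sets $J=I\cap A\cap\mathcal{J}$, uses (\ref{nori}) to replace $A$ by $A_{1+J}$ so that $J\subset\mathcal{J}(A)$ with $\dim A/J\leq n-2$, and then follows Das--Sridharan \cite[Theorem 2.8]{DRS}: a moving lemma produces a global lift $\Theta:A[T]^{n}\surj I\cap I'$ of $\phi$ with $I'$ of height $n$ comaximal with $I$ and with $J^{2}T$; the subtraction principle (\ref{subtraction}), applied over each $A_{s_{i}}$ to $\Phi_{i}$ and $\Theta_{s_{i}}$, converts the local data into local surjections onto the \emph{residual} ideal $I'$ agreeing with $\Theta$ modulo $I'^{2}$. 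Because $I'+J^{2}T=A[T]$ and $J$ lies in the Jacobson radical, $I'(0)=A$, and in that situation a global surjection onto $I'$ with the prescribed reduction can be produced; a final application of the subtraction principle then yields a global surjective lift of $\phi$ onto $I$. If you want to salvage your outline, you must route the comparison of $\Phi_{1}$ and $\Phi_{2}$ through such a residual ideal rather than through an automorphism of the free module.
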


\proof
Let $J= I \cap A \cap \mathcal{J}$ and $B=A_{1+J}.$ Note $J$ is contained in the Jacobson radical $\mathcal J(B)$ of $B$, $ht\, \mathcal J(B) \geq d- n +2 $ and dim $B/\mathcal J(B) \leq n-1.$  Follow the proof of Das-Sridharan \cite[Theorem 2.8]{DRS} where it is proved for regular $A$ and use (\ref{nori}, \ref{subtraction}) instead of \cite[Lemma 2.6, Proposition 2.2]{DRS}.
\qed

%As a application of (\ref{LG}), following the proof of \cite[Theorem 2.11]{DRS}, we get the following result.

\begin{corollary}\label{LGP2}
Let $A$ be a ring of dimension $d$ containing an infinite field $k$ and $n$ be an integer with $2n \geq d+3.$ Assume the singular locus of $Spec(A)$ is a closed set  $V(\mathcal{J})$ with $ht \mathcal{J} \geq d-n+2.$ Let $I\subset A[T]$ be an ideal of height $n$ and $\phi : A[T]^n \surj I/I^2T$ be a surjection.
Assume $\phi\otimes A_{\mathfrak m}[T]$ can be lifted to a surjection from $A_{\mathfrak m}[T]^n \surj IA_{\mathfrak m}[T]$ for all maximal ideals $\mathfrak m$ of $A$. Then $\phi$ can be lifted to a surjection $\Phi: A[T]^n\surj I$. 
\end{corollary}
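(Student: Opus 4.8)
The plan is to deduce this directly from Proposition \ref{LG}. By that result the set $N(I;\phi)$ of all $s\in A$ for which $\phi\otimes A_s[T]$ lifts to a surjection $A_s[T]^n\surj I_s$ is an ideal of $A$. It therefore suffices to show that $N(I;\phi)$ is contained in no maximal ideal of $A$: once this is established, $N(I;\phi)=A$, so $1\in N(I;\phi)$, and taking $s=1$ gives precisely the desired lift $\Phi:A[T]^n\surj I$ of $\phi$. Thus the entire problem is reduced to a local-to-global patching of the ideal $N(I;\phi)$, which is the same mechanism Das-Sridharan use to pass from their analogue of \ref{LG} to a local-global principle.

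Fix a maximal ideal $\mathfrak{m}$ of $A$. By hypothesis there is a surjection $\Phi_{\mathfrak{m}}:A_{\mathfrak{m}}[T]^n\surj IA_{\mathfrak{m}}[T]$ lifting $\phi\otimes A_{\mathfrak{m}}[T]$. I would next descend this lift to a basic open neighbourhood of $\mathfrak{m}$. Writing $A_{\mathfrak{m}}=\varinjlim_{s\notin\mathfrak{m}}A_s$, the map $\Phi_{\mathfrak{m}}$ is determined by the $n$ polynomials $\Phi_{\mathfrak{m}}(e_1),\ldots,\Phi_{\mathfrak{m}}(e_n)\in IA_{\mathfrak{m}}[T]$, each of which involves only finitely many elements of $A_{\mathfrak{m}}$. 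Hence there is an $s\notin\mathfrak{m}$ over which all these polynomials are already defined, yielding a homomorphism $\Phi_s:A_s[T]^n\to I_s$.

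Three conditions must survive this descent, and each is a finitely generated (hence ``open'') condition that holds after localizing at $\mathfrak{m}$: that $\Phi_s$ takes values in $I_s$, which is immediate from the construction; that $\Phi_s$ is surjective onto $I_s$, which, since $I$ is finitely generated, amounts to finitely many chosen generators of $I$ lying in the image of $\Phi_s$; and that $\Phi_s$ agrees with $\phi\otimes A_s[T]$ modulo $(I^2T)_s$, which is again a condition on finitely many coefficients. Enlarging $s$ within the complement of $\mathfrak{m}$ finitely many times secures all three simultaneously. Then $s\in N(I;\phi)$ with $s\notin\mathfrak{m}$, so $N(I;\phi)\not\subset\mathfrak{m}$, and since $\mathfrak{m}$ was arbitrary we conclude $N(I;\phi)=A$ as required.

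The genuine content is carried entirely by Proposition \ref{LG} (that $N(I;\phi)$ is closed under addition and scalar multiplication, which rests in turn on the subtraction principle \ref{subtraction} and on \ref{nori}); granting that proposition, the only point here requiring care is the routine descent of a local surjective lift, together with its surjectivity and its compatibility modulo $I^2T$, to a principal localization $A_s$ with $s\notin\mathfrak{m}$. I do not expect any essential difficulty beyond this bookkeeping.
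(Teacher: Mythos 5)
Your proposal is correct and is essentially the same argument the paper intends: the paper's proof simply cites Das--Sridharan's Theorem 2.11 together with Proposition \ref{LG}, and that argument is exactly the one you give --- $N(I;\phi)$ is an ideal, the hypothesis plus a routine direct-limit descent shows it lies in no maximal ideal, hence $1\in N(I;\phi)$. The descent bookkeeping you spell out (finitely many coefficients, finitely many generators of $I$ in the image, agreement modulo $I^2T$) is the standard and correct way to fill in that step.
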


\begin{proof}
Following the proof of Das-Sridharan \cite[Theorem 2.11]{DRS} where it is proved for regular $A$ and use  (\ref{LG}).
$\hfill \square$
\end{proof}

%The following local-global principle for Euler class group is proved in \cite[Theorem 3.9]{DRS} for regular $A$.
\section{Local global principle for Euler class group - Proof of Theorem \ref{aaa}(1,2)}

\begin{theorem}\label{LGP}
Let $A$ be a ring of dimension $d$ containing an infinite field $k$ and $n$ be an integer with $2n \geq d+3.$ Assume the singular locus of $Spec(A)$ is a closed set  $V(\mathcal{J})$ with $ht \mathcal{J} \geq d-n+2.$ Then we have the following exact sequence of groups
$$0 \rightarrow E^n(A) \rightarrow E^n(A[T]) \rightarrow \prod_\mathfrak{m} E^n(A_\mathfrak{m}[T]) $$
where the product runs over all maximal ideals $\mathfrak{m}$ of $A.$
\end{theorem}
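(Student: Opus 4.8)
The plan is to prove the two assertions encoded in the exact sequence separately: that the map $j : E^n(A) \to E^n(A[T])$ induced by $A \hookrightarrow A[T]$ is injective, and that the kernel of the localization map $L : E^n(A[T]) \to \prod_{\mathfrak m} E^n(A_{\mathfrak m}[T])$ equals the image of $j$. For injectivity I would use the retraction $\mathrm{ev}_0 : E^n(A[T]) \to E^n(A)$ induced by $T \mapsto 0$. Since $\mathrm{ev}_0 \circ j = \mathrm{id}$, the map $j$ is split injective and we obtain a decomposition $E^n(A[T]) = j(E^n(A)) \oplus \ker(\mathrm{ev}_0)$, which organizes the rest of the argument.

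Next I would verify $\mathrm{im}(j) \subseteq \ker(L)$. An element of $\mathrm{im}(j)$ is $(I_0A[T], \omega_0 \otimes A[T])$ with $(I_0,\omega_0) \in E^n(A)$, and its image in $E^n(A_{\mathfrak m}[T])$ is $j_{A_{\mathfrak m}}$ applied to the image of $(I_0,\omega_0)$ in $E^n(A_{\mathfrak m})$. The key observation is that $E^n(R)=0$ for any local ring $R$: if $I\subset R$ has height $n$ and $\omega:(R/I)^n \surj I/I^2$, lift the $\omega$-images to $a_1,\ldots,a_n \in I$, so that $(a_1,\ldots,a_n)+I^2 = I$; since $I \subseteq \mathfrak m_R$ is finitely generated, $I = (a_1,\ldots,a_n) + \mathfrak m_R I$ and Nakayama's lemma forces $(a_1,\ldots,a_n)=I$, whence $\omega$ lifts and $(I,\omega)=0$. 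Thus the image of $(I_0,\omega_0)$ in $E^n(A_{\mathfrak m})$ already vanishes, so its further image in $E^n(A_{\mathfrak m}[T])$ vanishes, giving $L\circ j = 0$.

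For the reverse inclusion $\ker(L)\subseteq \mathrm{im}(j)$, take $x \in \ker(L)$ and set $y = x - j(\mathrm{ev}_0(x))$. Then $\mathrm{ev}_0(y)=0$ and, using $\mathrm{im}(j)\subseteq \ker(L)$, also $L(y)=0$; it therefore suffices to prove $\ker(\mathrm{ev}_0)\cap\ker(L)=0$. Writing $y=(I,\omega_I)$ and using the moving lemma of Das-Keshari to arrange $\mathrm{ht}\,I=n$ and $\mathrm{ht}\,I(0)=n$, the condition $\mathrm{ev}_0(y)=0$ lets me replace $\omega_I$ by a surjection $\phi : A[T]^n \surj I/I^2T$ representing $y$ (the standard identification of $\ker(\mathrm{ev}_0)$ with relative data), while $L(y)=0$ says precisely that $\phi\otimes A_{\mathfrak m}[T]$ lifts to a surjection $A_{\mathfrak m}[T]^n \surj IA_{\mathfrak m}[T]$ for every maximal ideal $\mathfrak m$. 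Now Corollary \ref{LGP2} applies directly and produces a global lift $\Phi : A[T]^n \surj I$ of $\phi$, so $(I,\omega_I)=0$ by the characterization of the zero element of $E^n(A[T])$ established above. Hence $y=0$ and $x=j(\mathrm{ev}_0(x))\in \mathrm{im}(j)$, completing the proof.

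I expect the main obstacle to be the middle step of the last paragraph: passing from the abstract vanishing $\mathrm{ev}_0(y)=0$ together with $L(y)=0$ to a single honest surjection $\phi : A[T]^n \surj I/I^2T$ that both represents $y$ and lifts locally, so that Corollary \ref{LGP2} becomes applicable. Keeping the ideal representatives compatible (height $n$, correct behavior at $T=0$, and the comaximality needed to invoke the subtraction principle \ref{subtraction}) while translating between the Euler-class formalism and the lifting-of-surjections formalism is where the care lies; once the hypotheses of \ref{LGP2} are checked the conclusion is immediate. The specialized ingredients \ref{nori}, \ref{monic}, \ref{subtraction}, \ref{LG} and \ref{LGP2} have been arranged so that the regular-case argument of Das-Sridharan transfers verbatim under the hypothesis $\mathrm{ht}\,\mathcal J \geq d-n+2$.
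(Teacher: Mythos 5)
Your proposal is correct and takes essentially the same route as the paper, whose entire proof is an instruction to run Das--Sridharan's local-global argument with Proposition \ref{LG} (equivalently Corollary \ref{LGP2}) replacing their key lemma; your splitting via $\mathrm{ev}_0$, the Nakayama vanishing of $E^n$ of a local ring, and the reduction of $\ker(\mathrm{ev}_0)\cap\ker(L)=0$ to Corollary \ref{LGP2} is exactly that argument. The one point you compress --- that vanishing in $E^n(A_{\mathfrak m}[T])$ gives a local lift of $\phi$ agreeing modulo $I^2T$ and not merely modulo $I^2$ --- is closed by applying Proposition \ref{monic} over $A_{\mathfrak m}$, precisely the care you anticipate in your final paragraph.
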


\begin{proof}
Follow the proof of local global principle of Das-Sridharan \cite[Theorem 3.9]{DRS} where it is proved for regular $A$ and use (\ref{LG}).
\qed
\end{proof}

%The following result generalizes (\ref{LGP2}) and Bhatwadekar-Keshari \cite[Theorem 4.13]{BK}.
%\subsection{Proof of Theorem \ref{aaa} (2)}

\begin{theorem}\label{LGP3}
Let $A$ be a ring of dimension $d$ containing an infinite field $k$ and $n$ be an integer with $2n \geq d+3.$ Assume the singular locus of $Spec(A)$ is a closed set  $V(\mathcal{J})$ with $ht \mathcal{J} \geq d-n+2.$ Let $I\subset A[T]$ be an ideal of height $n$ and $P$ be a projective $A$-module of rank $n$. Let $\phi : P[T] \surj I/I^2T$ be a surjection.
Assume $\phi\otimes A_{\mathfrak m}[T]$ can be lifted to a surjection from $P_{\mathfrak m}[T] \surj IA_{\mathfrak m}[T]$ for all maximal ideals $\mathfrak m$ of $A$. Then $\phi$ can be lifted to a surjection $\Phi: P[T]\surj I$. 
\end{theorem}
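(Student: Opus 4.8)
The plan is to reduce Theorem~\ref{LGP3} to the free case already handled in Corollary~\ref{LGP2} by splitting off a free rank-one summand from $P$ and working modulo the Jacobson radical. First I would set $J = I\cap A\cap \mathcal J$ and pass to the ring $A_{1+J}$. By Lemma~\ref{nori}, the liftability of $\phi$ over $A[T]$ is controlled by liftability over $A_{1+J}[T]$, so it suffices to prove the result after replacing $A$ by $B := A_{1+J}$. The point of this localization is that $J$ becomes contained in the Jacobson radical $\mathcal J(B)$, with $\mathrm{ht}\,\mathcal J(B)\geq d-n+2$ and hence $\dim B/\mathcal J(B)\leq n-2 < n$. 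Since the rank $n$ of $P$ strictly exceeds $\dim B/\mathcal J(B)$, by Plumstead~\cite{Pl} (or Bass) the projective module $P_{1+J}$ has a unimodular element, so we may write $P_{1+J}\iso P_1\oplus B$ with $\mathrm{rank}\,P_1 = n-1$.

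Next I would apply the local-global machinery via the ideal $N(I;\phi)$. The surjection $\phi$ is locally liftable at every maximal ideal $\mathfrak m$ of $A$ by hypothesis; I would check that this local liftability passes to $B = A_{1+J}$ and is preserved under the splitting $P_{1+J}=P_1\oplus B$. The natural tool here is Proposition~\ref{LG} together with Corollary~\ref{LGP2}: once $P$ is split as $P_1\oplus A$, one can try to imitate the free-module argument of \cite[Theorem 2.11]{DRS} but now with a projective module in place of the free module $A[T]^n$. The subtraction principle Corollary~\ref{subtraction} (stated for $P=P_1\oplus A$) and the monic-lifting Proposition~\ref{monic} are exactly the two ingredients needed to run the Das--Sridharan argument in the projective setting. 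Concretely, I would follow the proof of Das--Sridharan \cite[Theorem 2.11]{DRS}, replacing their use of the free local-global statement with Proposition~\ref{LG}, and replacing their subtraction and monic-lifting inputs with Corollary~\ref{subtraction} and Proposition~\ref{monic}, all of which have already been established in the required generality (singular locus $V(\mathcal J)$ with $\mathrm{ht}\,\mathcal J\geq d-n+2$).

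The main obstacle I expect is ensuring that the reduction to $P=P_1\oplus A$ is legitimate and that the local hypotheses survive the localization at $1+J$ and the splitting. In the regular case of \cite{DRS} one freely uses that projective $A_s[T]$-modules are extended; here that holds only for $s$ in the singular-locus-defining ideal, which is precisely why the hypothesis $\mathrm{ht}\,\mathcal J\geq d-n+2$ and the passage to $A_{1+J}$ are needed (cf.\ the proof of Lemma~\ref{nori}). I would therefore take care that every localization used in importing the \cite{DRS} argument is at an element $s\in J$, so that $A_s$ is regular and Popescu~\cite{P} applies to give the extendedness needed for patching. Once the splitting and the extendedness at elements of $J$ are in place, the remaining steps — applying Proposition~\ref{LG} to conclude that global liftability follows from liftability at all maximal ideals — are essentially formal and mirror Corollary~\ref{LGP2}.
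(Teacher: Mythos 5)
Your proposal correctly identifies several of the ingredients the paper also uses (passing to $A_{1+J}$ via Lemma~\ref{nori}, splitting off a unimodular element so that $P=P_1\oplus A$, and the roles of Corollary~\ref{subtraction} and Proposition~\ref{monic}), but it has a genuine gap at the central step. Proposition~\ref{LG} and Corollary~\ref{LGP2} are statements about the \emph{free} module $A[T]^n$; splitting $P$ as $P_1\oplus A$ does not make $P$ free, so you cannot ``apply Proposition~\ref{LG}'' or run the Das--Sridharan argument for $A[T]^n$ with $P_1\oplus A$ in its place without a new argument. The crux of the whole theorem is precisely the additive/patching step: given lifts of $\phi$ over $A_s[T]$ and $A_t[T]$ with $s+t=1$, produce a lift over $A_{s+t}[T]$. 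For a non-free $P$ this is not formal, and your proposal asserts rather than proves it; the phrase ``the remaining steps are essentially formal'' is exactly where the difficulty lives.

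The paper's proof handles this by exploiting the \emph{local freeness} of $P$: it defines $S=\{s\in A:\phi\otimes A_s[T]\ \text{lifts}\}$, observes $S$ is closed under multiplication by ring elements, and, for each maximal ideal $\mathfrak m$, finds $s\notin\mathfrak m$ with both $s\in S$ and $P_s$ free. This yields $s_1,\ldots,s_r\in S$ with $P_{s_i}$ free and $s_1+\cdots+s_r=1$, and by induction reduces everything to the single patching statement: if $s,t\in S$, $s+t=1$ and $P_s$ is \emph{free}, then $1\in S$. That patching is then carried out by following Bhatwadekar--Keshari \cite[Theorem 4.13]{BK} (the projective-module local--global argument, not the free-module argument of \cite{DRS}), with Lemma~\ref{nori}, Lemma~\ref{4.7}, Corollary~\ref{subtraction} and Proposition~\ref{monic} substituted at the points where regularity was used. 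To repair your proposal you would need to add the covering argument by elements where $P$ is free and replace the appeal to Proposition~\ref{LG}/\cite[Theorem 2.11]{DRS} by the adaptation of \cite[Theorem 4.13]{BK}.
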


\begin{proof}
Let $S$ be the set of all $s\in A$ such that $\phi\otimes A_s[T]$ can be lifted to a surjection $\Phi:P_s[T]\surj I_s$. Our aim is to show that $1\in S$. If $t\in S$ and $a\in A$, then $at\in S$. For every maximal ideal $\mathfrak k$ of $A$, $\phi\otimes A_{\mathfrak m}[T]$ has a surjective lift. Thus there exist $s\in A-\mathfrak m$ such that $P_s$ is free and $s\in S$. Thus we can find $s_1,\ldots,s_r\in S$ such that $P_{s_i}$ is free and $s_1+\ldots+s_r=1$. Therefore, using induction, it is enough to show that if $s,t\in S$ and $P_s$ is free, then $s+t\in S$. Replacing $A$ by $A_{x+y}$, $x$ by $x/(x+y)$ and $y$ by $y/(x+y)$, we may assume $s+t=1$. We will follow the proof of Bhatwadekar-Keshari \cite[Theorem 4.13]{BK} and indicate only the necessary changes. 

In step 1, if $J=I\cap A\cap \mathcal J$, then by (\ref{nori}), we may replace $A$ by $A_{1+J}$ and assume that $J$ is contained in the Jacobson radical of $A$. Note $ht\,J\geq d-n+2$, thus $dim(A/J)\leq n-2$. Rest of the arguments of step 1 is same, just use (\ref{4.7}).
In step 2, just replace \cite[Corollary 4.11]{BK} with the subtraction principle (\ref{subtraction}). In step 3, the arguments are same. In step 4, use (\ref{subtraction}, \ref{nori}, \ref{monic}) to complete the proof.
$\hfill \square$
\end{proof}

\begin{remark}
The following example is due to Bhatwadekar-Mohan Kumar-Srinivas \cite[Example 6.4]{BR1}.
Let $B=\BC[X,Y,Z,W]/(X^5+Y^5+Z^5+W^5]$. Then $B$ has an isolated singularity at the origin. Thus singular locus of $Spec(B)$ is defined by the maximal ideal $\mathcal J=\mathfrak m=(x,y,z,w)$. Here $dim(B)=3=ht\,\mathcal J$. There exist $a\in B-\mathfrak m$, $A=B_a$, an ideal $I\subset A[T]$ of height $3$ 
and a surjection $\phi:A[T]^3 \surj I/I^2T$ which does not has a surjective lift from $A[T]^3\surj I$.
Infact $\phi \otimes A_{\mathfrak m}[T]$ does not has a surjective lift $:A_{\mathfrak m}[T]^3 \surj IA_{\mathfrak m}[T]$. Note that if $\mathfrak n \neq \mathfrak m$ is another maximal ideal of $A$, then 
$\phi \otimes A_{\mathfrak n}[T]$ does has a surjective lift $:A_{\mathfrak n}[T]^3 \surj IA_{\mathfrak n}[T]$, by \cite[Theorem 4.13]{BK} as $A_{\mathfrak n}$ is regular. 
Our result (\ref{LGP3}) shows that lifting $\phi$ locally is precisely the obstruction for lifting $\phi$ globally.
$\hfill \square$
\end{remark}

\begin{remark}
 (Segre class) Let $A$ be a ring of dimension $d$ containing an infinite field $k$ and $n$ be an integer with $2n \geq d+3.$ Assume the singular locus of $Spec(A)$ is a closed set  $V(\mathcal{J})$ with $ht\,\mathcal{J} \geq d-n+2.$ Following the proofs of Das-Keshari \cite[Section 4]{DK}, where they are proved for regular $A$, we
can prove the following results.
\begin{enumerate}
\item Let $I \subset A[T]$
be an ideal such that $\mu(I/I^2) = n$ and $n+{\rm ht} I = {\rm 
dim} A[T]+ 2.$ Let $\omega_I : (A[T]/I)^n \surj I/I^2$ be a surjection. Following \cite[section 4]{DK}, we can define the 
$n^{th}$ Segre class $s^n(I, \omega_I )$ of $(I,\omega_I)$ as an element of $ E^n(A[T])$. 

\item Let $\omega_I : (A[T]/I)^n \surj I/I^2$
be a surjection, where $n\ge {\rm dim} A - {\rm ht} I + 3$. If $s^n (I, \omega_I ) = 0$ in $E^n(A[T])$, then  $\omega_I$ can be lifted to a surjection $\Theta : A[T]^n \surj I.$
\end{enumerate}
\end{remark}

\section{Euler class of stably free module - Proof of Theorem \ref{aaa}(3,4)}

Let $R$ be regular ring of dimension $d$ containing a field $k$ and $P$ be a  stably free $R$-module of rank $n$  with $P\oplus R=R^{n+1}$, where $2n\geq d+3$. Bhatwadekar-Sridharan \cite[Theorem 5.4]{BRS} 
associated an element $e(P)\in E^n(R)$ and proved that $Um(P)\neq \gv$ if and only if $e(P)$ is zero in $E^n(R)$. 
We will extend  this result by relaxing the regularity assumption on $R$ by the condition $ht \mathcal J\geq d-n+2$, where $\mathcal{J}$ is the ideal defining the singular locus of $R$. Further we will extend this result to arbitrary stably free $R$ and $R[T]$-modules of rank $n.$

%The following result extends \cite[Proposition 5.2]{BRS} where it is proved for regular ring $A$.

\begin{proposition} \label{5.2}
Let $R$ be a ring of dimension $d$ containing a field and  $I\subset R[W]$ be an ideal of height $n$ with $J= I(0)$ a proper ideal of $R$. Let $P$ be a projective $R$-module of rank $n$ and $\alpha (W): P[W] \surj I$ be a surjection. Assume the singular locus of $Spec(R)$ is a closed set $V(\mathcal{J})$ and $P/NP$ is free, where $N = (I \cap R \cap \mathcal{J})^2 $.
Let $p_1,\ldots ,p_n$ be elements of $P$ whose reduction modulo $N$ form a basis of $P/NP$ and let $\alpha(0)(p_i) = a_i\in J$. Then there exists  an ideal $K\subset R$ of height $\geq n$ and comaximal with $N$ such that:
\begin{enumerate}
    \item $I\cap KR[W] = (F_1 (W),\ldots,F_n(W)).$
     \item $F_i (0) - F_i(1) \in K^2$.
      \item $\alpha(W)(p_i) - F_i(W) \in I^2.$
      \item $F_i(0) -a_i \in J^2.$
\end{enumerate}
\end{proposition}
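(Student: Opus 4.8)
The plan is to build the generators $F_i(W)$ by perturbing the images $G_i(W):=\alpha(W)(p_i)$ and to extract the ideal $K$ from a general position argument carried out on the fibre over $W=0$. Since $\ol p_1,\ldots,\ol p_n$ form a basis of the free module $P/NP$ and $\alpha(W)$ is surjective, one has $I=(G_1(W),\ldots,G_n(W))+NI$; in particular the $G_i(W)$ generate $I$ after inverting any element comaximal with $\mathcal N:=I\cap R\cap \mathcal{J}$. Writing $a_i=G_i(0)$, the same holds over the fibre $J=I(0)$, namely $J=(a_1,\ldots,a_n)+NJ$. Note that an ideal $K\subset R$ is comaximal with $N=\mathcal N^2$ if and only if it is comaximal with $\mathcal N$, and since $\mathcal N\subseteq I\cap R\subseteq I$ this forces $KR[W]+I=R[W]$, so that $I\cap KR[W]=I\cdot KR[W]$ throughout.

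First I would produce $K$ from the $W=0$ data. Applying Swan's Bertini theorem \cite{S} to the generators $a_1,\ldots,a_n$ of $J$ (which generate $J$ modulo $\mathcal N$), I would replace each $a_i$ by $b_i=a_i+(\text{an element of }J^2)$ so that $(b_1,\ldots,b_n)=J\cap K$, where $K\subset R$ is an ideal of height $\ge n$ lying in the smooth locus and comaximal with $N$. This uses that the singular locus $V(\mathcal{J})$ is closed and that $\mathcal N\subseteq \mathcal{J}$ together with $\mathcal N\subseteq I\cap R\subseteq J$, so keeping $V(K)$ off $V(\mathcal N)$ simultaneously yields comaximality with $N$ and with $J$. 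The congruence $b_i-a_i\in J^2$ is precisely what will deliver condition $(4)$.

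Next I would interpolate over $R[W]$. Because $\mathcal N\subseteq I$ and $K+\mathcal N=R$, the ideals $I^2$ and $K^2R[W]$ of $R[W]$ are comaximal, so by the Chinese remainder theorem there exist $F_1(W),\ldots,F_n(W)\in R[W]$ with
\[
F_i(W)\equiv G_i(W)\ \ (\mathrm{mod}\ I^2),\qquad F_i(W)\equiv b_i\ \ (\mathrm{mod}\ K^2R[W]).
\]
The first congruence is condition $(3)$, i.e. $\alpha(W)(p_i)-F_i(W)\in I^2$; evaluating it at $W=0$ gives $F_i(0)-a_i\in (I^2)(0)=J^2$, which is condition $(4)$. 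The second congruence, being independent of $W$, gives $F_i(0)\equiv b_i\equiv F_i(1)\ (\mathrm{mod}\ K^2)$, hence $F_i(0)-F_i(1)\in K^2$, which is condition $(2)$.

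The remaining and genuinely hard point is condition $(1)$: that the $F_i$ can be chosen inside the two prescribed residue classes so that $(F_1,\ldots,F_n)$ equals $I\cap KR[W]=I\cdot KR[W]$ exactly, and not merely sits inside it. This is the substance of the moving lemma of Das--Keshari \cite[Lemma 3.1]{DK}: after localizing at an element comaximal with $\mathcal N$ the basis $\ol p_i$ makes the $F_i$ generate $I$ locally by $n$ elements (here the freeness of $P/NP$ is essential), while on $V(K)$ they generate $K$; one then clears the residual factor by a Nakayama/determinant-trick argument combined with general position so that the residual ideal is exactly the \emph{extended} ideal $KR[W]$, of height $\ge n$ and comaximal with $N$. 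I expect this last step---enforcing the two prescribed residues and the exact equality of ideals at once, while keeping the residual ideal extended from $R$ and compatible at $W=0,1$---to be the main obstacle; conditions $(2)$, $(3)$ and $(4)$ then follow formally from comaximality and from evaluation at $W=0$ and $W=1$.
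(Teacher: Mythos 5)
Your proposal leaves the essential content of the proposition unproved. Conditions (2), (3), (4) are soft: once one has an ideal $K$ of height $\geq n$ comaximal with $N$ and elements $b_i$ with $b_i-a_i\in J^2$, the Chinese remainder step you describe does yield the required congruences. The whole difficulty is condition (1), the exact equality $I\cap KR[W]=(F_1(W),\ldots,F_n(W))$, and you explicitly defer it to ``the moving lemma of Das--Keshari'' plus ``a Nakayama/determinant-trick argument combined with general position.'' Neither of these delivers the statement: your $K$ is manufactured entirely from the fibre at $W=0$ (via Swan's Bertini applied to $a_1,\ldots,a_n$), so it controls $J\cap K$ in $R$ but says nothing about the ideal of $R[W]$ generated by the interpolated $F_i(W)$, which will in general be of the form $I\cap KR[W]\cap(\text{extra residual ideal})$. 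Arranging that the residual ideal in $R[W]$ is \emph{extended} from $R$ and agrees with the one produced at $W=0$ is precisely the hard step, and it cannot be done fibrewise.

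The paper's proof is simply to run the argument of Bhatwadekar--Sridharan \cite[Proposition 5.2]{BRS}, which constructs $K$ and the $F_i$ simultaneously over $R[W]$ by a patching argument; the one place where \cite{BRS} uses regularity of $R$ is to know that projective $R_{a(1+a)}[W]$-modules are extended from $R_{a(1+a)}$ for $a$ in the relevant ideal. The hypotheses here are tailored exactly so that this survives: since $N=(I\cap R\cap\mathcal J)^2\subset\mathcal J$, any $a\in N$ lies in $\mathcal J$, hence $R_a$ is regular (it avoids the singular locus) and contains a field, and Popescu's theorem \cite{P} gives the needed extendedness. Your proposal never identifies this point --- you use $\mathcal J$ only to keep $V(K)$ in the smooth locus --- so even as a plan it misses the mechanism by which the weakened hypothesis replaces regularity. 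To repair the argument you should abandon the fibrewise construction of $K$ and instead verify that each step of \cite[Proposition 5.2]{BRS} goes through with regularity replaced by the observation above.
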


\begin{proof}
Follow the proof of Bhatwadekar-Sridharan \cite[Proposition 5.2]{BRS} where it is proved for regular ring $R$. Note that $a\in N\subset \mathcal J$, hence $R_{a}$ is a regular ring containing a field. Hence by Popescu \cite{P}, projective modules over $R_{a(1+a)}$ are extended from $R_{a(1+a)}$. This is the only place where regularity hypothesis was used in \cite{BRS}.
$\hfill\square$
\end{proof}

\subsection{Euler class of stably free $R[T]$-module $P$}
Let $R$ be a ring of dimension $d$ containing a field $k$, $\CR =R[T]$ and $n$ be an integer with $2n \geq d+3.$ Assume the singular locus of $Spec(R)$ is a closed set  $V(\mathcal{J})$ with $ht\,\mathcal{J} \geq d-n+2.$ 
Let $P$ be a stably free $\CR$-module of rank $n$.
We will define the Euler class $e(P)\in E^n(\CR)$ of $P$ and prove that $Um(P)\neq \gv$ if and only if $e(P)=0$ in $E^n(\CR)$.

Let $r \geq 1$ and $Um_{r,n+r} (\CR)$ be the set of all $r \times (n+r)$ matrices $\sigma$ in $M_{r,n+r}(\CR)$ which  has a right inverse, i.e.  there exists $\tau \in M_{n+r,r}(\CR)$ such that $\sigma\circ \tau=Id_r$.
For any $\sigma \in Um_{r,n+r}(\CR),$ we have an exact sequence 
$$ 0 \rightarrow \CR^r \rightarrow^\sigma\, \CR^{n+r} \rightarrow P \rightarrow 0 $$ 
where $\sigma(v) = v\sigma$ for $v \in \CR^{r}$ and $P$ is a stably free $\CR$-module of rank $n.$ Hence every element of $Um_{r,n+r} (\CR)$ corresponds to a stably free $\CR$-module of rank $n$ and conversely, any stably free $\CR$-module $P$ of rank $n$ will give rise to an element of $Um_{r,n+r}(\CR)$ for some $r.$ We will define a map 
$$e:Um_{r,n+r}(\CR) \rightarrow E^n(\CR)$$ which is a natural generalization of the map $Um_{n+1}(\CR) \rightarrow E^n (\CR)$ defined in \cite{BRS}.
Let $\sigma$  be an element of $Um_{r,n+r}(\CR),$ then 
\[
\sigma=
  \begin{bmatrix}
    a_{1,1} & \cdots & a_{1,n+r} \\
    \vdots  &  \cdots & \vdots \\
    a_{r,1} & \cdots & a_{r,n+r} \\
  \end{bmatrix}
\]
Let $e_1,\ldots,e_{n+r}$ be the standard basis of $\CR^{n+r}$ and let 
$$ P = \CR^{n+r}/\left(\sum^{n+r} _{i=1} a_{1,i}e_i,\ldots, \sum _{i=1} ^{n+r} a_{r,i}e_i \right)\CR. $$
Let  $p_1,\ldots,p_{n+r}$  be the images of  $e_1,\ldots,e_{n+r}$ respectively in $P.$ Then 
$$ P = \sum _{i=1} ^{n+r} \CR p_i  ~\mbox{ with relations } ~\sum_{i=1}^{n+r} a_{1,i}p_i = 0 , \ldots ,\sum_{i=1}^{n+r} a_{r,i}p_i = 0 .$$
To the triple $(P,(p_1,\cdots,p_{n+r}),\sigma),$ we associate an element $e(P,(p_1,\cdots,p_{n+r}),\sigma)$ of $E^n(\CR)$ as follows:
Let $\lambda :P \surj J$ be a generic surjection, i.e. $J \subset \CR$ is an ideal of height $n.$ Since $P \oplus \CR^r = \CR^{n+r}$ and dim $\CR/J = d+1-n \leq n-2$, we get $P/JP$ is a free $\CR/J $ module of rank $n$, by Bass \cite{Ba}. Since $J/J^2$ is surjective image of $P/JP,$ $J/J^2$ is generated by $n$ elements.

Let ``bar" denote reduction modulo $J$. By Bass \cite{Ba} , there exist $\Theta \in E_{n+r}(\ol {\CR})$ such that $$[\ol{a_{1,1}} ,\ldots ,\ol{a_{1,n+r}}]\Theta = [1,0,\ldots,0]$$ i.e.
the first row of $\Theta ^{-1}$ is $[\ol{a_{1,1}} ,\ldots ,\ol{a_{1,n+r}}]$. Let $\ol{\sigma}\circ\Theta$ be given by 
\[
\ol \sigma\circ \Theta=
  \begin{bmatrix}
    1       & 0 &0 &0   \\
   \ol{b_{2,1}} & \ol{b_{2,2}} &\cdots & \ol{b_{2,n+r}} \\
   \vdots  &  \vdots &  \vdots & \vdots \\
   \ol{b_{r,1}} & \ol{b_{r,2}} &\cdots & \ol{b_{r,n+r}} 
  \end{bmatrix}.
\]
Note that $[\ol{b_{2,2}} ,\ldots , \ol{b_{2,n+r}}] \in Um_{n+r-1} (\ol {\CR}).$ By Bass \cite{Ba},  there exist $\Theta_1 \in E_{n+r-1}(\ol {\CR})$ such that $$[\ol{b_{2,2}} ,\ldots ,\ol{b_{2,n+r}}]\Theta_1 = [1,0,\ldots,0].$$ If $\Phi \in E_m(\CR)$, then  $
\begin{bmatrix}
Id_t &0 \\
0 &\Phi \\
\end{bmatrix}
\in E_{m+t}(\CR)$.
Let
\[
\ol{\sigma}\circ \Theta\circ \Theta_1 =
 \begin{bmatrix}
    1       & 0 &0 &0   \\
   \ol{b_{2,1}} & 1&\cdots & 0\\
    \ol{b_{3,1}}& \ol{c_{3,2}} &\cdots & \ol{c_{3,n+r}} \\
   \vdots  &  \vdots &  \vdots & \vdots \\
   \ol{b_{r,1}} & \ol{c_{r,2}} &\cdots & \ol{c_{r,n+r}} \\
  \end{bmatrix}
\]
Counting in this way, we get $\tilde{\Theta} \in E_{n+r} (\ol {\CR})$ such that 
\[
\ol{\sigma}\circ \tilde{\Theta} =
 \begin{bmatrix}
    1       & 0 &0 &0 &\cdots &0   \\
   \ol{b_{2,1}} & 1 &0 &0  &\cdots & 0\\
    \ol{b_{3,1}}& \ol{c_{3,2}} &1 &0 &\cdots & 0 \\
   \vdots  &  \vdots &  \vdots & \vdots \\
   \ol{b_{r,1}} & \ol{c_{r,2}}&\cdots & \ol{d_{r,r-1}} &1 &\cdots & 0 \\
  \end{bmatrix}.
\]
We can find elementary matrix $\Psi \in E_{n+r} (\ol { \CR})$ such that 
$$\ol{\sigma}\circ \tilde{\Theta} \circ \Psi = [Id_r,\underline{0}]$$
 where $\underline{0}$ is $r \times n$ zero matrix. 
Let $\Delta = (\tilde{\Theta}\circ \Psi)^{-1} \in E_{n+r}(\ol {\CR}),$ then $\ol{\sigma}$ is the first $r$ rows of $\Delta$, i.e, $\ol{\sigma}$ can be completed to an elementary matrix $\Delta.$ Since
$$  \sum_{i=1}^{n+r} a_{1,i}p_i = 0 , \ldots ,\sum_{i=1}^{n+r} a_{r,i}p_i = 0 .$$ we get $$ \Delta [\ol{p_1},\ldots,\ol{p_{n+r}}]^{t} = [0,\ldots,0,\ol{q_1},\ldots,\ol{q_n}]^t, $$ where $t$ stands for transpose.
Thus $(\ol{q_1},\ldots,\ol{q_n})$ is a basis of the free module $P/JP.$ 

Let $\omega_J : (\CR/J)^n \surj J/J^2$
 be the surjection given by the set of generators of $\ol{\lambda(q_1)},\cdots,\ol{\lambda(q_n)}$ of $J/J^2.$
We define $$e(P,(p_1,\cdots,p_{n+r}),\sigma) = (J,\omega_J) \in E^n(\CR).$$ 
We need to show that $e(P,(p_1,\cdots,p_{n+r}),\sigma)$ is independent of the choice of the elementary completion of $\ol{\sigma}$ and the choice of the generic surjection $\lambda.$

We begin with the following result which shows that  $e(P,(p_1,\cdots,p_{n+r}),\sigma)$ is independent of the choice of the elementary completion $\ol{\sigma}$.

\begin{lemma} \label{2.1}
Suppose $\Gamma \in E_{n+r}(\ol {\CR})$ is chosen so that  its first $r$ rows are $\ol{\sigma}$. Let  $ \Gamma [\ol{p_1},\ldots,\ol{p_{n+r}}]^{t} = [0,\ldots,0,\ol{q_1'},\ldots,\ol{q_n'}]^t.$ Then  there exist $\Psi \in E_n(\ol {\CR})$ such that $\Psi[\ol{q_1},\ldots,\ol{q_n}]^t = [\ol{q_1'},\ldots,\ol{q_n'}].$
\end{lemma}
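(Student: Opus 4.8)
The plan is to compare the two elementary completions $\Gamma$ and $\Delta$ directly, extract a block structure, and then push the comparison from size $n+r$ down to size $n$. Here $\Delta\in E_{n+r}(\ol{\CR})$ denotes the completion of $\ol\sigma$ used in the construction, so that its first $r$ rows are $\ol\sigma$ and $\Delta[\ol{p_1},\ldots,\ol{p_{n+r}}]^t=[0,\ldots,0,\ol{q_1},\ldots,\ol{q_n}]^t$. Set $M=\Gamma\Delta^{-1}\in E_{n+r}(\ol{\CR})$. Writing $[I_r,0]$ for the $r\times(n+r)$ matrix picking out the first $r$ rows, the hypothesis that $\Gamma$ and $\Delta$ share the first $r$ rows reads $[I_r,0]\Gamma=[I_r,0]\Delta=\ol\sigma$. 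From $M\Delta=\Gamma$ we get $[I_r,0]M\Delta=[I_r,0]\Gamma=[I_r,0]\Delta$, and cancelling the invertible $\Delta$ yields $[I_r,0]M=[I_r,0]$. Thus the first $r$ rows of $M$ are $[I_r,0]$, so $M=\begin{pmatrix} I_r & 0\\ C & D\end{pmatrix}$ with $C$ of size $n\times r$ and $D$ of size $n\times n$.

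Next I would read off the consequence for the two bases. Applying $M$ to $\Delta[\ol{p_1},\ldots,\ol{p_{n+r}}]^t=[0,\ldots,0,\ol{q_1},\ldots,\ol{q_n}]^t$ and using $M\Delta=\Gamma$ gives $[0,\ldots,0,\ol{q_1'},\ldots,\ol{q_n'}]^t=M[0,\ldots,0,\ol{q_1},\ldots,\ol{q_n}]^t$. By the block form of $M$ the first $r$ entries stay zero automatically and the last $n$ entries give $D[\ol{q_1},\ldots,\ol{q_n}]^t=[\ol{q_1'},\ldots,\ol{q_n'}]^t$. Moreover $\det M=1$ (as $M\in E_{n+r}(\ol{\CR})$) and $M$ is block lower triangular, so $\det D=1$, i.e. $D\in\SL_n(\ol{\CR})$. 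Hence $\Psi:=D$ already transforms the $\ol{q_i}$ into the $\ol{q_i'}$, and the only thing left is to arrange $D\in E_n(\ol{\CR})$ rather than merely in $\SL_n(\ol{\CR})$.

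To that end I would first reduce $M$ to block-diagonal form: using the identity block in the top-left corner, each entry $C_{ij}$ can be cleared by left multiplication by the elementary matrix $E_{r+i,j}(-C_{ij})\in E_{n+r}(\ol{\CR})$; the rows being subtracted are $[e_j,0]$, which vanish in the last $n$ columns, so $D$ is untouched. This shows $\diag(I_r,D)\in E_{n+r}(\ol{\CR})$. The main obstacle is the descent from $\diag(I_r,D)\in E_{n+r}(\ol{\CR})$ to $D\in E_n(\ol{\CR})$: note that $E_n=\SL_n$ need not hold on the nose, so one cannot argue pointwise. What saves the argument is the dimension bound. Since $\dim\ol{\CR}=\dim\CR/J=d+1-n\leq n-2$, we have $n\geq\dim\ol{\CR}+2$, so injective $K_1$-stability (Bass, Vaserstein) applies and the stabilisation map $\SL_n(\ol{\CR})/E_n(\ol{\CR})\to \SL_{n+r}(\ol{\CR})/E_{n+r}(\ol{\CR})$ is injective. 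The class of $D$ maps to the class of $\diag(I_r,D)$, which is trivial because $\diag(I_r,D)\in E_{n+r}(\ol{\CR})$; by injectivity $D\in E_n(\ol{\CR})$. Taking $\Psi=D$ then finishes the proof.
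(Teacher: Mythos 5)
Your proposal is correct and follows essentially the same route as the paper: form $M=\Gamma\Delta^{-1}$, observe its first $r$ rows are $[I_r,\underline 0]$, extract the $n\times n$ block $\Psi=D$ acting on the $\ol{q_i}$, and descend from $E_{n+r}(\ol{\CR})$ to $E_n(\ol{\CR})$ via injective $K_1$-stability using $n\geq \dim\ol{\CR}+2$ (the paper cites Suslin--Vaserstein for this last step). You merely spell out the block decomposition and the clearing of the off-diagonal block, which the paper leaves implicit.
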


\begin{proof}
The Matrix $\Gamma \circ \Delta ^{-1} \in E_{n+r}(\ol {\CR})$ is such that its first $r$ rows are $[Id_r,\underline{0}].$ Therefore, there exists $\Psi \in SL_n(\ol {\CR}) \cap E_{n+r}(\ol {\CR})$ such that $\Psi[\ol{q_1},\ldots,\ol{q_n}]^t = [\ol{q_1'},\ldots,\ol{q_n'}].$ 
Since $n > dim \ol {\CR} +1$, by Suslin-Vaserstein \cite{SuV}, $\Psi \in E_n(\ol {\CR}).$
$\hfill\square$
\end{proof}
\medskip

Let $\tilde{\omega_J} : (\CR/J)^n \surj J/J^2$ be the surjection given by the set of generators $\ol{\lambda(q_1')},\cdots,\ol{\lambda(q_n')}$ of $J/J^2.$
Then, by (\ref{2.1}), $(J,\omega_J) = (J,\tilde{\omega_J})\in E^n (\CR).$ Thus for a given surjection $\lambda :P \surj J,$ the element $e(P,(p_1,\cdots,p_{n+r}),\sigma)$ is independent of the choice of the elementary completion of $\ol{\sigma}.$

Now we have to show that $e(P,(p_1,\cdots,p_{n+r}),\sigma)$ is independent of the choice of the generic surjection $\lambda.$ In other words, we have to show that if 
$\lambda' :P \surj J'$ is another generic surjection, where $J'$ is an ideal of $\CR$ of height $n$ and  $\omega_J' : (\CR/J')^n \surj J'/J'^2$ is a surjection obtained as above by completion of $\sigma$ modulo $J'$ to an element of $E_{n+r}(\CR/J'),$ then  $(J,\omega_J) = (J',\omega_J')\in E^n (\CR).$ 

By Bhatwadekar-Sridharan \cite[Lemma 5.1]{BRS}, there exist an ideal $I \subset \CR[W]$ of height $n$ and a surjection $\alpha(W) : P[W] \surj I$ such that $I(0) =J,\alpha(0) = \lambda $ and $I(1) = J', \alpha(1) = \lambda'.$ Let $N =( I \cap \CR \cap \mathcal{J})^2$. 

Note that $\mathcal J\CR$ is the ideal defining singular locus of $\CR$. Since
 ht$N\geq d-n+2 $ and  dim $\CR/N \leq d+1-(d-n+2)\leq n-1 $, by Bass \cite{Ba}, $P/NP$ is free.
 
 Using (\ref{5.2}), rest of the proof of well definedness of $e(P,(p_1,\cdots,p_{n+r}),\sigma)$ is same as in  \cite{BRS}. We denote  the element $e(P,(p_1,\cdots,p_{n+r}),\sigma)$ of  $E^n(\CR)$ by $e(P)$ or $e(\sigma).$
Following the above arguments for a stably free $R$-module $Q$, we get a well defined element $e(Q)$ of $E^n(R)$. 
Therefore, we have proved the following result.

%\subsection{Proof of Theorem \ref{aaa} (3)}

\begin{theorem}\label{eP}
Let $R$ be a ring of dimension $d$ containing a field $k$ and $n$ be an integer with $2n \geq d+3.$ Assume the singular locus of $Spec(R)$ is a closed set  $V(\mathcal{J})$ with $ht\,\mathcal{J} \geq d-n+2.$ 
Then we have  well defined maps 
$$e:Um_{r,n+r}(R) \to E^n(R), \hspace*{.5in}
e:Um_{r,n+r}(R[T]) \to E^n(R[T])$$ 
In particular, given stably free $R$ (resp. $R[T]$) module $Q$ (resp. $P$) of rank $n$, we can associate an element $e(Q) \in E^n(R)$ (resp. $e(P)\in E^n(R[T])$).
\end{theorem}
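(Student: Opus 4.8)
The plan is to imitate the construction of the Euler class of a stably free module carried out by Bhatwadekar--Sridharan in the regular case \cite{BRS}, and to check that the single point where their argument uses regularity can be replaced by the hypothesis ht$\,\mathcal{J}\geq d-n+2$ via Popescu's theorem. Fix $\sigma\in Um_{r,n+r}(\CR)$ and the associated stably free $\CR$-module $P=\sum \CR p_i$ with relations given by the rows of $\sigma$. First I would choose a generic surjection $\lambda:P\surj J$ onto an ideal $J\subset\CR$ of height $n$. Since $\dim \CR/J=d+1-n\leq n-2$, Bass' theorem makes $P/JP$ free of rank $n$, so $J/J^2$ is $n$-generated and, reducing modulo $J$, the matrix $\ol\sigma$ can be reduced by elementary row operations to $[Id_r,\underline{0}]$. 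Completing $\ol\sigma$ to $\Delta\in E_{n+r}(\ol\CR)$ and applying $\Delta$ to $[\ol{p_1},\ldots,\ol{p_{n+r}}]^t$ produces an ordered basis $(\ol{q_1},\ldots,\ol{q_n})$ of $P/JP$, hence a surjection $\omega_J:(\CR/J)^n\surj J/J^2$; I then set $e(\sigma)=(J,\omega_J)\in E^n(\CR)$.

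Two well-definedness checks remain. The first, independence of the elementary completion of $\ol\sigma$, is exactly (\ref{2.1}): two completions differ by a matrix whose first $r$ rows are $[Id_r,\underline{0}]$, so the resulting bases of $P/JP$ differ by a matrix in $SL_n(\ol\CR)\cap E_{n+r}(\ol\CR)$, which lands in $E_n(\ol\CR)$ because $n>\dim\ol\CR+1$ under $2n\geq d+3$; hence the two pairs coincide in $E^n(\CR)$. The second, independence of the generic surjection $\lambda$, is the substantive step. Given a second generic surjection $\lambda':P\surj J'$, I would invoke \cite[Lemma 5.1]{BRS} to obtain an ideal $I\subset\CR[W]$ of height $n$ together with a surjection $\alpha(W):P[W]\surj I$ interpolating the two data, i.e. $I(0)=J$, $\alpha(0)=\lambda$, $I(1)=J'$, $\alpha(1)=\lambda'$, and then transport one Euler datum to the other along the parameter $W$.

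The hard part will be this interpolation argument, since in \cite{BRS} it relies on splitting off a free summand of $P$ after localizing, and that is precisely where regularity is used. I would set $N=(I\cap\CR\cap\mathcal{J})^2$; because ht$\,N\geq d-n+2$ and $\dim\CR/N\leq d+1-(d-n+2)=n-1$, Bass' theorem again gives that $P/NP$ is free, and (\ref{5.2}) then supplies an ideal $K\subset\CR$ comaximal with $N$ and generators $F_1(W),\ldots,F_n(W)$ of $I\cap K\CR[W]$ satisfying the matching conditions $F_i(0)-F_i(1)\in K^2$, $\alpha(W)(p_i)-F_i(W)\in I^2$, and $F_i(0)-a_i\in J^2$. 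The crux of (\ref{5.2}) is that for $a\in N\subset\mathcal{J}$ the ring $R_a$ is regular, so projective $R_{a(1+a)}$-modules are extended by Popescu \cite{P}; this is exactly the replacement for the regularity hypothesis in \cite{BRS}. With (\ref{5.2}) available, the comparison of the Euler data at $W=0$ and $W=1$, yielding $(J,\omega_J)=(J',\omega_{J'})$ in $E^n(\CR)$, proceeds verbatim as in \cite{BRS}.

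Finally, running the identical construction over $R$ in place of $\CR=R[T]$ gives the map $e:Um_{r,n+r}(R)\to E^n(R)$; here the relevant dimension bounds ($\dim R/J\leq n-2$ and $\dim R/N\leq n-2$) hold a fortiori under $2n\geq d+3$, so no further argument is needed. Associating to a stably free module of rank $n$ the class $e(\sigma)$ of any presentation $\sigma\in Um_{r,n+r}$ that exhibits it then yields the well-defined elements $e(Q)\in E^n(R)$ and $e(P)\in E^n(R[T])$ of the statement.
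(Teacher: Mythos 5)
Your proposal is correct and follows essentially the same route as the paper: the same generic-surjection construction with Bass' theorem, Lemma \ref{2.1} for independence of the elementary completion, and the interpolation via \cite[Lemma 5.1]{BRS} together with Proposition \ref{5.2}, where Popescu's theorem applied to $R_a$ for $a\in N\subset\mathcal{J}$ replaces the regularity hypothesis of \cite{BRS}. No substantive differences.
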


%\subsection{Proof of Theorem \ref{aaa} (4)}

\begin{theorem} \label{main61}
Let $R$ be a ring of dimension $d$ containing a field $k$ and $n$ be an integer with
 $2n \geq d+3$. Assume
the singular locus of $Spec(R)$ is a closed set $V(\mathcal J)$ with ht$\mathcal J \geq d-n+2$. Let $Q$ (resp. $P$) be stably free $R$ (resp. $R[T]$)-modules of rank $n$. Then 
\begin{enumerate}
\item $Um(Q)\neq \gv$ if and only if $e(Q) = 0 \in E^n(R)$.
\item Assume $k$ is infinite. Then $Um(P)\neq \gv$ if and only if $e(P) = 0 \in E^n(R[T])$.
\end{enumerate}
\end{theorem}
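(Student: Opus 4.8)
The plan is to connect the vanishing of the Euler class $e(Q)$ (resp. $e(P)$) to the liftability of an associated surjection, reducing the statement to the results already established in the paper. I would first recall that a stably free module $Q$ of rank $n$ with $Q \oplus R^r = R^{n+r}$ corresponds to some $\sigma \in Um_{r,n+r}(R)$, and by Theorem \ref{eP} we have a well-defined element $e(Q) = (J, \omega_J) \in E^n(R)$, where $J \subset R$ is a height-$n$ ideal arising from a generic surjection $\lambda : Q \surj J$ and $\omega_J : (R/J)^n \surj J/J^2$ records the induced basis. The key observation is that $Q$ has a unimodular element if and only if the generic surjection $\lambda$ (equivalently $\omega_J$) lifts to a surjection $R^n \surj J$: indeed, since $Q$ is stably free of rank $n$, producing a unimodular element amounts to splitting off a free summand, which via the quotient construction is exactly a surjective lift of $\omega_J$ to $R^n$.

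For part (1), I would argue as follows. If $e(Q) = (J, \omega_J) = 0$ in $E^n(R)$, then by the definition of the Euler class group \cite[Theorem 4.2]{BRS}, $\omega_J$ lifts to a surjection $R^n \surj J$; this surjective lift, transported back through the correspondence, yields a unimodular element of $Q$, so $Um(Q) \neq \gv$. Conversely, if $Um(Q) \neq \gv$, then $Q \simeq Q' \oplus R$, and choosing the generic surjection compatibly with this splitting shows directly that $\omega_J$ admits a surjective lift, whence $(J, \omega_J) = 0$, i.e. $e(Q) = 0$. The main technical point is to verify that the reduction of $Q/JQ$ to a free module and the completion procedure used to define $\omega_J$ are genuinely compatible with the splitting; this is where Bass cancellation (since $\dim R/J \leq n-2$) and the well-definedness established in Theorem \ref{eP} are invoked.

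For part (2), I would run the identical argument over $\CR = R[T]$, using that $E^n(R[T])$ and the map $e : Um_{r,n+r}(R[T]) \to E^n(R[T])$ are defined in exactly the same way, and that a stably free $R[T]$-module $P$ of rank $n$ gives an element $e(P) = (J, \omega_J) \in E^n(R[T])$ for a height-$n$ ideal $J \subset R[T]$. The vanishing $e(P) = 0$ means $\omega_J : (R[T]/J)^n \surj J/J^2$ lifts to a surjection $R[T]^n \surj J$; the crucial input that makes this work under the weaker hypothesis ht$\,\mathcal J \geq d-n+2$ (rather than regularity) is Proposition \ref{monic}, which guarantees the liftability of surjections $P[T] \surj I/I^2 T$ and underlies the statement that $(J,\omega_J)$ is zero exactly when $\omega_J$ lifts. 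I would cite the analogue of \cite[Theorem 4.2]{BRS} for polynomial rings (which in turn rests on \ref{monic}) to conclude, with the converse direction being the same splitting argument as in part (1).

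The hard part will be the converse direction, namely showing that $Um(P) \neq \gv$ forces $e(P) = 0$: one must show that the Euler class computed from an \emph{arbitrary} generic surjection $\lambda$ vanishes, not merely from one adapted to the given splitting. This is precisely where the well-definedness of $e(P)$ (independence of the choice of generic surjection, proved just before Theorem \ref{eP} using \ref{5.2}) does the essential work, allowing me to replace the given generic surjection by one compatible with a chosen unimodular element without changing the class in $E^n(\CR)$. Here the infiniteness of $k$ is needed to invoke Swan's Bertini-type moving arguments inside \ref{5.2} and \ref{monic}, which explains why part (2) carries the extra hypothesis that $k$ be infinite while part (1) does not.
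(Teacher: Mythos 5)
Your part (1) is essentially the paper's (the paper simply defers to the argument of Bhatwadekar--Sridharan \cite[Theorem 5.4]{BRS}), but your part (2) has a genuine gap. You propose to ``run the identical argument over $\CR=R[T]$,'' treating $R[T]$ as just another ring to which the BRS correspondence applies. This does not work: all the numerical hypotheses in the theorem are relative to $d=\dim R$, while $\dim R[T]=d+1$, so $2n\geq d+3$ only gives $2n\geq \dim R[T]+2$. The ring-theoretic BRS machinery (cancellation, addition/subtraction principles, and the equivalence ``$Q$ has a unimodular element $\iff$ $e(Q)=0$'') is not available over $R[T]$ in this range, and the group $E^n(R[T])$ together with its lifting criteria is built in Sections 7--8 of the paper precisely by exploiting the polynomial structure (surjections onto $I/I^2T$, passage to $R(T)$, Proposition \ref{monic}), not by viewing $R[T]$ as a $(d+1)$-dimensional ring. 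In particular, your forward direction --- ``$e(P)=0$ implies $\omega_J$ lifts to $R[T]^n\surj J$, which transports back to a unimodular element of $P$'' --- has no justification for the transport step over $R[T]$.

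The paper's actual route for the forward direction is the one you never mention and is the key point: from $e(P)=0$ one gets $e(P\otimes R(T))=0$ in $E^n(R(T))$, hence $Um(P\otimes R(T))\neq\gv$ by part (1) applied to the $d$-dimensional ring $R(T)$, and then one \emph{descends} to $Um(P)\neq\gv$ using Theorem \ref{main} (the answer to Roitman's question), since a unimodular element of $P\otimes R(T)$ gives one of $P_f$ for some monic $f$. This descent is the central new ingredient of the whole paper and cannot be replaced by a formal analogy with the ring case. For the converse, the paper also does more than a ``splitting-adapted generic surjection'': writing $e(P)=(I,\omega_I)$, it first uses \cite[Lemma 3.2]{BR1} (this is where the infiniteness of $k$ enters, together with Theorem \ref{main} --- not, as you suggest, inside \ref{5.2}) to arrange $I(0)=R$ or $\mathrm{ht}\,I(0)=n$, then lifts $\omega_I$ to a surjection onto $I/I^2T$ by patching with a lift of $\omega_{I(0)}$ obtained from part (1) applied to $P/TP$, produces a lift over $R(T)$ from part (1) applied to $P\otimes R(T)$, and finally invokes Proposition \ref{monic} to get a surjective lift $R[T]^n\surj I$. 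Your appeal to well-definedness of $e(P)$ is relevant but does not by itself produce the required lift of the given $\omega_I$ modulo $I^2T$.
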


\begin{proof}
(1) Following the proof of Bhatwadekar-Sridharan \cite[Theorem 5.4]{BRS}.

(2) Assume $e(P) = 0$. Then $e(P \otimes R(T)) = 0 $ in $E^n (R(T)).$  By (1), $Um(P\otimes R(T))\neq \gv$. By (\ref{main}), $Um(P)\neq \gv$.

Conversely, assume $Um(P)\neq \gv$. Let $e(P) = (I,\omega_I)$ for some height $n$ ideal $I \subset R[T]$ and $\omega_I : (R[T]/I)^n \surj I/I^2$ a surjection. Since $R$ contains an infinite field, by Bhatwadekar-Sridharan \cite[Lemma 3.2]{BR1}, we may assume that either $I(0) = R$ or $I(0)$ is height $n$ ideal of $R$. 

When $I(0)=R$, $\omega_I$ can be lifted to a surjection $\ol \phi: (R[T]/I)^n\surj I/I^2T$.

When $I(0)$ has height $n$, then using $Um(P/TP)\neq \gv$ and $e(P/TP)=(I(0),\omega_{I(0)})=0$ in $E^n(R)$, by first part of these theorem. Thus by Bhatwadekar-Sridharan \cite[Theorem 4.2]{BRS}, the surjection $\omega_{I(0)}:(R/I(0))^n\surj I(0)/I(0)^2$ can be lifted to a surjection $\phi_1:R^n\surj I(0)$. Patching $\phi_1$ and $\omega_I$, we get a surjection $\ol \phi : (R[T]/I)^n\surj I/I^2T$ which is a lift of $\omega_I$.

Since $Um(P\otimes R(T))\neq \gv$ and $dim\,R(T)=d$, by Bhatwadekar-Sridharan \cite[Theorem 4.2]{BRS}, $\omega_I\otimes R(T)$ can be lifted to a surjection $\Phi: R(T)^n \surj IR(T)$ which is also a lift of $\ol \phi$. By (\ref{monic}), $\ol \phi$ has a surjective lift $\theta:R[T]^n\surj I$ which is also a lift of $\omega_I$. Thus $e(P) = (I,\omega_I)= 0$ in $E^n(R[T])$. 
$\hfill \square$
\end{proof}

\Addresses

\end{document}